\newcommand{\on}{\operatorname}
\newcommand{\kk}{\Bbbk}
\newcommand{\bpr}{\begin{proof}}
\newcommand{\epr}{\end{proof}}
\newcommand{\mc}{\mathcal}
\newcommand{\mf}{\mathfrak}
\newcommand{\mb}{\mathbb}
\newcommand{\wt}{\widetilde}
\newcommand{\cProj}{\operatorname{Proj}}
\newcommand{\rcatmod}{\operatorname{mod-}}
\newcommand{\rcatMod}{\operatorname{Mod-}}
\newcommand{\rQch}{\operatorname{Qcoh}}
\newcommand{\rGr}{\operatorname{Gr-}\hskip -2pt}
\newcommand{\rQgr}{\operatorname{Qgr-}\hskip -2pt}
\newcommand{\rTors}{\operatorname{Tors-}\hskip -2pt}
\newcommand{\dirlim}{\underrightarrow{\lim}}
\newcommand{\wh}{\widehat}
\newcommand{\Qcoh}{\operatorname{Qcoh}}
\newcommand{\beq}{\begin{equation}}
\newcommand{\eeq}{\end{equation}}
\newcommand{\Hom}{{\rm Hom}}
\newcommand{\End}{{\rm End}}
\newcommand{\Ext}{{\rm Ext}}
\numberwithin{equation}{section}
 \theoremstyle{plain}
\newtheorem{theorem}[equation]{Theorem}
\newtheorem{corollary}[equation]{Corollary}
\newtheorem{proposition}[equation]{Proposition}
\theoremstyle{definition}
\newtheorem{definition}[equation]{Definition}
\newtheorem{hypothesis}[equation]{Hypothesis}
\newtheorem{standing-hypothesis}[equation]{Standing Hypothesis}
\newtheorem{example}[equation]{Example}
\begin{document}

\title{Closed subcategories of quotient categories}

\author{Daniel Rogalski}
\address{University of California, San Diego\\ Department of Mathematics\\ 9500 Gilman Dr. \#0112 \\
La Jolla, CA 92093-0112\\ USA}
\email{drogalski@ucsd.edu}
\date{\today}
\keywords{noncommutative geometry, noncommutative projective scheme, Grothendieck category, quotient category, quasi-scheme, weakly closed subcategory, closed subcategory}
\subjclass[2010]{
Primary:
18E10,  	
18E35.   
Secondary: 
14A22.   
}

\begin{abstract}
We study the spectrum of closed subcategories in a quasi-scheme, i.e. a Grothendieck category $X$.  The closed subcategories are the direct analogs of closed subschemes in the commutative case, in the sense that when $X$ is the category of quasi-coherent sheaves on a quasi-projective scheme $S$, then the closed subschemes of $S$ correspond bijectively to the closed subcategories of $X$.  Many interesting quasi-schemes, such as the noncommutative projective scheme $\rQgr B = \rGr B/\rTors B$ associated to a graded algebra $B$, arise as quotient categories of simpler abelian categories.  In this paper we will show how to describe the closed subcategories of any quotient category $X/Y$ in terms of closed subcategories of $X$ with special properties, when $X$ is a category with a set of compact projective generators.
\end{abstract}

\maketitle

\section{Introduction}

In commutative algebraic geometry, the closed subschemes $Z \subseteq S$ play an important role in the study of a scheme $S$.  In this paper we study one possible generalization of this idea to noncommutative geometry, following work of Rosenberg, Smith, Van den Bergh, Kanda, and others.  We take as the basic object of noncommutative geometry a Grothendieck category $X$, that is, an abelian category with exact direct limits and a generator.  Strictly speaking, this generalizes not a commutative scheme $S$ but rather its category of quasi-coherent sheaves $\rQch S$.  When taking this point of view, $X$ has also been called a \emph{quasi-scheme} (Rosenberg, Van den Bergh) or a \emph{space} (Smith).  

Let $X$ be a Grothendieck category.  A full subcategory $Z$ of $X$ is \emph{weakly closed} if it is closed under subquotients and direct sums.  The weakly closed subcategory $Z$ is \emph{closed} if it is also closed under products.

Our main goal in this paper is to further develop the theory of closed subcategories, although we will also prove results about weakly closed subcategories along the way.  Closed subcategories are the most direct analogues of closed subschemes in the commutative case.  Indeed, if $S$ is a quasi-projective scheme over a field, then the closed subschemes of $S$ are in bijection with the closed subcategories of $\rQch S$ \cite[Theorem 4.1]{Smi02}.  As further evidence that the definition of closed is reasonable, if $R$ is any unital ring we can consider the category $\rcatMod R$ of right $R$-modules as a noncommutative affine quasi-scheme.  In this case, the closed subcategories of $\rcatMod R$ are precisely those of the form $\rcatMod (R/I)$ for ideals $I$ of $R$, as proved by Rosenberg \cite[Proposition 6.4.1]{Ros95}.  Letzter \cite{Let07} also noted that in any quasi-scheme $X$ with a single generator $O$, the closed subcategories of $X$ correspond bijectively to certain special subobjects of $O$ which by analogy he called ideals, though he did not give a characterization of these subobjects.

Many of the major developments in this kind of noncommutative geometry have concerned analogs of projective schemes.  Suppose that $B = \bigoplus_{n = 0}^{\infty} B_n$ is an $\mb{N}$-graded $\kk$-algebra which is connected $(B_0 = \kk)$ and finitely generated as an algebra by $B_1$.  Let $\rGr B$ be the category of $\mb{Z}$-graded right $B$-modules, and let $\rTors B$ be the subcategory of $\rGr B$ consisting of direct limits of modules which are finite dimensional over $\kk$.  Using a construction of Gabriel, one forms the quotient category $\rQgr B = \rGr B/\rTors B$.  The category $\rQgr B$ was studied by Artin and Zhang \cite{AZ1} and is called a  \emph{noncommutative projective scheme}.  The same idea appeared in work of Verevkin \cite{Ver92}.  In case $B$ is commutative, $\rQgr B \simeq \rQch S$ for the projective scheme $S = \cProj B$, and as already noted the closed subcategories of $\rQch S$ simply correspond to the closed subschemes of $S$.  Moreover, it is standard that these closed subschemes are in bijection with the graded ideals $I$ of $B$ such that $I$ is \emph{saturated}, i.e. $B/I$ has no nonzero submodules in $\rTors B$.  Here, $I$ corresponds to the closed subscheme $\cProj (B/I)$ of $S$, and $\rQgr  \, (B/I)$ is the corresponding closed subcategory of $\rQgr B$.

By contrast, closed subcategories of noncommutative projective schemes $X = \rQgr B$ are not so easy to describe when $B$ is a noncommutative $\mb{N}$-graded $\kk$-algebra.  If $B$ is noetherian and $I$ is a graded ideal, Smith proved that $Z = \rQgr \, (B/I)$ is indeed closed in $\rQgr B$, but the proof is non-trivial \cite[Theorem 1.2]{Smi16}.  Moreover, 
in general $X$ may have many closed subcategories that are not of this form.  For example, if $I$ is a graded right ideal of $B$ such that $\dim_{\kk} (B/I)_n = 1$ for all $n \geq 1$, then $B/I$ is called a \emph{point module}.  Under mild additional hypotheses, the image of the point module in $X = \rQgr B$ is a simple object $p$ and the semisimple subcategory consisting of all direct sums of copies of $p$ forms a closed subcategory of $X$ \cite[Proposition 5.8]{Smi02}, but $I$ is generally not a $2$-sided ideal.  

In this paper we develop a framework which will allow one to better understand the spectrum of closed subcategories of a noncommutative projective scheme $\rQgr B$.  The problem of describing the closed subcategories of $\rQgr B$ naturally breaks into two pieces:  (1) understand the closed subcategories of the category $\rGr B$ of $\mb{Z}$-graded modules over a graded ring $B$; and (2) understand how the closed subcategories of $\rGr B$ and those of its quotient category $\rQgr B = \rGr B/\rTors B$ are related.  We can formulate both of these parts as instances of very general questions.  First, note that a module category $\rcatMod R$ has a single projective generator $R$, while if $B$ is a $\mb{Z}$-graded ring then $\rGr B$ is generated by the family of projective generators $\{ B(n) | n \in \mb{Z} \}$, where $B(n)$ is the graded module with graded pieces $B(n)_i = B_{n+i}$.  So for (1), we can ask if Rosenberg's description of closed subcategories of $\rcatMod R$ in terms of ideals of $R$ has a natural extension to categories with a set of projective generators.  For (2), we can seek to understand the relation between the closed subcategories of a general Grothendieck category $X$ and the closed subcategories of a quotient $X/Y$.  Because these turn out to be non-trivial and interesting problems in their own right, with applications to many examples beyond noncommutative projective schemes, we concentrate on the solution of these general problems here.  We will then apply the theory we develop to the special case of noncommutative projective schemes in a sequel paper \cite{Rog25}.

Problem (1) described in the previous paragraph turns out to have a relatively straightforward solution.  An object $M$ of a Grothendieck category $X$ is called \emph{compact} if the functor $\Hom_X(M, -)$ commutes with arbitrary direct sums.  Compactness is a natural generalization of the notion of finite generation in a module category.  Our first result shows how to describe the closed subcategories of any category with a set of compact projective generators.
\begin{theorem} [Theorem~\ref{thm:weakly-closed}, Corollary~\ref{cor:idealinprojcase}]
\label{thm:main1}
Let $X$ be a Grothendieck category, with small set of compact projective generators $S = \{ O_{\alpha} \}_{\alpha \in A}$.  A collection $\{ J_{\alpha} \}_{\alpha \in A}$, where $J_{\alpha} \subseteq O_{\alpha}$ for all $\alpha$, is called an \emph{ideal} in the set of generators $S$ if whenever $f: O_{\alpha} \to O_{\beta}$ is a morphism in $X$, then $f(J_{\alpha}) \subseteq J_{\beta}$.

Every closed subcategory $Z$ of $X$ is generated by the objects $\{ O_{\alpha}/J_{\alpha} \}$ for a unique ideal $\{ J_{\alpha} \}$ in the set of generators $S$.  Thus, there is a bijective correspondence between closed subcategories in $X$ and ideals in the set of generators $S$. 
\end{theorem}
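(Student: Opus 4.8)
The plan is to exhibit the bijection explicitly and check that it is well defined in both directions. To a closed subcategory $Z$ I will attach the collection $\{J_\alpha^Z\}$, where $J_\alpha^Z \subseteq O_\alpha$ is the smallest subobject with $O_\alpha/J_\alpha^Z \in Z$; to an ideal $\mathcal J = \{J_\alpha\}$ I will attach
\[
Z_{\mathcal J} \;=\; \{\, M \in X \;:\; f(J_\alpha)=0 \text{ for every } \alpha \text{ and every } f \in \Hom_X(O_\alpha,M)\,\}.
\]
The bulk of the work is to verify that $\{J_\alpha^Z\}$ really is an ideal, that $Z_{\mathcal J}$ really is a closed subcategory equal to the one generated by $\{O_\alpha/J_\alpha\}$, and that the two assignments are mutually inverse.

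First I would verify the forward assignment. For a closed subcategory $Z$ and a fixed $\alpha$, the subobjects $N \subseteq O_\alpha$ with $O_\alpha/N \in Z$ form a family closed under arbitrary intersections, since $\bigcap_i N_i = \ker\!\big(O_\alpha \to \prod_i O_\alpha/N_i\big)$ and $Z$ is closed under products and subobjects; as $O_\alpha$ itself lies in the family, it has a least member $J_\alpha^Z$ (legitimately, as $X$ is well-powered). Given a morphism $f\colon O_\alpha \to O_\beta$, the composite $O_\alpha \xrightarrow{f} O_\beta \to O_\beta/J_\beta^Z$ has image a subobject of $O_\beta/J_\beta^Z\in Z$, hence a quotient of $O_\alpha$ lying in $Z$; minimality of $J_\alpha^Z$ forces its kernel to contain $J_\alpha^Z$, i.e. $f(J_\alpha^Z)\subseteq J_\beta^Z$, so $\{J_\alpha^Z\}$ is an ideal. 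Note this direction uses only closure under products and subobjects.

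Next I would treat the backward assignment by proving three claims about $Z_{\mathcal J}$: (i) it is a closed subcategory; (ii) $O_\alpha/J_\alpha \in Z_{\mathcal J}$ for every $\alpha$; (iii) $Z_{\mathcal J}$ is contained in every closed subcategory $W$ with $\{O_\alpha/J_\alpha\} \subseteq W$. For (i), closure under subobjects and products is formal from the definition; closure under quotients uses projectivity of the $O_\alpha$ to lift a map $O_\alpha \to M''$ along an epimorphism $M \to M''$; closure under direct sums uses compactness to write a map $O_\alpha \to \bigoplus_i M_i$ as a finite sum of maps into individual summands, each of which kills $J_\alpha$. Claim (ii) is exactly where the ideal axiom enters: a map $O_\beta \to O_\alpha/J_\alpha$ lifts through the projective $O_\beta$ to some $g\colon O_\beta \to O_\alpha$, and $g(J_\beta)\subseteq J_\alpha$ shows the original map kills $J_\beta$. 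For (iii), given $M \in Z_{\mathcal J}$ choose an epimorphism $\bigoplus_i O_{\alpha_i} \twoheadrightarrow M$ out of a coproduct of generators; each component $O_{\alpha_i}\to M$ kills $J_{\alpha_i}$ by definition of $Z_{\mathcal J}$, hence factors through $O_{\alpha_i}/J_{\alpha_i}$, so $M$ is a quotient of $\bigoplus_i O_{\alpha_i}/J_{\alpha_i}\in W$ and therefore lies in $W$. Claims (i)--(iii) together say $Z_{\mathcal J}$ is precisely the closed subcategory generated by $\{O_\alpha/J_\alpha\}$.

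Finally I would check that the two assignments are inverse. For a closed $Z$: the containment $Z \subseteq Z_{\mathcal J^Z}$ holds because for $M\in Z$ the image of any $O_\alpha\to M$ lies in $Z$, so its kernel contains $J_\alpha^Z$; the reverse containment is immediate from (iii) since $O_\alpha/J_\alpha^Z\in Z$. Hence $Z_{\mathcal J^Z}=Z$. For an ideal $\mathcal J$: (ii) gives $J_\alpha^{Z_{\mathcal J}}\subseteq J_\alpha$, while $O_\alpha/J_\alpha^{Z_{\mathcal J}}\in Z_{\mathcal J}$, applied to the quotient map $O_\alpha\to O_\alpha/J_\alpha^{Z_{\mathcal J}}$, gives $J_\alpha \subseteq J_\alpha^{Z_{\mathcal J}}$; hence $\mathcal J^{Z_{\mathcal J}}=\mathcal J$. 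In particular the ideal attached to a given closed subcategory is unique and every ideal occurs, which is the asserted bijection. There is no deep obstacle here — the argument is a sequence of diagram chases in an abelian category — but one must be vigilant about the legitimacy of the constructions (well-poweredness for the intersection defining $J_\alpha^Z$, the coproduct-of-generators cover) and, above all, about invoking compactness and projectivity in exactly the places indicated: projectivity is what makes the ideal condition equivalent to membership of $O_\alpha/J_\alpha$ in $Z_{\mathcal J}$, and compactness is what makes $Z_{\mathcal J}$ closed under direct sums.
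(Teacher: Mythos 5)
Your proof is correct, but it takes a genuinely different route from the paper. The paper (Theorem~\ref{thm:weakly-closed}, Corollary~\ref{cor:idealinprojcase}) first establishes a bijection between \emph{weakly closed} subcategories and \emph{filter systems} in the generators --- where compactness is used in an essential way to reduce maps out of a generator into an infinite direct sum to a finite-index situation --- and then specializes: a weakly closed subcategory is closed iff its filter system is principal, and a principal filter system is the same data as an ideal. Your argument skips the filter-system intermediary entirely. You attach to a closed $Z$ the \emph{least} element $J_\alpha^Z$ of the family $\{N \subseteq O_\alpha : O_\alpha/N \in Z\}$ (which exists because that family is closed under arbitrary intersections, via products and subobjects, and $X$ is well-powered), and to an ideal $\mathcal J$ the ``annihilator'' subcategory $Z_{\mathcal J}$; you then verify directly that $Z_{\mathcal J}$ is closed (products and subobjects formal; quotients via projectivity), that it contains each $O_\alpha/J_\alpha$ (via projectivity plus the ideal axiom), that it is the smallest such closed subcategory (via a generating epimorphism), and that the two assignments are inverse.

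Two remarks on what each approach buys. First, the paper's detour through filter systems is not wasted effort: the weakly closed classification in Theorem~\ref{thm:weakly-closed}(1)--(3) is used independently later (e.g.\ Proposition~\ref{prop:Y-reduced}), so the author needs it anyway, and it is natural to extract the closed case as a corollary. Your proof, by contrast, is self-contained and noticeably shorter when the only goal is the closed case. Second --- and this is perhaps the most interesting feature of your argument --- your proof never actually invokes compactness of the generators. Closure of $Z_{\mathcal J}$ under direct sums, the one place you mention compactness, already follows from closure under products and subobjects because $\bigoplus M_i \hookrightarrow \prod M_i$ in a Grothendieck category. So your argument establishes the bijection for any Grothendieck category with a small set of \emph{projective} generators, compact or not. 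The paper's route genuinely needs compactness because the weakly-closed/filter-system bijection uses it (to show the subcategory generated by $\{O_\alpha/I_\alpha\}$ is closed under subobjects, one reduces maps out of a generator to finite direct sums), so the hypothesis cannot be dropped there. Your argument shows it can be dropped for the closed case alone.

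One small stylistic point: your verification of closure of $Z_{\mathcal J}$ under direct sums is redundant given the products-and-subobjects argument, as noted above; and the step ``$\bigcap_i N_i = \ker\bigl(O_\alpha \to \prod_i O_\alpha/N_i\bigr)$'' deserves a word --- it is the standard fact that the kernel of a map into a product is the intersection of the kernels of the components --- but it is correct. Everything else checks out.
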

It is easy to see that when $X = \rcatMod R$ and $S = \{ R \}$, an ideal in the set of generators $S$ is precisely an ideal $J$ of $R$.  So Theorem~\ref{thm:main1} directly generalizes Rosenberg's result. In the course of the proof of Theorem~\ref{thm:main1}, we also give an analogous result about the weakly closed subcategories of $X$, showing that they correspond to choices of filters of subobjects inside each generator with certain compatibility conditions (see Theorem~\ref{thm:weakly-closed}). 

The category $X$ in Theorem~\ref{thm:main1} can also be reinterpreted as a category of modules over the $A$-ring $\displaystyle\bigoplus_{\alpha, \beta \in A} \Hom_X(O_{\beta}, O_{\alpha})$, in which case the notion of ideal in a set of generators becomes a literal ideal in the $A$-ring.  This point of view is more important for \cite{Rog25}, so we defer the discussion to that paper.

Next, we describe our main result about closed subcategories of quotients, which solves general problem (2) stated above.  Given a Grothendieck category $X$, recall that a subcategory $Y$ is \emph{localizing} if it is weakly closed and also closed under extensions.  In this case there is a quotient category $X/Y$ together with an exact quotient functor $\pi: X \to X/Y$ which has a right adjoint $\omega: X/Y \to X$.  An object $M$ in $X$ has a unique largest subobject $\tau(M)$ in $Y$, called the $Y$-torsion submodule of $M$.  The object $M$ is called $Y$-torsionfree if $\tau(M) = 0$.
\begin{theorem}[Theorem~\ref{thm:quotientclosed}]
\label{thm:main2}
Let $X$ be a Grothendieck category such that products are exact (Grothendieck's (AB4*) condition).  Let $Y$ be a localizing subcategory of $X$ and consider the quotient category $X/Y$.  A closed subcategory $Z$ of $X$ is called \emph{$Y$-closed} if (i) $Z$ is \emph{$Y$-torsionfree generated}, that is the full subcategory generated by the $Y$-torsionfree objects in $Z$ is all of $Z$; and (ii) $Z$ is \emph{$Y$-essentially stable}: if $M \in Z$ then $\omega \pi(M) \in Z$.
\begin{enumerate}
    \item If $Z$ is a closed subcategory of $X$ which is $Y$-essentially stable, then $\pi(Z) = \{ \pi(M) | M \in Z \}$ is a closed subcategory of $X/Y$. 
    \item If $\mc{Z}$ is a closed subcategory of $X/Y$, then the subcategory $Z$ of $X$ generated by all $Y$-torsionfree $M \in X$ with $\pi(M) \in \mc{Z}$ is a $Y$-closed subcategory of $X$.
    \item There is a bijective correspondence between $Y$-closed subcategories $Z$ of $X$ and closed subcategories $\mc{Z}$ of $X/Y$ given by the inverse operations in (1) and (2).  
\end{enumerate}
\end{theorem}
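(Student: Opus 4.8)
The plan is to establish the three parts in order, using Theorem~\ref{thm:main2}'s preliminary observations (the localization package: the exact quotient functor $\pi$, its right adjoint $\omega$, the torsion radical $\tau$, and the standard facts that $\pi\omega \cong \id$, that $\omega$ is left exact, that $\omega M$ is $Y$-torsionfree for any $M \in X/Y$, and that $\omega\pi M$ is the largest essential extension of $M/\tau(M)$ lying in $X$ with $Y$-torsion quotient — the \emph{$Y$-saturation} of $M$). I would also want to record at the outset that $\pi$ preserves direct sums (it is a left adjoint) and that, under (AB4*), $\pi$ preserves products of $Y$-torsionfree objects — or more precisely that $\omega$ preserves products and $\pi\omega\cong\id$, so products in $X/Y$ are computed by $\pi$ applied to products of the $\omega$-images; this is exactly where exactness of products in $X$ is used, and it is the structural reason the hypothesis (AB4*) appears.

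For part (1): given a $Y$-essentially stable closed subcategory $Z$, I would show $\pi(Z)$ is closed under subquotients, direct sums, and products in $X/Y$. Closure under subquotients: if $N' \hookrightarrow \pi M \twoheadrightarrow N''$ in $X/Y$ with $M \in Z$, pull back along $\pi$ — the subobject $\omega N' \cap \omega\pi M$ sits inside $\omega\pi M \in Z$ (by essential stability), so it lies in $Z$, hence its $\pi$-image, which is $N'$, lies in $\pi(Z)$; for quotients one uses exactness of $\pi$ and that $Z$ is closed under quotients in $X$ directly, since $\pi$ of a quotient of $M$ is a quotient of $\pi M$ and every quotient of $\pi M$ arises this way up to the torsion subtleties (one reduces to quotients $M \to M/K$ and notes $\pi$ is essentially surjective onto subquotients). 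Closure under direct sums is immediate since $\pi$ commutes with $\bigoplus$ and $Z$ is closed under direct sums. Closure under products is the delicate point: for $M_i \in Z$, the product $\prod \pi M_i$ in $X/Y$ equals $\pi(\prod \omega\pi M_i)$; now each $\omega\pi M_i \in Z$ by essential stability, and $Z$ is closed under products in $X$, so $\prod \omega\pi M_i \in Z$, whence $\pi(\prod\omega\pi M_i) \in \pi(Z)$. So $\pi(Z)$ is closed.

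For part (2): given a closed subcategory $\mc{Z}$ of $X/Y$, let $Z$ be generated (as a closed subcategory, i.e.\ close up under subquotients, sums, products) by $\mc{F} = \{ M \in X \ \text{$Y$-torsionfree} : \pi M \in \mc{Z} \}$. I would first check $\mc F$ is already closed under subobjects among torsionfree objects and under direct sums and products (products of torsionfree objects are torsionfree by left-exactness of products under (AB4*); and $\pi$ applied to such a product is the product in $X/Y$ of the $\pi M_i$, which lies in $\mc Z$), so the only genuine closure one must take is under quotients — a quotient of a $Y$-torsionfree object need not be torsionfree. Here is where the definition of $Y$-closed earns its keep: I claim the closed subcategory generated by $\mc F$ is exactly $\{ M \in X : M/\tau(M) \in \mc F \ \text{and}\ \omega\pi M \in \mc F\} $ closed up under quotients once more — more cleanly, I would show that $Z := \{ M \in X : \omega\pi(M) \in Z_0 \ \text{and there is an exact sequence exhibiting $M$ as a subquotient of objects of $\mc F$}\}$ stabilizes after one step. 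The cleanest route: show directly that $Z_0 := \{ M : \text{$M$ is a subquotient in $X$ of a product of objects of } \mc F\}$ is already closed (sums of products of things in $\mc F$ embed in a bigger product in $\mc F$; subquotients of subquotients are subquotients; products of subquotients of products in $\mc F$ are, using (AB4*) exactness, subquotients of a product in $\mc F$), and then verify $Z_0$ is $Y$-torsionfree generated — because every object of $\mc F$ is torsionfree, and a subquotient $A/B$ of $P \in \mc F$ with $A/B$ not torsionfree still has its torsionfree quotient $(A/B)/\tau$ a subquotient of $P$, and $Z_0$ is generated by these torsionfree pieces together with essential stability — and $Y$-essentially stable, because $\omega\pi$ of a subquotient of $P$ embeds into $\omega\pi P = P$ (using that $P$ is torsionfree and $\mc Z$-saturated, $\omega\pi P$ sits between $P$ and... actually $\omega\pi P \supseteq P$ and $\omega\pi P \in \mc F$ iff $\pi(\omega\pi P) = \pi P \in \mc Z$, which holds), so $\omega\pi(A/B) \hookrightarrow \omega\pi P \in \mc F \subseteq Z_0$. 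This shows $Z$ in the statement equals $Z_0$ and is $Y$-closed.

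For part (3): I would show the two operations are mutually inverse. Starting from a $Y$-closed $Z$, form $\pi(Z)$ (closed by (1)), then form the $Y$-closed subcategory $Z'$ generated by $\{ M\ \text{torsionfree} : \pi M \in \pi(Z)\}$; the inclusion $Z' \supseteq$ (torsionfree objects of $Z$) is clear, and since $Z$ is $Y$-torsionfree generated, $Z' \supseteq Z$, while conversely if $M$ is torsionfree with $\pi M \in \pi Z$, say $\pi M \cong \pi M'$ with $M' \in Z$, then $\omega\pi M \cong \omega\pi M' \in Z$ by essential stability, and $M \hookrightarrow \omega\pi M$ (as $M$ is torsionfree), so $M \in Z$; hence $Z' = Z$. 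Starting from a closed $\mc Z$ in $X/Y$, form $Z$ as in (2), then $\pi(Z)$: clearly $\pi(Z) \supseteq \mc Z$ since for $N \in \mc Z$ the object $\omega N$ is torsionfree with $\pi\omega N \cong N \in \mc Z$, so $\omega N \in Z$ and $N = \pi\omega N \in \pi Z$; conversely every object of $Z$ is (by the part-(2) analysis) a subquotient of a product of objects $M_i \in \mc F$, so $\pi$ of it is a subquotient of $\prod \pi M_i \in \mc Z$, hence in $\mc Z$. Thus $\pi(Z) = \mc Z$. I expect the main obstacle to be part (2) — pinning down that closing $\mc F$ up under the closed-subcategory operations requires genuinely only controlled steps (one product, one subquotient) rather than a transfinite iteration, and checking carefully that the resulting $Z$ is $Y$-torsionfree generated and $Y$-essentially stable; the interplay between quotients (which break torsionfreeness) and the saturation functor $\omega\pi$ is exactly the subtle bookkeeping that the two conditions defining ``$Y$-closed'' are designed to manage, and (AB4*) is used precisely to keep products of subquotients inside the class of subquotients of products.
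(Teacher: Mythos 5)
Your overall approach matches the paper's: part (1) hinges on pushing the product in $X/Y$ through $\pi\bigl(\prod \omega\pi M_\alpha\bigr)$ and using $Y$-essential stability to place $\omega\pi M_\alpha$ in $Z$; part (2) hinges on (AB4*) to realize a product of subquotients of torsionfree objects as a subquotient of their product; part (3) is the expected inverse-bijection check. The paper is more modular: it first proves the weakly-closed version (Theorem~\ref{thm:quotientweaklyclosed}), giving the bijection between $Y$-weakly closed subcategories of $X$ and weakly closed subcategories of $X/Y$ with no (AB4*) and no essential-stability hypothesis in direction (1), and then the closed version only has to add closure under products on each side. You re-derive the weakly-closed scaffolding inline, and that is where the errors creep in. (A minor symptom in part (1): you invoke essential stability for closure under subobjects, but that step needs only the fact that subobjects of $\pi M$ lift to subobjects of $M$; essential stability is needed solely for products.)

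The genuine gap is in part (2). The claim that $\omega\pi$ of a subquotient of $P$ embeds into $\omega\pi P$ is false: if $B \subseteq A \subseteq P$ with $P$ torsionfree, then $\pi(A/B)$ is a subquotient of $\pi(P)$, not a subobject, and $\omega$ does not turn subquotients into subobjects. For instance over $R = \mb{Z}$ with $Y$ the torsion modules, take $P = \mb{Z}^2$, $A = P$, $B = \mb{Z}\cdot(1,2)$, so $A/B \cong \mb{Z}$ and the induced map $\omega\pi P \cong \mb{Q}^2 \to \omega\pi(A/B) \cong \mb{Q}$ is a surjection, not an inclusion. The correct route to essential stability of $Z_0$ is shorter and avoids any embedding claim: for $M \in Z_0$, $\omega\pi M$ is automatically $Y$-torsionfree, and $\pi(\omega\pi M) = \pi M$, which lies in $\mc{Z}$ because it is a subquotient of $\prod \pi P_i$ with the $P_i \in \mc{F}$ and $\mc{Z}$ is closed under subquotients and products; hence $\omega\pi M \in \mc{F} \subseteq Z_0$ directly. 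Likewise your torsionfree-generated argument is aimed the wrong way: rather than showing $(A/B)/\tau$ is a subquotient of $P$, note that $A$ itself is torsionfree (subobject of $P$) with $\pi(A) \subseteq \pi(P) \in \mc{Z}$, so $A \in \mc{F}$ and $A/B$ is a quotient of a torsionfree generator. These are exactly the arguments the paper packages in Proposition~\ref{prop:operations-on-Z} and Theorem~\ref{thm:quotientweaklyclosed} before Theorem~\ref{thm:quotientclosed} adds the product step.
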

We first prove Theorem~\ref{thm:main2} in the context of weakly closed subcategories (in this case the (AB4*) condition is not needed, nor is the hypothesis in (1) that $Z$ is $Y$-essentially stable).  Then we show it specializes to a bijection on closed subcategories.  We note that Kanda has given a different bijection of the weakly closed subcategories of a quotient category $X/Y$ with certain weakly closed subcategories of $X$ \cite[Proposition 4.13]{Kan15}, which does not restrict to a bijection on the closed subcategories.  The bijection we give in Theorem~\ref{thm:main2} is anticipated by the work of Smith in \cite[Proposition 7.10]{Smi02}, though with a different point of view; we explain the connection at the end of Section~\ref{sec:operations} below.  

As illustrations of our theorems, we give a number of examples to demonstrate some of the subtleties of closed subcategories that do not occur for weakly closed ones.  These can be found already in the following simple setting:  Let $R$ be a noetherian ring with ideal $I$, and let 
$Y$ be the localizing subcategory of $X = \rcatMod R$ consisting of modules supported along the closed subcategory $\rcatMod(R/I)$, namely 
\[
Y = \{ M \in \rcatMod R \, | \, \text{for all}\ m \in M\ \text{there exists}\ n \geq 1\ \text{such that}\ m I^n = 0 \}.
\]
Let $K$ be another ideal, so $Z = \rcatMod (R/K)$ is closed in $X$.

We give such an example (Example~\ref{ex:pinotclosed}) where $\pi(Z)$ is not closed in $\mc{X} = X/Y$, only weakly closed, showing that the $Y$-essentially stable hypothesis in Theorem~\ref{thm:main2}(1) is necessary in general.  We also show that there is an example of this type with a further quotient map $\wt{\pi}: \mc{X} \to \mc{X}/\mc{Y}$ such that Theorem~\ref{thm:main2}(2) fails for this quotient (Example~\ref{ex:weird}), showing the necessity of the (AB4*) assumption.

Smith defined and studied the natural intersection and union operations on weakly closed and closed subcategories of an arbitrary quasi-scheme $X$ in \cite{Smi02}:  if $W$ and $Z$ are weakly closed in $X$, then $W \cap Z$ consists of the objects in both $W$ and $Z$, while $W \cup Z$ is the smallest weakly closed subcategory containing $W$ and $Z$.  It is obvious that if $Z$ and $W$ 
are closed in $X$ then so is $Z \cap W$, but notably no claim is made in \cite{Smi02} that $Z \cup W$ is closed in general, though Smith notes this is true when $X = \rcatMod R$ for a ring $R$ \cite[Remark 3, p. 2138]{Smi02}.  We produce a ring $R$, an ideal $I$ and ideals $K_1, K_2$ 
such that if $Y$ is the localizing subcategory associated to $I$ as above, and $Z_1 = \rcatMod (R/K_1)$, $Z_2 = \rcatMod (R/K_2)$, then $\pi(Z_1)$ and and $\pi(Z_2)$ are closed in $X/Y$ but $\pi(Z_1) \cup \pi(Z_2)$ is not (Example~\ref{ex:badunion}).  The key point is that $Z_1$ and $Z_2$ are $Y$-essentially stable in $X$, but $Z_1 \cup Z_2$ is not.

We also provide a simple counterexample to a question of Smith \cite[Remark 5, p. 2139]{Smi02} asking whether the operations of union and intersection for weakly closed subcategories distribute over each other (Example~\ref{ex:notdist}).

While we have chosen to focus on closed subcategories in this paper, we are not claiming that this is the correct or most important kind of spectrum of a quasi-scheme to study.  There are also other important spectra which give different kinds of information, for example the \emph{atom spectrum} studied by Kanda in a  series of papers beginning with \cite{Kan12}. Closed subcategories do seem to be the best formal analogues of closed subschemes.  They are essential ingredients, for example, in Van den Bergh's machinery of noncommutative blowing up \cite{VdB01}.  So we believe that closed subcategories are important and worth understanding in more detail.   On the other hand, closed subcategories definitely have their limitations:  just as noncommutative rings tend to have relatively few ideals compared to their commutative counterparts, interesting examples such as noncommutative projective schemes often have relatively few closed subcategories in general---too few to fully represent the underlying geometry.  For example, Smith and Zhang have studied some special weakly closed but not closed subcategories which can be thought of as nice noncommutative curves in noncommutative projective planes \cite{SmZh98}.  However, general weakly closed subcategories in quasi-schemes are so plentiful that they do not all seem to be geometrically meaningful.  It would be interesting to determine a general class of subcategories with especially good properties that is intermediate between weakly closed and closed subcategories.

\subsection*{Acknowledgments}
We thank Ryo Kanda and Paul Smith for helpful conversations.

\section{Categorical background}
\label{sec:background}

In this paper we use standard conventions for avoiding set-theoretic problems in our study of categories.  We fix a Grothendieck universe $U$ and call the sets in $U$ \emph{small}.  The objects in a category $C$ may not form a small set, but $\Hom_C(M,N)$ is assumed small for any objects $M, N \in C$.  All index sets for limits and colimits, in particular for coproducts and products, are assumed to be small sets.  We prefer to refer to coproducts in an abelian category as direct sums.

Suppose that $X$ is an abelian category with arbitrary small direct sums (Grothendieck's condition (AB3)).
Recall that an object $O$ in $X$ is a \emph{generator} if whenever $0 \neq f \in \Hom_X(Y, Z)$ 
then there is a morphism $g: O \to Y$ such that $0 \neq f \circ g \in \Hom(O, Z)$.  It is standard that $O \in X$ is a generator if and only if every object of $X$ is an epimorphic image of a direct sum of copies of $O$.  Similarly, a small-indexed set of objects  $S = \{ O_{\alpha} | \alpha \in A \}$ is said to be a \emph{set of generators} if $O = \bigoplus_{\alpha \in A} O_{\alpha}$ is a generator; equivalently, if every object in $X$ is an epimorphic image of a direct sum of objects taken from $S$.  In any (AB3) abelian category $X$ with a small set of generators, the set of subobjects of any given object in $X$ is small---that is, $X$ is \emph{well-powered}, or \emph{locally small} \cite[Proposition 1.2.2, Proposition 3.4]{Pop73}.

\begin{definition}
Let $X$ be an abelian category.  We say $X$ is a \emph{Grothendieck category} if $X$ has a generator $O$ and 
satisfies Grothendieck's condition (AB5):  (small) filtered colimits are exact.
\end{definition}

A Grothendieck category automatically has a number of other good properties.  The (AB5) condition implies (AB4), that direct sums are exact \cite[Corollary 2.8.9]{Pop73}.  A Grothendieck category automatically has (small) products (condition (AB3*)) \cite[Corollary 7.10]{Pop73}, but products need not be exact.  Every object in a Grothendieck category has an injective hull \cite[Theorem 3.10.10]{Pop73}, so in particular there are enough injective objects.  

It is often more convenient to work with a set of generators each of which has a reasonable size, rather than one big generator.  For example, one might want to assume each generator is noetherian.  We primarily need the following weaker condition in this paper.  An object $M$ in an abelian category $X$ is called \emph{compact} (\cite{Pop73} uses the word \emph{small}) if whenever $\phi: M \to \bigoplus_{\alpha \in A} N_{\alpha}$ is a morphism in $X$, there is some finite set of indices $\alpha_1, \dots, \alpha_k$ such that $\phi(M) \subseteq \bigoplus_{i=1}^k N_{\alpha_i}$.  Alternatively, $M$ is compact if and only if the functor $\Hom_X(M, -)$ commutes with arbitrary direct sums \cite[Proposition 3.5.1]{Pop73}.

As described in the introduction, we are interested in understanding subcategories of a Grothendieck category $X$ with 
special properties.  Throughout this paper we are almost exclusively interested in full subcategories of a given category, and so we describe them based on the properties of the objects they contain.  Unless otherwise stated the reader should assume that all subcategories are full.
\begin{definition}
Let $Z$ be a full subcategory of a Grothendieck category $X$.   
\begin{enumerate}
\item 
We say that $Z$ is \emph{pre-localizing} or \emph{weakly closed} if (i) for any $M \in Z$, all subobjects and quotient objects of $M$ are in $Z$ (informally, we say that $Z$ is \emph{closed under subquotients}), and (ii) for any (small) set $\{M_{\alpha} \}$ with each $M_{\alpha} \in Z$, then $\bigoplus_{\alpha} M_{\alpha} \in Z$ (i.e. $Z$ is \emph{closed under direct sums}).
We will prefer the term weakly closed in this paper.
\item 
We say that $Z$ is \emph{localizing} if $Z$ is weakly closed and if whenever $0 \to M \to N \to P \to 0$ is a short exact sequence in $X$ with $M, P \in Z$, then $N \in Z$ (i.e. $Z$ is \emph{closed under extensions}).
\item We say that $Z$ is \emph{closed} if it is weakly closed and for any (small) set $\{M_{\alpha} \}$ with each $M_{\alpha} \in Z$, then $\prod_{\alpha} M_{\alpha} \in Z$ (i.e. $Z$ is \emph{closed under products}).
 \end{enumerate}
\end{definition}
Note that to check a subcategory $Z$ is closed we only need to check that $Z$ is closed under subquotients and products, and do not need to check separately that $Z$ is closed under direct sums.  This is because in a Grothendieck category 
the natural morphism $\bigoplus M_{\alpha} \to \prod M_{\alpha}$ is a monomorphism \cite[Corollary 8.10]{Pop73}. 

We should mention that weakly closed and closed subcategories also have natural functorial descriptions, which are sometimes taken as the definitions, as in \cite[Definition 2.4]{Smi02}.  If $Z$ is a full subcategory of $X$, and $i: Z \to X$ is the inclusion functor, then $Z$ is weakly closed if and only if $i$ has a right adjoint $i^!: X \to Z$. Explicitly, in this case $i^!(M)$ is the unique largest subobject of $M$ in $Z$.  Similarly, $Z$ is closed if and only if $i$ has a left adjoint $i^*:X \to Z$, and in this case $i^*(M)$ is the unique largest factor object of $M$ in $Z$.  

Next, we review the facts about quotient categories that we will need.  Let $X$ be a Grothendieck category.
Let $Y$ be a localizing subcategory of $X$. Because $Y$ is weakly closed, every object $M \in X$ has a unique largest subobject in $Y$, called its \emph{$Y$-torsion subobject} $\tau(M)$ \cite[Proposition 4.4.5]{Pop73}.  We say that the object $M$ is $Y$-\emph{torsionfree} if $\tau(M) = 0$ and $Y$-\emph{torsion} if $\tau(M) = M$; that is, if $M \in Y$.   If $\{ M_{\alpha} \}$ is a collection of $Y$-torsionfree objects and $N$ is $Y$-torsion, then any morphism $N \to M_{\alpha}$ is $0$, so any morphism $N \to \prod_{\alpha} M_{\alpha}$ is $0$.  It follows that $\prod_{\alpha} M_{\alpha}$ is also $Y$-torsionfree, and the same holds for its subobject $\bigoplus_{\alpha} M_{\alpha}$.

Following Gabriel, we define the \emph{quotient category} $X/Y$.  This category has the same objects as $X$, and there is an exact quotient functor $\pi: X \to X/Y$ which is the identity map on objects.  The morphisms in $X/Y$ are given by $\Hom_{X/Y}(\pi M, \pi N) = \dirlim\ \Hom_X( M', N/N')$, the direct limit running over all subobjects $M' \subseteq M$ with $M/M' \in Y$ and all subobjects $N' \subseteq N$ with $N' \in Y$ \cite[Theorem 4.3.3]{Pop73}.  The map $\pi$ has the following universal property:  If $F: X \to W$ is an exact functor between abelian categories such that $F(N) = 0$ for all $N \in Y$, then there is a unique functor $\overline{F}: X/Y \to W$ such that $\overline{F} \circ \pi = F$ \cite[Corollary 4.3.11]{Pop73}. Because we assume that $X$ is Grothendieck, $X/Y$ is also Grothendieck \cite[Corollary 4.6.2]{Pop73}.  Moreover if $S = \{ O_{\alpha} | \alpha \in A \}$ generates $X$, then $\pi(S) = \{ \pi(O_{\alpha}) | \alpha \in A \}$ generates $X/Y$ \cite[Lemma 4.4.8]{Pop73}.

The quotient functor $\pi$ has a right adjoint $\omega: X/Y \to X$ called the \emph{section functor}. The counit $v: \pi \omega \to \on{id}_{X/Y}$ of the adjunction is an isomorphism \cite[Proposition 4.4.3]{Pop73}.  The unit $u: \on{id}_X \to \omega \pi$ has the following explicit description.  Given $M \in X$, let $M' = M/\tau(M)$, and let $E(M')$ be the injective hull of $M'$ in $X$.  Then $\omega \pi(M) = N$, where $N/M' = \tau(E(M')/M')$, and $u_M: M \to \omega \pi(M)$ is the natural epimorphism $M \to M'$ followed by the inclusion $M' \to N$.   In other words, $\omega \pi$ first mods out by the $Y$-torsion submodule, and then takes the unique largest essential extension by an object in $Y$ \cite[Lemma 4.4.1(2), Lemma 4.4.6(2)]{Pop73}.  

The functor $\pi$ preserves coproducts (being a left adjoint), so that given a collection $M_{\alpha}$ of objects in $X$, the direct sum $\bigoplus \pi(M_{\alpha})$ in $X/Y$ is simply equal to $\pi(\bigoplus M_{\alpha})$.   The functor $\pi$ does not preserve products in general, but $\omega$ does (being a right adjoint).  It follows that given a collection $\mc{M}_{\alpha}$ of objects in $X/Y$, the product $\prod \mc{M}_{\alpha}$ can be described as $\pi( \prod \omega(\mc{M}_{\alpha}))$.

\section{Special subcategories and systems of filters in a set of generators}
\label{sec:generators}

Consider $X = \rcatMod R$, the category of right modules over a ring $R$.  In this case the weakly closed, closed, and localizing subcategories of $X$ can be described in terms of certain families of right ideals of $R$ called filters.  This idea essentially goes back to Gabriel's seminal paper \cite[Section V.2]{Gab62}. See also the work of Kanda \cite{Kan15}, where the idea is generalized to categories of quasicoherent sheaves over any locally noetherian scheme.

Note that a module category $\rcatMod R$ has the single compact projective generator $R$.   In this section we will assume the following more general hypothesis on $X$:
\begin{hypothesis}
\label{hyp-standing}
\label{hyp:standing}
Let $X$ be a Grothendieck category with a small set of compact projective generators $S = \{ O_{\alpha} | \alpha \in A \}$.
\end{hypothesis}
The results of this section will give a description of the weakly closed, closed, and localizing subcategories in the setting of Hypothesis~\ref{hyp-standing}, in terms of filters inside each of the compact projective generators $\{ O_{\alpha} \}$.

\begin{definition}
Let $O \in X$.  A \emph{filter} inside $O$ is a collection $\mc{F}$ of subobjects of $O$ such that (i) $O \in \mc{F}$; (ii) $\mc{F}$ is closed under specialization, i.e. if $I \subseteq J$ are subobjects of $O$ and $I \in \mc{F}$, then $J \in \mc{F}$; and (iii) $\mc{F}$ is closed under (finite) intersection, i.e. if $I, J \in \mc{F}$ then $I \cap J \in \mc{F}$.  A filter $\mc{F}$ is \emph{principal} if there is $I \in \mc{F}$ such that 
$\mc{F} = \{ J \subseteq O | I \subseteq J \}$, i.e. $\mc{F}$ consists of all specializations of some fixed subobject $I$ of $O$.  
\end{definition}
Note that in the presence of property (ii), property (i) is equivalent to assuming the filter is nonempty.
Clearly a filter $\mc{F}$ is principal if and only if it is closed under arbitrary intersections.

\begin{definition}
Assume Hypothesis~\ref{hyp:standing}.  A \emph{filter system} in the given set of generators is a choice of filter $\mc{F}_{\alpha}$ of $O_{\alpha}$ for each $\alpha$, such that 
\begin{equation}
\text{for any morphism}\ f: O_{\alpha} \to O_{\beta},\ \text{if}\ J \in \mc{F}_{\beta}\ \text{then}\ f^{-1}(J) \in \mc{F}_{\alpha}. \tag{$\dagger$}
\end{equation}
A filter system $\{ \mc{F}_{\alpha} \}$ is \emph{principal} if every filter $\mc{F}_{\alpha}$ is principal.  We also say that a filter system is \emph{Gabriel} if given any epimorphism $g:  \bigoplus_i O_{\gamma_i}/I_{\gamma_i} \to J/K$ in $X$, where $K \subseteq J \subseteq O_{\beta}$ with each $I_{\gamma_i} \in \mc{F}_{\gamma_i}$ and $J \in \mc{F}_{\beta}$, then $K \in \mc{F}_{\beta}$.
\end{definition}

The condition $(\dagger)$ in the definition of filter system has the following alternative expression: 
\begin{equation}
\text{for any morphism}\ f: O_{\alpha} \to O_{\beta}/J\ \text{with}\ J \in \mc{F}_{\beta},\ \ker(f) \in \mc{F}_{\alpha}. \tag{$\dagger'$}
\end{equation}
The equivalence of $(\dagger)$ and $(\dagger')$ follows easily from the projectivity of the generators $O_{\beta}$.  
We freely use this alternative below.

\begin{theorem}
\label{thm:weakly-closed}
\label{prop:weakly-closed}
Assume Hypothesis~\ref{hyp-standing}.
\begin{enumerate}
    \item Given a filter system $\{ \mc{F}_{\alpha} \}$ in the set of generators $\{ O_{\alpha} \}$, the subcategory $Z$ of $X$ generated by all objects $\{ O_{\alpha}/I_{\alpha} \, | \, I_{\alpha} \in \mc{F}_{\alpha} \}$ is a weakly closed subcategory of $X$.
\item Given a weakly closed subcategory $Z$ of $X$, Let $\mc{F}_{\alpha}$ be the collection of all subobjects $I$ of $O_{\alpha}$ such that $O_{\alpha}/I \in Z$.  Then $\mc{F}_{\alpha}$ is a filter in $O_{\alpha}$ and $\{ \mc{F}_{\alpha} \}$ is a filter system in $\{ O_{\alpha} \}$.
\item The operations in (1) and (2) give a bijective correspondence between weakly closed subcategories $Z$ of $X$ and filter systems $\{ \mc{F}_{\alpha} \}$ in the set of generators $S = \{ O_{\alpha} \}$.  
\item In the correspondence of part (3), $Z$ is closed (resp. localizing) if and only the filter system $\{ \mc{F}_{\alpha} \}$ is principal (resp. Gabriel).
\end{enumerate}
  \end{theorem}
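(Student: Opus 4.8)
The plan is to isolate one membership criterion and then run it through all four parts. Parts (1) and (2) need almost nothing. For (1), note that for any small family $\mc{G}$ of objects the class of all subquotients of direct sums of members of $\mc{G}$ is already weakly closed: it is visibly closed under subobjects and quotients since a subquotient of a subquotient is a subquotient, and it is closed under direct sums because (using exactness of direct sums, which holds in a Grothendieck category) a direct sum of subquotients of direct sums of members of $\mc{G}$ is again a subquotient of a direct sum of members of $\mc{G}$; hence this class is exactly the weakly closed subcategory generated by $\mc{G}$, and the filter axiom $(\dagger)$ is not used here at all. For (2), I would check the three filter axioms directly: $O_\alpha/O_\alpha=0\in Z$; if $I\subseteq J$ and $O_\alpha/I\in Z$ then $O_\alpha/J$ is a quotient of $O_\alpha/I$; and $O_\alpha/(I\cap J)$ embeds into $O_\alpha/I\oplus O_\alpha/J$, so closure under finite intersections follows from closure under direct sums and subobjects. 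Axiom $(\dagger')$ is immediate, since for $f\colon O_\alpha\to O_\beta/J$ with $O_\beta/J\in Z$ we have $O_\alpha/\ker f\cong\im f\subseteq O_\beta/J\in Z$.

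The criterion I would extract, valid for any weakly closed $Z$ with associated filters $\mc{F}_\alpha=\{I\subseteq O_\alpha: O_\alpha/I\in Z\}$, is: $M\in Z$ if and only if $\ker f\in\mc{F}_\alpha$ for every $\alpha$ and every $f\colon O_\alpha\to M$. The ``only if'' is the argument just used; the ``if'' holds because $M$ is an epimorphic image of $\bigoplus_{(\alpha,f)}O_\alpha$, which factors through an epimorphism $\bigoplus_{(\alpha,f)}O_\alpha/\ker f\twoheadrightarrow M$ whose source lies in $Z$. Using this, part (3) amounts to two round-trips. Starting from a filter system $\{\mc{F}_\alpha\}$, forming $Z$, and forming the associated filters again, the one nonobvious point is that $O_\alpha/I\in Z$ forces $I\in\mc{F}_\alpha$: write $O_\alpha/I$ as a subquotient of some $\bigoplus_i O_{\gamma_i}/I_{\gamma_i}$, use projectivity of $O_\alpha$ to lift the canonical map $O_\alpha\to O_\alpha/I$ to a map $\psi\colon O_\alpha\to\bigoplus_i O_{\gamma_i}/I_{\gamma_i}$ with $\ker\psi\subseteq I$, use compactness of $O_\alpha$ to make $\psi$ land in a finite subsum, apply $(\dagger')$ to each coordinate $\psi_j$ to get $\ker\psi_j\in\mc{F}_\alpha$, and conclude $\ker\psi=\bigcap_j\ker\psi_j\in\mc{F}_\alpha$, hence $I\in\mc{F}_\alpha$ by the specialization axiom. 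Starting from a weakly closed $Z$ and regenerating $Z'$ from its associated filters, $Z'\subseteq Z$ is clear and $Z\subseteq Z'$ is exactly the ``if'' half of the criterion.

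For part (4) I would treat the two equivalences separately. For ``closed $\Leftrightarrow$ principal'': if $Z$ is closed and $\{I_\lambda\}\subseteq\mc{F}_\alpha$, then $\prod_\lambda O_\alpha/I_\lambda\in Z$ and $O_\alpha/\bigcap_\lambda I_\lambda$ is a subobject of it, so $\bigcap_\lambda I_\lambda\in\mc{F}_\alpha$, i.e. $\mc{F}_\alpha$ is principal (this uses only that kernels commute with products, not exactness of products). Conversely I would first reduce ``$Z$ closed'' to: for every $N\in X$ the filter $\{N'\subseteq N: N/N'\in Z\}$ is closed under arbitrary intersections --- indeed, given $M_\mu\in Z$, applying this to $N=\prod M_\mu$ with the $N'$ taken to be the kernels of the projections shows $\prod M_\mu\in Z$. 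Then, given $\{N_\lambda\}$ with $N/N_\lambda\in Z$ and a map $h\colon O_\beta\to N/\bigcap_\lambda N_\lambda$, projectivity lifts $h$ to $\tilde h\colon O_\beta\to N$; then $\ker h=\bigcap_\lambda\tilde h^{-1}(N_\lambda)$, each $\tilde h^{-1}(N_\lambda)\in\mc{F}_\beta$ because $O_\beta/\tilde h^{-1}(N_\lambda)$ embeds into $N/N_\lambda$, and principality of $\mc{F}_\beta$ gives $\ker h\in\mc{F}_\beta$; the criterion then gives $N/\bigcap_\lambda N_\lambda\in Z$. For ``localizing $\Leftrightarrow$ Gabriel'': if $Z$ is localizing and $g\colon\bigoplus_i O_{\gamma_i}/I_{\gamma_i}\twoheadrightarrow J/K$ is as in the Gabriel condition, then $J/K\in Z$ and $O_\beta/J\in Z$, so the short exact sequence $0\to J/K\to O_\beta/K\to O_\beta/J\to 0$ forces $O_\beta/K\in Z$, i.e. $K\in\mc{F}_\beta$. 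Conversely, given the Gabriel condition and $0\to M\to N\to P\to 0$ with $M,P\in Z$, by the criterion it suffices to show $\ker h\in\mc{F}_\beta$ for every $h\colon O_\beta\to N$: set $J=h^{-1}(M)$, so $O_\beta/J$ embeds into $P\in Z$ and hence $J\in\mc{F}_\beta$, while $J/\ker h\cong h(J)\subseteq M\in Z$, so (by the criterion, as in part (3)) there is an epimorphism $\bigoplus_i O_{\gamma_i}/I_{\gamma_i}\twoheadrightarrow J/\ker h$ with $I_{\gamma_i}\in\mc{F}_{\gamma_i}$; applying the Gabriel condition to this epimorphism, with $\ker h\subseteq J\subseteq O_\beta$, yields $\ker h\in\mc{F}_\beta$.

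The routine parts are (1), (2), the forward directions in (4), and the diagram-chasing in (3) with projectivity and compactness. I expect the real work to be in the converse halves of (4): seeing that ``$Z$ closed'' can be tested one object at a time through principality of its quotient-filters and that this reduces, via lifting maps along epimorphisms, to principality on the generators alone; and seeing that the Gabriel condition is engineered exactly so that extension-closure of $Z$ is equivalent to it, with the subobject $h^{-1}(M)$ serving as the bridge between an arbitrary extension and the generating quotients.
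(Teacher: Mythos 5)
Your proof is correct, and the underlying toolkit is the same as the paper's (projectivity to lift maps, compactness to cut to finite subsums, the alternative phrasing $(\dagger')$), but you have reorganized the decomposition in a way worth noting. Where the paper interprets the $Z$ of part (1) as the class of quotients of direct sums of the $O_\alpha/I_\alpha$ and then \emph{uses} $(\dagger')$ to show that this class is already closed under subobjects, you take $Z$ to be the class of all subquotients of direct sums, which is automatically weakly closed for \emph{any} generating family, and you postpone the use of $(\dagger')$ to part (3). Both routes arrive at the same weakly closed subcategory, but yours makes part (1) contentless and concentrates all the real work in one place; the payoff is the clean ``membership criterion'' ($M\in Z$ iff $\ker f\in\mc{F}_\alpha$ for every $f\colon O_\alpha\to M$), which you then reuse verbatim in both halves of part (4). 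For the converse of ``closed $\Leftrightarrow$ principal,'' you route through the intermediate statement that $\{N'\subseteq N: N/N'\in Z\}$ is intersection-closed for \emph{every} object $N$; the paper instead argues directly that $\prod M_\beta$ is a sum of images of generators, each lying in $Z$. These are equivalent arguments (both hinge on $\bigcap_\lambda \tilde h^{-1}(N_\lambda)\in\mc{F}_\beta$ by principality), but your reduction is conceptually clean and makes the role of principality very visible. Finally, you supply a complete proof of ``localizing $\Leftrightarrow$ Gabriel,'' which the paper explicitly leaves to the reader; your argument --- using $J=h^{-1}(M)$ as the bridge, invoking the criterion to produce the required epimorphism from $\bigoplus O_{\gamma_i}/I_{\gamma_i}$, and then applying the Gabriel condition with $K=\ker h$ --- is exactly what the Gabriel condition is engineered to make work, and it is correct.
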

\begin{proof}
(1) Because $Z$ consists of all quotient objects of direct sums of the generators, and direct sums are exact, $Z$ is automatically closed under direct sums and quotient objects.  We just need to check that $Z$ is closed under subobjects.  For this, since $Z$ is closed under quotient objects, it is enough to check that given a direct sum $M = \bigoplus_k O_{\beta_k}/I_{\beta_k}$, where $I_{\beta_k} \in \mc{F}_{\beta_k}$ for each $\beta_k$, any subobject of $M$ is in $Z$.  Let $N$ be a subobject of $M$, and choose an epimorphism $f: \bigoplus_{\gamma_i} O_{\gamma_i} \to N$.  Let $f_i: O_{\gamma_i} \to N$ be the $i$th coordinate morphism of $f$.  Then $N = \sum_{i} f_i(O_{\gamma_i})$.  Since $Z$ is closed under direct sums and quotient objects, it is closed under sums of subobjects.  Thus it is enough to prove that each $f_i(O_{\gamma_i})$ is in $Z$.

We have reduced to showing that if $g: O_{\gamma} \to M = \bigoplus O_{\beta_k}/I_{\beta_k}$ is a morphism in $X$, then its image is in $Z$.  Because $O_{\gamma}$ is compact, we can replace $M$ with a finite direct sum and assume that 
$M = \bigoplus_{k=1}^n O_{\beta_k}/I_{\beta_k}$.  Now the image of $g$ is isomorphic to $O_{\gamma}/\ker g$.  Moreover, $\ker g = \bigcap_{k=1}^n \ker g_k$, where $g_k = \pi_k \circ g: O_{\gamma} \to O_{\beta_k}/I_{\beta_k}$, with $\pi_k$ the $k$th projection map.  Since $\mc{F}_{\gamma}$ is closed under finite intersection, we just need $\ker g_k \in \mc{F}_{\gamma}$ for each $k$.

With this further reduction, we now consider $h: O_{\gamma} \to O_{\beta}/I$ where $I \in \mc{F}_{\beta}$, and we need to prove that $\ker h \in \mc{F}_{\gamma}$.  This is precisely the alternative phrasing $(\dagger')$ of the defining property of a filter system.

(2) Let $\mc{F}_{\alpha}$ be the set of subobjects $I$ of $O_{\alpha}$ such that $O_{\alpha}/I \in Z$, where $Z$ is weakly closed.  Obviously $O_{\alpha} \in \mc{F}_{\alpha}$.  If $I \in \mc{F}_{\alpha}$ and $I \subseteq J$, then clearly $J \in \mc{F}_{\alpha}$ since $Z$ is closed under factor objects.  Since $O_{\alpha}/(I \cap J)$ embeds in $(O_{\alpha}/I) \oplus (O_{\alpha}/J)$, using that $Z$ is closed under direct sums and subobjects we see that $\mc{F}_{\alpha}$ is closed under finite intersection.  So $F_{\alpha}$ is a filter of $O_{\alpha}$.

Given a morphism $f: O_{\alpha} \to O_{\beta}$ with $I \in \mc{F}_{\beta}$, then $O_{\alpha}/(f^{-1}(I))$ is isomorphic to a subobject of $O_{\beta}/I \in Z$.  So $O_{\alpha}/(f^{-1}(I)) \in Z$ and thus $f^{-1}(I) \in \mc{F}_{\alpha}$, proving that $\{ \mc{F}_{\alpha} \}$ is a filter system.

(3) Let $Z$ be weakly closed and define the filter system $\{\mc{F}_{\alpha} \}$ as in (2), where $\mc{F}_{\alpha}$ consists of the subobjects $I$ of $O_{\alpha}$ such that $O_{\alpha}/I \in Z$.  Let $Z'$ be  the subcategory generated by all objects $O_{\alpha}/I$ with $I \in \mc{F}_{\alpha}$.  By definition $Z'$ is generated by objects in $Z$, so $Z' \subseteq Z$.  Conversely, suppose that $M \in Z$.  Choose some epimorphism $f: \bigoplus_i O_{\gamma_i} \to M$.  Each coordinate morphism $f_i: O_{\gamma_i} \to M$ has image in $Z$ and so $I_{\gamma_i} = \ker f_i \in \mc{F}_{\gamma_i}$.  Thus $f$ factors through to give an epimorphism $\bigoplus O_{\gamma_i}/I_{\gamma_i} \to M$,
which shows that $M \in Z'$.  Thus $Z \subseteq Z'$ and $Z = Z'$.

Conversely, if we start with a filter system $\{ \mc{F}_{\alpha} \}$, let $Z$ be generated by all objects $O_{\alpha}/I_{\alpha}$ with $I_{\alpha} \in \mc{F}_{\alpha}$.  Define a filter system $\{ \mc{F}'_{\alpha} \}$, where $\mc{F}'_{\alpha}$ consists of all subobjects $I$ of $O_{\alpha}$ such that 
$O_{\alpha}/I \in Z$.  It is obvious that $\mc{F}_{\alpha} \subseteq \mc{F}'_{\alpha}$ for each $\alpha$.  If $I \in \mc{F}'_{\alpha}$, then 
$O_{\alpha}/I \in Z$, so there is an epimorphism $h: \bigoplus_i O_{\gamma_i}/I_{\gamma_i} \to O_{\alpha}/I$ with all $I_{\gamma_i} \in \mc{F}_{\gamma_i}$.  Since $O_{\alpha}$ is projective, there is a map $\wh{\pi}: O_{\alpha} \to \bigoplus_i O_{\gamma_i}/I_{\gamma_i}$ such that $h \circ \wh{\pi}= \pi$, where $\pi: O_{\alpha} \to O_{\alpha}/I$ is the natural epimorphism. As $O_{\alpha}$ is compact, the image of $\wh{\pi}$ must be contained in finitely many summands.  This shows that we may replace $\bigoplus_i O_{\gamma_i}/I_{\gamma_i}$ with a finite sum $\bigoplus_{i=1}^k O_{\gamma_i}/I_{\gamma_i}$ and $h$ with its restriction 
$h: \bigoplus_{i=1}^k O_{\gamma_i}/I_{\gamma_i} \to O_{\alpha}/I$, which must still be an epimorphism.

For each $i$, let $\wh{\pi}_i = p_i \circ \wh{\pi} : O_{\alpha} \to O_{\gamma_i}/I_{\gamma_i}$, for the $i$th coordinate projection $p_i$.  By $(\dagger')$, $I_{\gamma_i} \in \mc{F}_{\gamma_i}$ implies $\ker \wh{\pi}_i \in \mc{F}_{\alpha}$.
Then $\ker \wh{\pi} = \bigcap_{i=1}^k \ker \wh{\pi}_i \in \mc{F}_{\alpha}$ since a filter is closed under finite intersection.
Finally, $\ker \wh{\pi} \subseteq \ker \pi = I$, so $I \in \mc{F}_{\alpha}$.  Thus $\mc{F}'_{\alpha} \subseteq \mc{F}_{\alpha}$ for all $\alpha$ as well, so $\mc{F}_{\alpha} = \mc{F}'_{\alpha}$.

(4)  Let $Z$ be a closed subcategory of $X$, and let $\{\mc{F}_{\alpha} \}$ be the corresponding filter system.  Let $\{ I_{\beta} \}$ be a family of objects in $\mc{F}_{\alpha}$.  Consider the 
map $f: O_{\alpha} \to \prod_{\beta} O_{\alpha}/I_{\beta}$ whose coordinate map $f_{\beta}: O_{\alpha} \to O_{\alpha}/I_{\beta}$ is the canonical epimorphism for each $\beta$.
Note that $\ker f = \bigcap_{\beta} I_{\beta}$.  Because $Z$ is closed under products and subobjects, 
$O_{\alpha}/(\ker f) \in Z$ and hence $\bigcap_{\beta} I_{\beta} \in \mc{F}_{\alpha}$.  Thus $\mc{F}_{\alpha}$ is closed under arbitrary intersections and so is a principal filter.  By definition $\{\mc{F}_{\alpha} \}$ is a principal filter system.

Conversely, suppose that $\{ \mc{F}_{\alpha} \}$ is a principal filter system.  Let $Z$ be the corresponding weakly closed subcategory of $X$.  
Let $P = \prod_{\beta} M_{\beta}$ be a product with each $M_{\beta} \in Z$.  Consider the image of a map $f: O_{\alpha} \to P$, which is isomorphic to $O_{\alpha}/\ker f$.  Here,  $\ker f = \bigcap_{\beta} \ker f_{\beta}$, where $f_{\beta} = \pi_{\beta} \circ f: O_{\alpha} \to M_{\beta}$ is the coordinate map.  Clearly $O_{\alpha}/(\ker f_{\beta}) \in Z$, since it is a subobject of $M_{\beta} \in Z$.  Then $\ker f_{\beta} \in \mc{F}_{\alpha}$, because by the bijective correspondence, $\mc{F}_{\alpha}$ contains all subobjects $I$ of $O_{\alpha}$ with $O_{\alpha}/I \in Z$.  Now $\mc{F}_{\alpha}$ is closed under arbitrary intersection 
and thus $\ker f \in \mc{F}_{\alpha}$.  Hence the image of $f$ is in $Z$.  Now $P$ is a sum of subobjects which are images of generators $O_{\alpha}$; since $Z$ is closed under sums of subobjects, $P \in Z$.  We conclude that $Z$ is closed under products, so $Z$ is a closed subcategory.

We have included the characterization of localizing subcategories in terms of Gabriel filter systems for completeness, but we will not need it below.  We leave the proof to the reader.  
\end{proof}

Suppose that $Z$ is a closed subcategory of $X$, where $X$ satisfies Hypothesis~\ref{hyp-standing}.
By Theorem~\ref{thm:weakly-closed}, $Z$ corresponds to a principal filter system $\{ \mc{F}_{\alpha} \}$. Thus for each $\alpha$ there is a unique $I_{\alpha} \subseteq O_{\alpha}$ such that $\mc{F}_{\alpha} = \{ J_{\alpha} \, | \, I_{\alpha} \subseteq J_{\alpha} \subseteq O_{\alpha} \}$.  The condition $(\dagger)$ in this case is easily seen to be equivalent to the following:
if $f: O_{\alpha} \to O_{\beta}$ is a morphism, then $I_{\alpha} \subseteq f^{-1}(I_{\beta})$, or equivalently, $f(I_{\alpha}) \subseteq I_{\beta}$.
We give this property a name:
\begin{definition}
Let $X$ have generating set $S = \{O_{\alpha} \, | \, \alpha \in A \}$.  An \emph{ideal} in this set of generators $S$ is a choice of subobject $I_{\alpha} \subseteq O_{\alpha}$ for each $\alpha$, such that for any morphism $f: O_{\alpha} \to O_{\beta}$ in $X$, $f(I_{\alpha}) \subseteq I_{\beta}$.
\end{definition}

For future reference we also state explicitly the conclusion of our discussion above:
\begin{corollary}
\label{cor:idealinprojcase}
Let $X$ satisfy Hypothesis~\ref{hyp-standing}, so has $X$ has generating set $S= \{ O_{\alpha} | \alpha \in A \}$, where each $O_{\alpha}$ is compact and projective.  Then the closed subcategories $Z$ of $X$ are in one-to-one correspondence with ideals in the set of generators $S$, where if $\{ I_{\alpha} \}$ is an ideal then the corresponding closed subcategory $Z$ is generated by $\{O_{\alpha}/I_{\alpha} \}$.
\end{corollary}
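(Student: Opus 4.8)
The plan is to deduce this immediately from Theorem~\ref{thm:weakly-closed} together with the paragraph preceding the statement, which is where the real work has already been done. First I would invoke parts (3) and (4) of Theorem~\ref{thm:weakly-closed}: the operations described there give a bijection between closed subcategories $Z$ of $X$ and \emph{principal} filter systems $\{ \mc{F}_{\alpha} \}$ in the set of generators $S$. So it remains only to set up a bijection between principal filter systems in $S$ and ideals in $S$, in a way compatible with the description of the associated subcategory.

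Next I would make this translation explicit. A principal filter $\mc{F}_{\alpha}$ inside $O_{\alpha}$ is, by definition, determined by its unique minimal element $I_{\alpha}$ via $\mc{F}_{\alpha} = \{ J \mid I_{\alpha} \subseteq J \subseteq O_{\alpha} \}$, and conversely every subobject $I_{\alpha} \subseteq O_{\alpha}$ arises this way. Hence a principal filter system is exactly the data of a collection $\{ I_{\alpha} \}$ with $I_{\alpha} \subseteq O_{\alpha}$ satisfying compatibility condition $(\dagger)$. The key point is that, for such a collection, $(\dagger)$ for the associated principal filters is equivalent to the ideal condition $f(I_{\alpha}) \subseteq I_{\beta}$ for every morphism $f : O_{\alpha} \to O_{\beta}$: applying $(\dagger)$ to the minimal element $J = I_{\beta}$ of $\mc{F}_{\beta}$ gives $I_{\alpha} \subseteq f^{-1}(I_{\beta})$, i.e.\ $f(I_{\alpha}) \subseteq I_{\beta}$; conversely, if $f(I_{\alpha}) \subseteq I_{\beta}$ and $I_{\beta} \subseteq J$, then $I_{\alpha} \subseteq f^{-1}(I_{\beta}) \subseteq f^{-1}(J)$, so $f^{-1}(J) \in \mc{F}_{\alpha}$. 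This is precisely the equivalence recorded in the discussion before the corollary, so in the write-up I would simply cite that discussion rather than repeat it.

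Finally I would identify the closed subcategory attached to the ideal $\{ I_{\alpha} \}$. By Theorem~\ref{thm:weakly-closed}(1), the subcategory $Z$ corresponding to the principal filter system $\{ \mc{F}_{\alpha} \}$ is generated by all $O_{\alpha}/J$ with $I_{\alpha} \subseteq J \subseteq O_{\alpha}$; but each such $O_{\alpha}/J$ is a quotient object of $O_{\alpha}/I_{\alpha}$, and a weakly closed subcategory is closed under quotient objects, so $Z$ is already generated by the smaller family $\{ O_{\alpha}/I_{\alpha} \}$. Composing the bijection of Theorem~\ref{thm:weakly-closed}(3)--(4) with the bijection between principal filter systems and ideals in $S$ then yields the asserted one-to-one correspondence with the stated generation. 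I do not expect a genuine obstacle here — the substantive content is in Theorem~\ref{thm:weakly-closed} — the only step needing a moment's care being the equivalence of $(\dagger)$ with the ideal condition, which uses nothing more than that a principal filter is determined by its minimal element and that taking preimages under $f$ preserves inclusions.
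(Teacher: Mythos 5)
Your proof is correct and follows exactly the route the paper takes: the paper's formal proof is the one-liner ``follows from Theorem~\ref{thm:weakly-closed}(4) and the definition of ideal,'' with the substance (a principal filter system is the same data as its unique minimal elements, and condition $(\dagger)$ then becomes $f(I_\alpha) \subseteq I_\beta$) laid out in the discussion immediately preceding the corollary, which you have reproduced faithfully. The additional observation that $Z$ is already generated by the $O_\alpha/I_\alpha$ alone, since the other $O_\alpha/J$ are quotient objects of these, is a harmless and correct bit of extra detail.
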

\begin{proof}
This follows from Theorem~\ref{thm:weakly-closed}(4) and the definition of ideal.
\end{proof}

\section{(Weakly) closed subcategories and Quotient categories}
\label{sec:quotient}

Let $X$ be a Grothendieck category with localizing subcategory $Y$.  We would like to understand how the (weakly) closed subcategories of $\mc{X} = X/Y$ can be described in terms of those of $X$.  We reiterate that any subcategory defined by specifying its set of objects means the full subcategory with those objects.

First, let us define some special kinds of subcategories of $X$ with good properties in relation to the localizing subcategory $Y$.
\begin{definition}
Let $Z$ be a full subcategory of $X$. We say that $Z$ is \emph{$Y$-essentially stable} if for all $M \in Z$ we also have $\omega \pi(M) \in Z$.  We say that $Z$ is \emph{$Y$-torsionfree generated} if $Z$ is generated by the $Y$-torsionfree objects it contains.  If $Z$ is weakly closed, $Y$-essentially stable, and $Y$-torsionfree generated, we say that $Z$ is \emph{$Y$-weakly closed}.  Similarly, $Z$ is \emph{$Y$-closed} if $Z$ is closed, $Y$-essentially stable, and $Y$-torsionfree generated.
\end{definition}

The term ``$Y$-torsionfree generated" is self-explanatory.  Recall that $\omega \pi(M)$ is the largest essential extension of $M/\tau(M)$ by an object in $Y$.  So if $Z$ is closed under quotients (in particular if $Z$ is weakly closed), then $Z$ is $Y$-essentially stable if and only for any $Y$-torsionfree $M \in Z$, every essential extension of $M$ by an object in $Y$ remains in $Z$.  This explains the terminology ``$Y$-essentially stable".  The terms ``$Y$-weakly closed" and ``$Y$-closed" are explained by the following results.

\begin{proposition}
\label{prop:operations-on-Z}
Let $X$ be a Grothendieck category as above and let $Y$ be a localizing subcategory, with quotient category $\mc{X} = X/Y$.  Let $Z$ be weakly closed in $X$.
\begin{enumerate}
\item  $\pi(Z) = \{ \pi(M) \,| \,M \in Z \}$ is a weakly closed subcategory of $\mc{X}$.
\item If $\mc{Z}$ is weakly closed in $\mc{X}$, then $\pi^{-1}(\mc{Z}) = \{ M \in X \, | \, \pi(M) \in \mc{Z} \}$ is weakly closed in $X$ and $Y$-essentially stable.  
\item  $\wt{Z} = \pi^{-1}(\pi(Z)) = \{ M \in X \,|\, \pi(M) \in \pi(Z) \}$ is weakly closed in $X$ and $Y$-essentially stable.
\item The subcategory $Z'$ generated by all $Y$-torsionfree objects in $Z$ is weakly closed in $X$ and $Y$-torsionfree generated.  $Z'$ is $Y$-essentially stable if and only if $Z$ is.
\item $\wh{Z} = (\wt{Z})'$ is $Y$-weakly closed in $X$.
\end{enumerate} 
\end{proposition}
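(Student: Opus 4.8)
The plan is to prove parts (1) and (2) directly, to deduce (3) formally from (1)+(2), to prove (4) using the criterion for $Y$-essential stability recorded in the remarks just before the statement, and then to obtain (5) by applying (4) to the subcategory $\wt Z$ supplied by (3). The only tools I expect to need are: exactness of $\pi$ (so $\pi$ preserves monos, quotients, and preimages); the fact that $\pi$ preserves small direct sums; the counit isomorphism $v\colon \pi\omega \xrightarrow{\sim} \on{id}_{\mc X}$ (equivalently, via the triangle identity $v_{\pi M}\circ\pi(u_M)=\on{id}$, that $\pi(u_M)$ is an isomorphism for every $M$); and the explicit description of $\omega\pi$ from Section~\ref{sec:background} (mod out the $Y$-torsion, then take the largest essential extension with $Y$-torsion quotient).

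For (1), closure under direct sums is immediate: $\bigoplus_\alpha \pi(M_\alpha) = \pi(\bigoplus_\alpha M_\alpha)$, which lies in $\pi(Z)$ when each $M_\alpha \in Z$. For closure under subobjects, the one nonformal point is that every subobject $\mc N$ of $\pi(M)$ has the form $\pi(N)$ for a subobject $N \subseteq M$; I will take $N = u_M^{-1}(\omega\mc N)$ (using that $\omega$ is left exact, so $\omega\mc N \hookrightarrow \omega\pi M$), and then check that applying the exact functor $\pi$ to $N$ recovers $\mc N$ inside $\pi(M)$, using $\pi\omega\mc N \cong \mc N$ and that $\pi(u_M)$ is an isomorphism. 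Given $M \in Z$, such an $N$ is a subobject of $M$, hence in $Z$, so $\mc N = \pi(N) \in \pi(Z)$; and any quotient is then $\pi(M)/\mc N = \pi(M)/\pi(N) = \pi(M/N)$ with $M/N \in Z$, so it too lies in $\pi(Z)$. For (2): if $\pi(M) \in \mc Z$, then by exactness and direct-sum-preservation of $\pi$ the same holds for $\pi$ of any subobject, quotient, or small direct sum formed from objects of $\pi^{-1}(\mc Z)$, so $\pi^{-1}(\mc Z)$ is weakly closed; and $\pi(\omega\pi(M)) = \pi\omega(\pi M) \cong \pi M \in \mc Z$ shows it is $Y$-essentially stable. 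Part (3) is now immediate: $\pi(Z)$ is weakly closed in $\mc X$ by (1), hence $\wt Z = \pi^{-1}(\pi(Z))$ is weakly closed and $Y$-essentially stable by (2).

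For (4): the subcategory $Z'$ generated by the $Y$-torsionfree objects of $Z$ means the smallest weakly closed subcategory containing them, i.e. all subquotients of small direct sums of them; this is automatically weakly closed, and since $Z$ is itself a weakly closed subcategory containing those objects we get $Z' \subseteq Z$. Therefore $Z$ and $Z'$ have exactly the same $Y$-torsionfree objects. This at once gives that $Z'$ is $Y$-torsionfree generated, and — using the criterion that a weakly closed subcategory is $Y$-essentially stable if and only if every essential extension of a $Y$-torsionfree object in it by an object of $Y$ stays in it — that $Z'$ is $Y$-essentially stable iff $Z$ is: an essential extension $M \hookrightarrow N$ of a $Y$-torsionfree $M$ with $N/M \in Y$ is itself $Y$-torsionfree, since $\tau(N)\cap M$ is a subobject of $M$ lying in $Y$, hence $0$, and $M$ is essential in $N$; so such an $N$ lies in $Z$ iff it lies in $Z'$, and the criterion for $Z$ and for $Z'$ reduces to the same condition.

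Finally (5): by (3), $\wt Z = \pi^{-1}(\pi(Z))$ is weakly closed and $Y$-essentially stable, so applying (4) to the weakly closed subcategory $\wt Z$ shows that $\wh Z = (\wt Z)'$ is weakly closed, $Y$-torsionfree generated, and (since $\wt Z$ is $Y$-essentially stable) $Y$-essentially stable — that is, $\wh Z$ is $Y$-weakly closed. The main obstacle in the whole argument is the single nonformal input in (1), that subobjects of $\pi(M)$ lift to subobjects of $M$; this is a standard feature of Gabriel quotients but must be set up carefully (either via $\omega$ and the triangle identity as above, or via the lattice isomorphism between subobjects of $\pi M$ and $Y$-saturated subobjects of $M$). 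Everything else is formal bookkeeping with exactness, the counit isomorphism, and the behaviour of $Y$-torsionfreeness under essential extensions.
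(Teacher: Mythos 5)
Your proof is correct and follows essentially the same overall structure as the paper's, but with two localized variations worth noting, plus one small glossed-over point.

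In part (1), the paper simply cites Kanda's \cite[Proposition 4.10]{Kan15} for the statement that every subobject of $\pi(M)$ lifts to a subobject of $M$; you instead construct the lift explicitly as $N = u_M^{-1}(\omega\mc{N})$ and verify via the triangle identity and naturality of the counit that $\pi(N)=\mc{N}$. Your construction is correct and more self-contained, at the cost of the bookkeeping you flag.

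In part (4), the paper argues the equivalence ``$Z'$ is $Y$-essentially stable $\iff$ $Z$ is'' directly with $\omega\pi$: forward, $\omega\pi(M)$ lands in $Z$ and is $Y$-torsionfree, hence in $Z'$; backward, $M/\tau(M)\in Z'$, so $\omega\pi(M)=\omega\pi(M/\tau(M))\in Z'\subseteq Z$. You instead invoke the essential-extension criterion stated in the text just before the proposition and use the observation that $Z$ and $Z'$ have the same $Y$-torsionfree objects (plus the fact that an essential extension of a $Y$-torsionfree object by an object of $Y$ is again $Y$-torsionfree). Both routes are valid.

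The one point you elide: you take ``$Z'$ generated by the $Y$-torsionfree objects of $Z$'' to mean the \emph{weak closure} of those objects, which makes ``$Z'$ is weakly closed'' and ``$Z'$ is $Y$-torsionfree generated'' essentially tautologous. The paper's convention (used e.g.\ in the proof of Theorem~\ref{thm:weakly-closed}(1)) is that ``generated'' means all \emph{quotients} of direct sums, so the paper must prove that this quotient-closure is additionally closed under subobjects: given $L\subseteq M\cong N/P$ with $N=\bigoplus K_\alpha$ a direct sum of $Y$-torsionfree objects of $Z$, write $L\cong Q/P$ for $P\subseteq Q\subseteq N$, and note that $Q$ is again a $Y$-torsionfree object of $Z$. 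Your interpretation gives the same subcategory (precisely because this lifting argument succeeds), but to match the proposition as stated you should either adopt the paper's convention and supply this one-line lifting step, or explicitly note that the two closures coincide here.
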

\begin{proof}
(1) Let $N \in Z$ and write $\pi(N) = \mc{N}$.  If $\mc{M}$ is a subobject of $\mc{N}$, then there is a subobject $M \subseteq N$ such that $\pi(M) = \mc{M}$ \cite[Proposition 4.10]{Kan15}.  Since $M \in Z$, $\mc{M} \in \mc{Z}$.  Then $\mc{N}/\mc{M}$ also represents an arbitrary quotient object of $\mc{N}$.  Now $\mc{N}/\mc{M} = \pi(N)/\pi(M) = \pi(N/M)$ as $\pi$ is exact, and $N/M \in Z$ implies $\mc{N}/\mc{M} \in \mc{Z}$.  We see that $\mc{Z}$ is closed under subquotients.  Given a collection $\{ \mc{M}_{\alpha} \}$ of objects in $\mc{Z}$,  where $\mc{M}_{\alpha} = \pi(M_{\alpha})$ for $M_{\alpha} \in Z$, then 
$\bigoplus \mc{M}_{\alpha} = \bigoplus \pi(M_{\alpha}) \cong \pi(\bigoplus M_{\alpha})$, so $\mc{Z}$ is closed under direct sums because $Z$ is.  

(2) That $\pi^{-1}(\mc{Z})$ is weakly closed is immediate from the facts that $\pi$ is exact and preserves direct sums.
If $M \in \pi^{-1}(\mc{Z})$ then $\pi(\omega\pi(M)) = \pi(M) \in \mc{Z}$
so $\omega \pi(M) \in \pi^{-1}(\mc{Z})$.  Thus $\pi^{-1}(\mc{Z})$ is $Y$-essentially stable.  

(3) This is immediate from (1) and (2).
 
(4) $Z'$ is certainly $Y$-torsionfree generated. Since $Z'$ is generated by a set of objects it is closed under direct sums and quotient objects.  Let $M \in Z'$, so there is an epimorphism $f: \bigoplus K_{\alpha} \to M$ where each $K_{\alpha} \in Z$ is $Y$-torsionfree.  Then $N = \bigoplus K_{\alpha}$ is also $Y$-torsionfree.  Let $P = \ker f$ so that $M \cong N/P$.  If $L$ is a subobject of $M$, then $L \cong Q/P$ for some $P \subseteq Q \subseteq N$, where $Q \in Z$ is again $Y$-torsionfree.  Thus $L$ is in $Z'$ and $Z$ is closed under subobjects.  

Suppose that $Z$ is $Y$-essentially stable.  If $M \in Z' \subseteq Z$, then $\omega \pi(M)  \in Z$.  Note that $\omega \pi(M)$ is $Y$-torsionfree (as is any object in the image of $\omega$), so $\omega \pi(M) \in Z'$ and $Z'$ is $Y$-essentially stable. Conversely, if $Z'$ is $Y$-essentially stable and $M \in Z$ then $M/\tau(M) \in Z'$, so $\omega\pi(M) = \omega\pi(M/\tau(M)) \in Z' \subseteq Z$.

(5) This follows immediately from (3) and (4). 
\end{proof}

In the notation of the preceding proposition, we have the following diagram of weakly closed subcategories associated to $Z$, all of which lie over the same weakly closed subcategory $\mc{Z} = \pi(Z)$ of $\mc{X}$:
\[
\xymatrix{ & \wt{Z} \ar@{-}[dr] &  \\ Z \ar@{-}[ur] & & \wh{Z} = (\wt{Z})' \ar@{-}[dl] \\ & Z' \ar@{-}[ul] & }
\]

\begin{theorem}
\label{thm:quotientweaklyclosed}
Let $X$ be a Grothendieck category with localizing subcategory $Y$, and define $\mc{X} = X/Y$. 
In the notation of Proposition~\ref{prop:operations-on-Z}, there are inverse bijections
\begin{align*}
\{ Y \text{-weakly closed subcategories }\ Z \subseteq X \}  & \longleftrightarrow  \{\text{Weakly closed subcategories}\ \mc{Z} \subseteq \mc{X} \}  \\
Z & \overset{\phi}{\longrightarrow} \pi(Z) \\
(\pi^{-1}(\mc{Z}))' &\overset{\psi}{\longleftarrow} \mc{Z}.
\end{align*}
\end{theorem}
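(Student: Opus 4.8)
The plan is to check that $\phi$ and $\psi$ are well defined and then that they are mutually inverse, leaning on Proposition~\ref{prop:operations-on-Z} at essentially every step. That $\phi(Z)=\pi(Z)$ is weakly closed in $\mc{X}$ is exactly Proposition~\ref{prop:operations-on-Z}(1). For $\psi$, set $Z_0=\pi^{-1}(\mc{Z})$: by part (2) it is weakly closed and $Y$-essentially stable, and then by part (4) the subcategory $Z_0'=\psi(\mc{Z})$ is weakly closed, $Y$-torsionfree generated, and (since $Z_0$ is) $Y$-essentially stable, i.e.\ $Y$-weakly closed. I would remark that passing to $Z_0'$ is genuinely necessary here: $\pi^{-1}(\mc{Z})$ always contains every object of $Y$ and so is never $Y$-torsionfree generated unless $Y=0$.

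Next I would show $\phi\circ\psi=\operatorname{id}$, that is, $\pi(Z_0')=\mc{Z}$ for every weakly closed $\mc{Z}\subseteq\mc{X}$. The inclusion $\pi(Z_0')\subseteq\mc{Z}$ is immediate because $Z_0'\subseteq Z_0=\pi^{-1}(\mc{Z})$. For the reverse inclusion, given $\mc{M}\in\mc{Z}$, the object $\omega(\mc{M})$ is $Y$-torsionfree (everything in the image of $\omega$ is), and $\pi\omega(\mc{M})\cong\mc{M}$ since the counit is an isomorphism; hence $\omega(\mc{M})$ is a $Y$-torsionfree object of $Z_0$, so it lies in $Z_0'$, and therefore $\mc{M}\cong\pi\omega(\mc{M})\in\pi(Z_0')$.

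The main work is $\psi\circ\phi=\operatorname{id}$: for $Y$-weakly closed $Z$ I must show $(\pi^{-1}(\pi(Z)))'=Z$, which in the notation of Proposition~\ref{prop:operations-on-Z} is $\wh{Z}=Z$. For $Z\subseteq\wh{Z}$: since $Z\subseteq\wt{Z}=\pi^{-1}(\pi Z)$, every $Y$-torsionfree object of $Z$ is a $Y$-torsionfree object of $\wt{Z}$ and hence lies in $\wh{Z}=(\wt{Z})'$; because $Z$ is $Y$-torsionfree generated these objects generate $Z$, and $\wh{Z}$ is weakly closed, so $Z\subseteq\wh{Z}$. For $\wh{Z}\subseteq Z$ it suffices to show every $Y$-torsionfree $M\in\wt{Z}$ lies in $Z$. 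Such an $M$ satisfies $\pi(M)\in\pi(Z)$, so $\pi(M)\cong\pi(N)$ for some $N\in Z$; applying $\omega$ gives $\omega\pi(M)\cong\omega\pi(N)$, and $\omega\pi(N)\in Z$ because $Z$ is $Y$-essentially stable. Since $M$ is $Y$-torsionfree, the unit $u_M\colon M\to\omega\pi(M)$ is the inclusion of $M$ into an essential extension by an object of $Y$, in particular a monomorphism, so $M$ is a subobject of an object of $Z$ and hence lies in $Z$ by weak closedness.

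I expect this last inclusion to be the only real obstacle, and it is where one sees why both defining properties of ``$Y$-weakly closed'' are needed: $Y$-torsionfree generation to pin $Z$ down from above by $\wh{Z}$, and $Y$-essential stability together with the explicit description of $u_M$ on $Y$-torsionfree objects to pin it down from below. Everything else is a formal consequence of the exactness of $\pi$, the fact that $\omega$ is a right adjoint, and the isomorphism $\pi\omega\cong\operatorname{id}_{\mc{X}}$; in particular the (AB4*) hypothesis plays no role at this level of generality.
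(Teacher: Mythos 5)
Your argument is correct and follows essentially the same route as the paper's proof: well-definedness of both maps is read off from Proposition~\ref{prop:operations-on-Z}, the identity $\phi\psi=\operatorname{id}$ comes from the fact that $\omega(\mc{M})$ is $Y$-torsionfree with $\pi\omega(\mc{M})\cong\mc{M}$, and the hard direction $\psi\phi=\operatorname{id}$ uses $Y$-torsionfree generation for $Z\subseteq\wh{Z}$ and uses $Y$-essential stability plus the embedding $M\hookrightarrow\omega\pi(M)$ for a $Y$-torsionfree $M$ to get $\wh{Z}\subseteq Z$. The only structural difference is cosmetic: you argue $\wh{Z}\subseteq Z$ by placing the \emph{generators} of $\wh{Z}$ (the $Y$-torsionfree objects of $\wt{Z}$) inside $Z$, whereas the paper shows $Z$ and $\wh{Z}$ have the same class of $Y$-torsionfree objects; these amount to the same thing.

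However, your side remark that ``$\pi^{-1}(\mc{Z})$ always contains every object of $Y$ and so is never $Y$-torsionfree generated unless $Y=0$'' is false. Take $X=\rcatMod\mathbb{Z}$, let $Y$ be the localizing subcategory of torsion abelian groups, and let $\mc{Z}=X/Y$; then $\pi^{-1}(\mc{Z})=X$, which contains all of $Y$ but is generated by the $Y$-torsionfree object $\mathbb{Z}$, hence \emph{is} $Y$-torsionfree generated. The correct observation is only that $\pi^{-1}(\mc{Z})$ need not be $Y$-torsionfree generated in general (for instance $\mc{Z}=0$ gives $\pi^{-1}(\mc{Z})=Y$, which is $Y$-torsionfree generated only if $Y=0$), so passing to $(\pi^{-1}(\mc{Z}))'$ is indeed necessary in general --- but not for the reason you state. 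Since this remark plays no role in the proof itself, it does not undermine the argument.
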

\begin{proof}
It is easy to see that the maps $\phi$ and $\psi$ are well-defined, using Proposition~\ref{prop:operations-on-Z}.

Let $Z \subseteq X$ be $Y$-weakly closed, and let $\wh{Z} = \psi(\phi(Z))$ be generated by all $Y$-torsionfree objects $N$ in $\pi^{-1}(\pi(Z))$.  We claim that $Z = \wh{Z}$.  Since both $Z$ and $\wh{Z}$ are generated by the $Y$-torsionfree objects they contain, it is enough to check that $Z$ and $\wh{Z}$ have the same $Y$-torsionfree objects.  If $N \in Z$ is $Y$-torsionfree, then $N \in \wh{Z}$ by definition.  Conversely, if $P \in \wh{Z}$ is $Y$-torsionfree, then $\pi(P) = \pi(N)$ for some $N \in Z$.  Then $\omega \pi(P) = \omega \pi(N) \in Z$ because $N \in Z$ and $Z$ is $Y$-essentially stable.  Finally, $P$ may be identified with a subobject of $\omega \pi(P)$ since $P$ is $Y$-torsionfree, and thus $P \in Z$.  So $Z = \wh{Z}$ as claimed.

Next, let $\mc{Z} \subseteq \mc{X}$ be weakly closed, and let $\mc{W} = \phi(\psi(\mc{Z})) = \pi(\pi^{-1}(\mc{Z})')$.
 Obviously $\mc{W} \subseteq \mc{Z}$.  On the other hand, if $\mc{M} \in \mc{Z}$ then $\mc{M} = \pi(M)$ where 
 $M = \omega(\mc{M})$ is $Y$-torsionfree, and so $M \in \pi^{-1}(\mc{Z})'$; thus $\mc{M} = \pi(M) \in \mc{W}$.  We conclude that $\mc{Z} = \mc{W}$.  
\end{proof}

Next, we specialize the results above to closed subcategories.  Note that we require $X$ to satisfy (AB4*), that is, that products are exact in $X$, as well as an additional hypothesis on the closed subcategory $Z$ in part (1) of the following result, compared to the weakly closed case.  
\begin{theorem}
\label{thm:quotientclosed}
Let $X$ be a (AB4*) Grothendieck category and let $Y$ be a localizing subcategory, with quotient category $\mc{X} = X/Y$. 
\begin{enumerate}
\item If $Z$ is a $Y$-essentially stable closed subcategory of $X$, then $\pi(Z)$ is closed 
in $\mc{X}$.  
\item If $\mc{Z}$ is closed in $\mc{X}$ then $\wh{Z} = (\pi^{-1}(\mc{Z}))'$ is $Y$-closed in $X$.
\item The bijection of Theorem~\ref{thm:quotientweaklyclosed}(1) restricts to a bijection 
\[
\{ Y \text{-closed subcategories }\ Z \subseteq X \}   \longleftrightarrow  \{\text{closed subcategories}\ \mc{Z} \subseteq \mc{X} \}.  
\]
\end{enumerate}
\end{theorem}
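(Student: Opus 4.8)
The plan is to treat the three parts in order, deriving part~(3) formally from parts~(1) and~(2) together with the weakly closed bijection of Theorem~\ref{thm:quotientweaklyclosed}; essentially all of the work sits in part~(2), which is also the only place the (AB4*) hypothesis is really needed. For part~(1), Proposition~\ref{prop:operations-on-Z}(1) already gives that $\pi(Z)$ is weakly closed in $\mc{X}$, so it remains only to check closure under products. Recall from Section~\ref{sec:background} that a product in $\mc{X}$ is computed as $\prod_\alpha \mc{M}_\alpha = \pi\bigl(\prod_\alpha \omega(\mc{M}_\alpha)\bigr)$. If $\mc{M}_\alpha = \pi(M_\alpha)$ with $M_\alpha \in Z$, then $Y$-essential stability of $Z$ gives $\omega\pi(M_\alpha) \in Z$, closure of $Z$ under products gives $\prod_\alpha \omega\pi(M_\alpha) \in Z$, and applying $\pi$ shows $\prod_\alpha \mc{M}_\alpha \in \pi(Z)$; note this uses neither (AB4*) nor anything about $\mc{X}$ beyond its being Grothendieck.

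For part~(2), Theorem~\ref{thm:quotientweaklyclosed} already shows $\wh{Z} = (\pi^{-1}(\mc{Z}))'$ is $Y$-weakly closed, i.e.\ weakly closed, $Y$-essentially stable and $Y$-torsionfree generated, so the only new point is that $\wh{Z}$ is closed under products. Fix objects $\{M_\alpha\}$ in $\wh{Z}$. I would first reduce to the torsionfree case: since $\wh{Z}$ is $Y$-torsionfree generated, each $M_\alpha$ is a subquotient of a direct sum of $Y$-torsionfree objects of $\wh{Z}$ (by Proposition~\ref{prop:operations-on-Z}(4)), and such a direct sum is itself $Y$-torsionfree (it is a subobject of the corresponding product) and lies in $\wh{Z}$; so we may fix, for each $\alpha$, a $Y$-torsionfree object $L_\alpha \in \wh{Z}$ with subobjects $A''_\alpha \subseteq A'_\alpha \subseteq L_\alpha$ and an isomorphism $M_\alpha \cong A'_\alpha/A''_\alpha$. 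Next one checks $\prod_\alpha L_\alpha \in \wh{Z}$: it is $Y$-torsionfree, being a product of $Y$-torsionfree objects; and since each unit $u_{L_\alpha}\colon L_\alpha \to \omega\pi(L_\alpha)$ is a monomorphism (as $L_\alpha$ is $Y$-torsionfree), $\prod_\alpha L_\alpha$ embeds into $\prod_\alpha \omega\pi(L_\alpha)$, so applying the exact functor $\pi$ and using the description $\prod_\alpha \pi(L_\alpha) = \pi\bigl(\prod_\alpha \omega\pi(L_\alpha)\bigr)$ of products in $\mc{X}$ shows $\pi(\prod_\alpha L_\alpha)$ embeds into $\prod_\alpha \pi(L_\alpha)$; the latter lies in $\mc{Z}$ since $\mc{Z}$ is closed and each $\pi(L_\alpha) \in \mc{Z}$, and as $\mc{Z}$ is weakly closed this forces $\pi(\prod_\alpha L_\alpha) \in \mc{Z}$, so $\prod_\alpha L_\alpha$ is a $Y$-torsionfree object of $\pi^{-1}(\mc{Z})$ and hence lies in $\wh{Z}$.

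The final step of part~(2) is where (AB4*) enters. Since products are exact in $X$, applying $\prod_\alpha$ to the short exact sequences $0 \to A''_\alpha \to A'_\alpha \to M_\alpha \to 0$ yields a short exact sequence $0 \to \prod_\alpha A''_\alpha \to \prod_\alpha A'_\alpha \to \prod_\alpha M_\alpha \to 0$, which exhibits $\prod_\alpha M_\alpha$ as a quotient of the subobject $\prod_\alpha A'_\alpha$ of $\prod_\alpha L_\alpha$ ---that is, as a subquotient of $\prod_\alpha L_\alpha \in \wh{Z}$. Since $\wh{Z}$ is weakly closed it is closed under subquotients, so $\prod_\alpha M_\alpha \in \wh{Z}$; thus $\wh{Z}$ is closed, and being also $Y$-essentially stable and $Y$-torsionfree generated it is $Y$-closed. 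I expect this last step to be the main obstacle: without (AB4*) the map $\prod_\alpha A'_\alpha \to \prod_\alpha M_\alpha$ need not be epic, and then there is no apparent control on the $Y$-torsion of $\prod_\alpha M_\alpha$, which is exactly the failure illustrated by Example~\ref{ex:weird}.

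Finally, part~(3) is purely formal. The maps $\phi(Z) = \pi(Z)$ and $\psi(\mc{Z}) = (\pi^{-1}(\mc{Z}))' = \wh{Z}$ of Theorem~\ref{thm:quotientweaklyclosed} are mutually inverse bijections between $Y$-weakly closed subcategories of $X$ and weakly closed subcategories of $\mc{X}$. Every $Y$-closed subcategory is in particular $Y$-weakly closed; if $Z$ is $Y$-closed then $\phi(Z)$ is closed by part~(1), and if $\mc{Z}$ is closed then $\psi(\mc{Z})$ is $Y$-closed by part~(2). Hence $\phi$ and $\psi$ restrict to mutually inverse bijections between the $Y$-closed subcategories of $X$ and the closed subcategories of $\mc{X}$, which is the assertion of part~(3).
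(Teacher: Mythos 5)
Your proof is correct and follows essentially the same line of argument as the paper: part (1) is handled by the same product formula $\prod_\alpha \pi(M_\alpha) = \pi(\prod_\alpha \omega\pi(M_\alpha))$, part (2) presents each $M_\alpha$ via a $Y$-torsionfree object in $\wh{Z}$, shows the product of those torsionfree objects lies in $\wh{Z}$ by embedding it into $\prod_\alpha \omega\pi(-)$ and comparing with the product in $\mc{X}$, and then invokes (AB4*) at exactly the same step to realize $\prod_\alpha M_\alpha$ as a subquotient of that product, and part (3) is the same formal deduction. The only cosmetic difference is that you present $M_\alpha$ as a subquotient $A'_\alpha/A''_\alpha$ of a torsionfree $L_\alpha$ rather than (as the paper does, via Proposition~\ref{prop:operations-on-Z}(4)) directly as a quotient of a torsionfree object, which is slightly less economical but equally valid.
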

\begin{proof}
(1)  By the correspondence of Theorem~\ref{thm:quotientweaklyclosed}, we know that $\mc{Z} = \pi(Z)$ is at least weakly closed in $\mc{X}$.  
Let $\{ M_{\alpha} \}$ be an arbitrary small collection of objects in $Z$, so $\{ \pi(M_{\alpha}) \}$ is an arbitrary small collection of objects in $\mc{Z}$.  Then $\prod_{\alpha} \pi(M_{\alpha}) = \pi( \prod_{\alpha} \omega\pi(M_{\alpha}))$.   By the $Y$-essentially stable assumption, $\omega \pi(M_{\alpha}) \in Z$ for all $\alpha$.  Since $Z$ is also closed under products, we see that $\prod \pi(M_{\alpha}) \in \mc{Z}$.  Thus $\mc{Z}$ is closed under products, so is a closed subcategory of $\mc{X}$.

(2) Again the correspondence of Theorem~\ref{thm:quotientweaklyclosed} implies that $\wh{Z}$ is at least $Y$-weakly closed in $X$.  Now suppose that $\{ M_{\alpha} \}$ is a collection of objects in $\wh{Z}$.  Since $\wh{Z}$ is generated by $Y$-torsionfree objects in $\pi^{-1}(\mc{Z})$ and a direct sum of $Y$-torsionfree objects in $Y$-torsionfree, $M_{\alpha} \cong N_{\alpha}/P_{\alpha}$ where $N_{\alpha}$ is $Y$-torsionfree with $\pi(N_{\alpha}) \in \mc{Z}$.  By exactness of products in $X$ (AB4*), $\prod_{\alpha} M_{\alpha} \cong (\prod_{\alpha} N_{\alpha})/(\prod_{\alpha} P_{\alpha})$.  Since $N_{\alpha}$ is $Y$-torsionfree, we identify $N_{\alpha}$ with a subobject of $\omega \pi(N_{\alpha})$.     Similarly, exactness of products shows that $\prod_{\alpha} N_{\alpha}$ can be viewed as a subobject of $\prod \omega \pi(N_{\alpha})$. Now $\pi(\prod \omega\pi(N_{\alpha})) = \prod \pi(N_{\alpha}) \in \mc{Z}$ because $\mc{Z}$ is closed under products.  Also, $\prod \omega\pi(N_{\alpha})$ is $Y$-torsionfree since each $\omega \pi(N_{\alpha})$ is.   We see that $\prod \omega\pi(N_{\alpha}) \in \wh{Z}$, and hence its subobject $\prod N_{\alpha}$ is in $\wh{Z}$ as well.  Finally, this implies the epimorphic image $\prod_{\alpha} M_{\alpha}$ is in $\wh{Z}$. So $\wh{Z}$ is closed under products and hence is $Y$-closed.

(3)  This follows immediately from (1) and (2).
\end{proof}

The $Y$-essentially stable property is rather subtle, and it is helpful to have several ways of describing it.  The following notion is relevant for this.
\begin{definition}\label{def:gabrielproduct}
Let $Z$ and $W$ be full subcategories of $X$.  The \emph{Gabriel product} $Z * W$ is the full subcategory of $X$ consisting of all objects $P$ such that there exists a short exact sequence 
$0 \to M \to P \to N \to 0$ where $M \in W$ and $N \in Z$.  
\end{definition}
Note that the order of the categories in the notation is chosen so that an object in $Z * W$ corresponds to 
an element of $\Ext^1_X(N,M)$ for some $N \in Z$, $M \in W$.  If $Z$ and $W$ are both weakly closed or both closed, then $Z * W$ has the same property; see \cite[Proposition 3.3.6]{VdB01}. 

\begin{proposition}
\label{prop:Y-essentiallystable}
Let $X$ be a Grothendieck category with localizing subcategory $Y$.  Let $Z$ be a weakly closed subcategory of $X$.  As above, let $\pi: X \to X/Y$ be the quotient map, let $Z'$ be the subcategory of $Z$ generated by its $Y$-torsionfree objects, and let $\wh{Z} = (\pi^{-1}(\pi(Z)))'$.
\begin{enumerate}
\item $\wh{Z} = (Y * Z)'$.
\item The following are equivalent:
\begin{enumerate}
    \item[(i)] $Z$ is $Y$-essentially stable.
    \item[(ii)] $Z' = \wh{Z}$.      
    \item[(iii)] $Y * Z \subseteq Z * Y$.
\end{enumerate}
\end{enumerate}
\end{proposition}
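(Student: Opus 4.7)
For part (1), the plan is to prove $\wh{Z} = (Y * Z)'$ by checking both inclusions on $Y$-torsionfree objects, which suffices since both sides are by construction $Y$-torsionfree generated. The inclusion $(Y * Z)' \subseteq \wh{Z}$ is immediate: if $P$ is $Y$-torsionfree and sits in $0 \to M \to P \to N \to 0$ with $M \in Z$ and $N \in Y$, then exactness of $\pi$ and $\pi(N) = 0$ give $\pi(P) \cong \pi(M) \in \pi(Z)$, so $P \in \pi^{-1}(\pi(Z))$ and hence $P \in \wh{Z}$. The reverse inclusion is the main construction: given $Y$-torsionfree $P$ with $\pi(P) = \pi(M)$ for some $M \in Z$, I would identify both $P$ and $M/\tau(M)$ as subobjects of their common envelope $\omega\pi(P) = \omega\pi(M/\tau(M))$ and form $Q = P + M/\tau(M)$ inside this envelope. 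Then $Q$ is $Y$-torsionfree as a subobject of $\omega\pi(M)$; we have $M/\tau(M) \in Z$ since $Z$ is weakly closed; and $Q/(M/\tau(M))$ embeds into the $Y$-torsion object $\omega\pi(M)/(M/\tau(M))$, exhibiting $Q$ as a $Y$-torsionfree object of $Y * Z$. Since $(Y * Z)'$ is weakly closed (by the cited Van den Bergh result, $Y * Z$ is weakly closed, and then Proposition~\ref{prop:operations-on-Z}(4) applies), it contains $Q$ and hence its subobject $P$.

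For (i) $\Leftrightarrow$ (ii), both directions should fall out of part (1). Assuming (i), any $Y$-torsionfree generator $P$ of $\wh{Z}$ satisfies $\pi(P) = \pi(M)$ for some $M \in Z$, hence embeds in $\omega\pi(M) \in Z$, placing $P$ in $Z'$ and yielding $\wh{Z} \subseteq Z'$; the reverse inclusion $Z' \subseteq \wh{Z}$ is immediate. Assuming (ii), for any $M \in Z$ the object $\omega\pi(M)$ is $Y$-torsionfree with $\pi(\omega\pi(M)) = \pi(M) \in \pi(Z)$, so $\omega\pi(M) \in \wh{Z} = Z' \subseteq Z$, giving $Y$-essential stability.

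For (i) $\Leftrightarrow$ (iii), I would argue directly. For (i) $\Rightarrow$ (iii), given $P \in Y * Z$ I would use the canonical sequence $0 \to \tau(P) \to P \to P/\tau(P) \to 0$, and observe $\pi(P/\tau(P)) = \pi(P) \in \pi(Z)$; since $P/\tau(P)$ is $Y$-torsionfree it embeds in some $\omega\pi(M) \in Z$ by (i), so $P/\tau(P) \in Z$ and $P \in Z * Y$. For (iii) $\Rightarrow$ (i), for $M \in Z$ the sequence $0 \to M/\tau(M) \to \omega\pi(M) \to \omega\pi(M)/(M/\tau(M)) \to 0$ exhibits $\omega\pi(M) \in Y * Z$, so by hypothesis $\omega\pi(M) \in Z * Y$; but $\omega\pi(M)$ is $Y$-torsionfree, forcing the $Y$-subobject in any such extension to vanish and yielding $\omega\pi(M) \in Z$. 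The step I expect to be the main obstacle is the construction of $Q$ in part (1): one must pick the correct ambient object in which to compare a $Y$-torsionfree $P$ with a witness $M \in Z$ to $\pi(P) \in \pi(Z)$, and the essential envelope $\omega\pi$ of either is the natural choice. Once this comparison is in place, the remaining implications become short manipulations of the canonical $0 \to \tau(-) \to - \to -/\tau(-) \to 0$ sequence together with the fact that $\omega\pi$ takes values in the $Y$-torsionfree objects.
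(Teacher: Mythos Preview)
Your proposal is correct and follows essentially the same approach as the paper. The only notable variation is in part (1): where you form the \emph{sum} $Q = P + M/\tau(M)$ inside the common envelope $\omega\pi(M)$ and then pass to the subobject $P$ using that $(Y*Z)'$ is weakly closed, the paper instead forms the \emph{intersection} $P \cap (M/\tau(M))$ and observes directly that this exhibits $P$ itself as an object of $Y*Z$, avoiding any appeal to weak closedness of $(Y*Z)'$; in part (2) the paper runs the cycle (i) $\Rightarrow$ (ii) $\Rightarrow$ (iii) $\Rightarrow$ (i) rather than your two separate equivalences, but the individual arguments are the same.
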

\begin{proof}
(1) It is enough to show that $Y * Z$ and $\pi^{-1}(\pi(Z))$ have the same set of $Y$-torsionfree objects.
Suppose that $M$ is a $Y$-torsionfree object in $Y * Z$, so there is $L \subseteq M$ with $L \in Z$ and $M/L \in Y$.  Then $\pi(M) = \pi(L) \in \pi(Z)$, so $M \in \pi^{-1}(\pi(Z))$.  Conversely, suppose that $M$ is a $Y$-torsionfree object such that $\pi(M) \in \pi(Z)$.  There is some $N \in Z$ with $\pi(N) = \pi(M)$.  Noting that $\pi(N/\tau(N)) = \pi(N)$, we can assume that $N$ is $Y$-torsionfree.  Then $\omega \pi(M) = \omega \pi(N)$ and both $M$ and $N$ can be identified with subobjects of $\omega \pi(N)$.  As such, we have $M \cap N \subseteq M$ with $M \cap N \in Z$ and $M/(M \cap N) \cong (M+N)/N \subseteq (\omega \pi(N))/N \in Y$, so $M \in Y * Z$.

(2)
$(i) \implies (ii)$:  By (1), we need to show $Z' = (Y * Z)'$. 
If $M$ is $Y$-torsionfree and in $Y * Z$, choose $0 \subseteq L \subseteq M$ with $L \in Z$ and $M/L \in Y$.  The extension $L \subseteq M$ is essential, for if $N \subseteq M$ with $N \cap L = 0$, then $N$ embeds in $M/L \in Y$, but $N$ is $Y$-torsionfree, so $N = 0$.  Since $\omega\pi(L)$ is the unique largest essential extension of $L$ by an element of $Y$, $M$ embeds in $\omega \pi(L)$.  By hypothesis, since $L \in Z$, we have $\omega \pi(L) \in Z$, so $M \in Z$.  This shows that every 
generator of $(Y * Z)'$ is a generator of $Z'$, so $(Y * Z)' \subseteq Z'$.  The other inclusion is immediate.

$(ii) \implies (iii)$: Using part (1) again, the assumption is $(Y * Z)' = Z'$.
Let $M \in Y * Z$.  Then $M/\tau(M) \in (Y * Z)' = Z' \subseteq Z$, so the inclusion $0 \subseteq \tau(M) \subseteq M$ shows that $M \in Z * Y$.

$(iii) \implies (i)$: Let $N \in Z$.  Now $\omega \pi(N)$ is the largest essential extension of $N' = N/\tau(N) \in Z$ by an object in $Y$.  So $\omega \pi(N) \in Y * Z$.  Then $\omega \pi(N) \in Z * Y$, and since $\omega \pi(N)$ is $Y$-torsionfree, $\omega \pi(N) \in Z$.  This shows that $Z$ is $Y$-essentially stable by definition.
\end{proof}

We have now seen that to understand (weakly) closed subcategories of a quotient category $X/Y$, we can instead study $Y$-(weakly) closed subcategories of $X$.  In Section~\ref{sec:generators} we saw how to describe (weakly) closed subcategories of $X$ in terms of filter systems in a set of generators.  In the analysis of examples, it can be useful to interpret the properties of being $Y$-essentially stable and $Y$-torsionfree generated in terms of these filter systems, which we do in the next results. 

\begin{definition} 
\label{def:saturation}
Given a subobject $J$ of a fixed generator $O_{\alpha}$ of $X$ we define the \emph{$Y$-saturation} of $J$ (inside $O_{\alpha}$) to be the subobject $\wt{J}$ of $O_{\alpha}$ such that $\wt{J}/J = \tau(O_{\alpha}/J)$.  We say that $J$ is \emph{saturated} (inside $O_{\alpha}$) if $J = \wt{J}$.
\end{definition}
It is elementary to see that if $I \subseteq J \subseteq O_{\alpha}$, then $\wt{I} \subseteq \wt{J}$.  

\begin{proposition}
\label{prop:Y-reduced}
Let $X$ be a Grothendieck category with a small set of compact projective generators $S = \{ O_{\alpha} \}$.  Let $Y$ be a localizing subcategory of $X$.  Let $Z$ be a weakly closed subcategory of $X$ corresponding to the filter system $\{ \mc{F}_{\alpha} \}$ in the set of generators $S$.  Let $Z'$ be generated by the $Y$-torsionfree objects in $Z$, and let $\wh{Z} = (\pi^{-1}(\pi(Z)))'$.
 \begin{enumerate}
    \item The filter system corresponding to $Z'$ is $\{ \mc{F}'_{\alpha} \}$, 
where 
\[
\mc{F}'_{\alpha} = \{ J \subseteq O_{\alpha} | \wt{I} \subseteq J\ \text{for some}\ I \in \mc{F}_{\alpha} \}.
\]

\item The filter system corresponding to $\wh{Z}$ is $\{ \wh{\mc{F}}_{\alpha} \}$, where 
\[
\wh{\mc{F}}_{\alpha} = \{ J \subseteq O_{\alpha} \, | \, \text{there is}\ I \subseteq L \subseteq O_{\alpha}\ \text{with}\ O_{\alpha}/L \in Y, L/I \in Z, \text{and}\ \wt{I} \subseteq J\}.
\]
\end{enumerate}
\end{proposition}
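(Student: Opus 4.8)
The plan is to compute directly, via the bijection of Theorem~\ref{thm:weakly-closed}, the filter system $\{\, J \subseteq O_\alpha \mid O_\alpha/J \in V \,\}$ attached to each of the weakly closed subcategories $V$ in play. The recurring tool is the tautology that $O_\alpha/\wt J$ is the largest $Y$-torsionfree quotient of $O_\alpha/J$ (since $\wt J/J = \tau(O_\alpha/J)$), so a subobject $I \subseteq O_\alpha$ is saturated precisely when $O_\alpha/I$ is $Y$-torsionfree; together with compactness and projectivity of the generators $O_\alpha$, this is what makes everything go.

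For (1): by Proposition~\ref{prop:operations-on-Z}(4), $Z'$ is weakly closed, so it corresponds to the filter system $\mc{G}_\alpha = \{\, J \subseteq O_\alpha \mid O_\alpha/J \in Z' \,\}$, and I must show $\mc{G}_\alpha = \mc{F}'_\alpha$. For $\mc{F}'_\alpha \subseteq \mc{G}_\alpha$: if $I \in \mc{F}_\alpha$ then $O_\alpha/I \in Z$, hence its $Y$-torsionfree quotient $O_\alpha/\wt I$ lies in $Z$ and is thus one of the generating objects of $Z'$; so $\wt I \in \mc{G}_\alpha$, and any $J \supseteq \wt I$ lies in $\mc{G}_\alpha$ since a filter is closed under specialization. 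For $\mc{G}_\alpha \subseteq \mc{F}'_\alpha$: given $O_\alpha/J \in Z'$, use the description of $Z'$ from the proof of Proposition~\ref{prop:operations-on-Z}(4) to write $O_\alpha/J$ as a quotient of a direct sum $\bigoplus_k K_k$ of $Y$-torsionfree objects of $Z$; lift the canonical epimorphism $O_\alpha \to O_\alpha/J$ through $\bigoplus_k K_k$ using projectivity of $O_\alpha$, then use compactness of $O_\alpha$ to replace $\bigoplus_k K_k$ by a finite subsum $N$, which is still $Y$-torsionfree and lies in $Z$. If $I$ is the kernel of the resulting map $O_\alpha \to N$, then $I \subseteq J$, and $O_\alpha/I$ embeds in $N$, so $O_\alpha/I \in Z$ and is $Y$-torsionfree; hence $I \in \mc{F}_\alpha$ and $\wt I = I \subseteq J$, i.e. $J \in \mc{F}'_\alpha$.

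For (2): first invoke Proposition~\ref{prop:Y-essentiallystable}(1) to rewrite $\wh Z = (Y * Z)'$, noting that $Y * Z$ is weakly closed by \cite[Proposition 3.3.6]{VdB01}. Then identify the filter system $\{\mc{K}_\alpha\}$ of $Y * Z$: unwinding Definition~\ref{def:gabrielproduct}, $O_\alpha/I \in Y * Z$ iff $O_\alpha/I$ has a subobject $L/I \in Z$ with $(O_\alpha/I)/(L/I) \cong O_\alpha/L \in Y$, i.e. iff there is $I \subseteq L \subseteq O_\alpha$ with $L/I \in Z$ and $O_\alpha/L \in Y$. Finally apply part (1) to the weakly closed subcategory $Y * Z$ in place of $Z$: the filter system of $(Y * Z)' = \wh Z$ is $\{\, J \subseteq O_\alpha \mid \wt I \subseteq J \ \text{for some}\ I \in \mc{K}_\alpha \,\}$, and substituting the description of $\mc{K}_\alpha$ yields exactly $\wh{\mc{F}}_\alpha$.

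The verifications that $\mc{F}'_\alpha$ and $\wh{\mc{F}}_\alpha$ are filters and assemble into filter systems are automatic, since these sets have been identified as the filter systems of the weakly closed subcategories $Z'$ and $\wh Z$ through Theorem~\ref{thm:weakly-closed}, so I would not write them out. The one genuinely delicate step is the inclusion $\mc{G}_\alpha \subseteq \mc{F}'_\alpha$ in part (1): passing from the abstract membership $O_\alpha/J \in Z'$ to a single \emph{saturated} $I \in \mc{F}_\alpha$ with $\wt I \subseteq J$ is precisely where compactness and projectivity of the generators are used in an essential way. Once that is in hand, part (2) is a purely formal consequence of part (1).
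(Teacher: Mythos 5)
Your proof is correct and follows essentially the same route as the paper's: the reduction of (2) to (1) via $\wh Z = (Y * Z)'$, the identification of the filter system of $Y * Z$, and the two inclusions in (1) are all as in the paper. The one point worth correcting is your closing remark that compactness is ``used in an essential way'' in the inclusion $\mc{G}_\alpha \subseteq \mc{F}'_\alpha$: the paper avoids compactness there entirely, observing that the full direct sum $\bigoplus_k K_k$ of $Y$-torsionfree objects of $Z$ is itself $Y$-torsionfree (as a direct sum of torsionfree objects) and in $Z$ (since $Z$ is closed under direct sums), so one may take $I$ to be the kernel of the projective lift $O_\alpha \to \bigoplus_k K_k$ directly, and $O_\alpha/I$ is a subobject of that direct sum, hence in $Z$ and torsionfree. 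Your detour through a finite subsum is harmless but superfluous; only projectivity of $O_\alpha$ and the closure properties of $Z$ are needed in this proposition.
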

\begin{proof}
(1) If $J \in \mc{F}'_{\alpha}$, say $\wt{I} \subseteq J \subseteq O_{\alpha}$ with $I \in \mc{F}_{\alpha}$, then 
$O_{\alpha}/\wt{I}$ is in $Z$ and is $Y$-torsionfree.  Since $O_{\alpha}/J$ is an epimorphic image of $O_{\alpha}/\wt{I}$, 
$O_{\alpha}/J \in Z'$.

Conversely, suppose that $J \subseteq O_{\alpha}$ with $O_{\alpha}/J \in Z'$.  There is a set of $Y$-torsionfree objects in $Z$, say $\{ M_{\alpha} \}$, and an epimorphism $g: M = \bigoplus_{\alpha} M_{\alpha} \to O_{\alpha}/J$.  The object $M$ is also $Y$-torsionfree and in $Z$.  Let $p: O_{\alpha} \to O_{\alpha}/J$ be the canonical epimorphism.  Using the projectivity of $O_{\alpha}$, we may find a morphism $f: O_{\alpha} \to M$ such that $g \circ f = p$.  Let $I = \ker f$, and note that $I \subseteq J = \ker p $.  Since $M$ is $Y$-torsionfree and in $Z$, $O_{\alpha}/I$ is $Y$-torsionfree and in $Z$. That is, $I = \wt{I}$ and $I \in \mc{F}_{\alpha}$.  So $\wt{I} \subseteq J$ where $I \in \mc{F}_{\alpha}$.

(2) We have seen that $\wh{Z} = (Y * Z)'$ in Proposition~\ref{prop:Y-essentiallystable}.  We know that $Y * Z$ is weakly closed in $X$ since both $Y$ and $Z$ are.  Let $\{ \mc{G}_{\alpha} \}$ be the filter system corresponding to $Y * Z$.  It is obvious that 
\[
\mc{G}_{\alpha} = \{ J \subseteq O_{\alpha} \, | \, \text{there is}\ J \subseteq L \subseteq O_{\alpha}\ \text{with}\ O_{\alpha}/L \in Y, L/J \in Z \}.
\]
Now note that $\wh{F}_{\alpha} = \mc{G}'_{\alpha}$ and apply (1).
\end{proof}

\begin{corollary}
\label{cor:Y-reduced}
Assume the same hypotheses as in Proposition~\ref{prop:Y-reduced}, so $\{ \mc{F}_{\alpha} \}$ is the filter system corresponding to a weakly closed subcategory $Z$.
\begin{enumerate}
    \item $Z$ is $Y$-torsionfree generated if and only if for all $J \in \mc{F}_{\alpha}$, there is $I \in \mc{F}_{\alpha}$ with $\wt{I} \subseteq J$.
    \item $Z$ is $Y$-essentially stable if and only if whenever $I \subseteq L \subseteq O_{\alpha}$ with $L/I \in Z$ and $O_{\alpha}/L \in Y$, then $O_{\alpha}/\wt{I} \in Z$.
\end{enumerate}
\end{corollary}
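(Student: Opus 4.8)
The plan is to transport everything to the level of filter systems via the bijection of Theorem~\ref{thm:weakly-closed}(3), combined with the explicit formulas of Proposition~\ref{prop:Y-reduced}. Write $\{\mc{F}'_{\alpha}\}$ for the filter system attached to $Z'$ and $\{\wh{\mc{F}}_{\alpha}\}$ for the one attached to $\wh{Z}$, as computed in Proposition~\ref{prop:Y-reduced}(1) and (2). Since the correspondence between weakly closed subcategories of $X$ and filter systems in $S$ is bijective, the two equalities of subcategories that we must recognize, namely $Z = Z'$ for part~(1) and $Z' = \wh{Z}$ for part~(2), are each equivalent to the equality of the corresponding filter systems, $\alpha$ by $\alpha$.

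For part~(1), I would first observe that $Z$ is $Y$-torsionfree generated if and only if $Z = Z'$, straight from the definition of $Z'$. One inclusion of filters is automatic: if $\wt{I} \subseteq J$ with $I \in \mc{F}_{\alpha}$, then $\wt{I} \in \mc{F}_{\alpha}$ because $I \subseteq \wt{I}$ and $\mc{F}_{\alpha}$ is closed under specialization, whence $J \in \mc{F}_{\alpha}$; so $\mc{F}'_{\alpha} \subseteq \mc{F}_{\alpha}$ always. Hence $\mc{F}_{\alpha} = \mc{F}'_{\alpha}$ for all $\alpha$ is equivalent to $\mc{F}_{\alpha} \subseteq \mc{F}'_{\alpha}$ for all $\alpha$, and plugging in the description of $\mc{F}'_{\alpha}$ from Proposition~\ref{prop:Y-reduced}(1) rewrites this as precisely the asserted condition: every $J \in \mc{F}_{\alpha}$ admits some $I \in \mc{F}_{\alpha}$ with $\wt{I} \subseteq J$.

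For part~(2), I would invoke Proposition~\ref{prop:Y-essentiallystable}(2), which replaces ``$Z$ is $Y$-essentially stable'' by ``$Z' = \wh{Z}$'', hence by ``$\mc{F}'_{\alpha} = \wh{\mc{F}}_{\alpha}$ for all $\alpha$''. This time the inclusion $\mc{F}'_{\alpha} \subseteq \wh{\mc{F}}_{\alpha}$ is free: given $J \in \mc{F}'_{\alpha}$, pick $I \in \mc{F}_{\alpha}$ with $\wt{I} \subseteq J$ and take $L = O_{\alpha}$ in the formula of Proposition~\ref{prop:Y-reduced}(2). So only $\wh{\mc{F}}_{\alpha} \subseteq \mc{F}'_{\alpha}$ is at issue. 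Since $\mc{F}'_{\alpha}$ is a filter and $\wh{\mc{F}}_{\alpha}$ is exactly the set of subobjects of $O_{\alpha}$ containing some $\wt{I}$ arising from a pair $I \subseteq L \subseteq O_{\alpha}$ with $O_{\alpha}/L \in Y$ and $L/I \in Z$, closure under specialization reduces the inclusion $\wh{\mc{F}}_{\alpha} \subseteq \mc{F}'_{\alpha}$ to the statement that $\wt{I} \in \mc{F}'_{\alpha}$ for every such pair $(I,L)$. Finally, because $\mc{F}'_{\alpha}$ is the filter of $Z'$ and a $Y$-torsionfree object lies in $Z'$ exactly when it lies in $Z$ (as $Z'$ is generated by the $Y$-torsionfree objects of $Z$ and $Z' \subseteq Z$), the membership $\wt{I} \in \mc{F}'_{\alpha}$ is equivalent to $O_{\alpha}/\wt{I} \in Z$, recalling that $O_{\alpha}/\wt{I}$ is $Y$-torsionfree. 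Chaining these equivalences yields the stated criterion.

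I do not expect a real obstacle here: essentially all of the substance has been absorbed into Propositions~\ref{prop:Y-reduced} and~\ref{prop:Y-essentiallystable}, and what remains is a translation. The one spot that deserves a moment's care is the reduction in part~(2) that it suffices to test membership on the minimal subobjects $J = \wt{I}$ rather than on all of $\wh{\mc{F}}_{\alpha}$, which relies on $\mc{F}'_{\alpha}$ being closed under specialization, together with the clean identification of $\wt{I} \in \mc{F}'_{\alpha}$ with $O_{\alpha}/\wt{I} \in Z$.
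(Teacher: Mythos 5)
Your proof is correct and takes essentially the same approach as the paper's: reduce to the equality of filter systems via Theorem~\ref{thm:weakly-closed}(3), then unwind the explicit formulas of Proposition~\ref{prop:Y-reduced}, using Proposition~\ref{prop:Y-essentiallystable}(2) for the translation in part~(2). The paper's proof is two sentences and leaves all of this implicit; you have simply spelled out the automatic inclusions $\mc{F}'_\alpha \subseteq \mc{F}_\alpha$ and $\mc{F}'_\alpha \subseteq \wh{\mc{F}}_\alpha$, the reduction via closure under specialization, and the identification of $Y$-torsionfree objects of $Z'$ with those of $Z$, all of which are the intended content.
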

\begin{proof}
(1) $Z$ is $Y$-torsionfree generated if and only if $Z = Z'$, so this follows from Proposition~\ref{prop:Y-reduced}(1).

(2) $Z$ is $Y$-essentially stable if and only if $Z' = \wh{Z}$ by Proposition~\ref{prop:Y-essentiallystable}, so this is a consequence of Proposition~\ref{prop:Y-reduced}(2).
\end{proof}

Kanda has given a bijection between weakly closed (or in his terminology, prelocalizing) subcategories of a quotient category $X/Y$ and a different class of weakly closed subcategories of $X$ \cite[Proposition 4.14(1)]{Kan15}: in the notation above, $\mc{Z} \subseteq X/Y$ simply corresponds to $\wt{Z} = \pi^{-1}(\mc{Z}) \subseteq X$. This is a very natural correspondence, but it does not restrict well to closed subcategories.  In fact, in many examples of interest, $X$ is an inverse limit of objects in $Y$, and when this is the case, any closed subcategory containing $Y$ is all of $X$.  On the other hand, Kanda's bijection restricts to localizing subcategories, and implies that the localizing subcategories of $X/Y$ are in one-to-one correspondence with localizing subcategories of $X$ which contain $Y$ \cite[Proposition 4.14(3)]{Kan15}. 

In the last result of this section we record for reference how some of the theory above plays out for a pair of localizing subcategories, which will be useful in one of the examples below.  Let $X$ be a Grothendieck category with two localizing subcategories $Y_1 \subseteq Y_2$.  Let $\pi_1: X \to X/Y_1$ and $\omega_1: X/Y_1 \to X$ be the quotient and section functors.  Then $\mc{Y} = \pi_1(Y_2)$ is localizing in $X/Y_1$ \cite[Proposition 4.14(3)]{Kan15}. Similarly, let $\wt{\pi}: X/Y_1 \to (X/Y_1)/\mc{Y}$ and $\wt{\omega}:(X/Y_1)/\mc{Y} \to X/Y_1$  be the quotient and section functors for this quotient category. The composite $\wt{\pi} \circ \pi_1: X \to (X/Y_1)/\mc{Y}$ is exact with fully faithful right adjoint $\omega_1 \circ \wt{\omega}$, which implies by the Gabriel-Popescu theorem (see the next section) that $(X/Y_1)/\mc{Y}$ is equivalent to $X/Y_2$, since $Y_2 = \{ M \in X | \wt{\pi} \circ \pi_1(M) = 0 \}$.  Using this we write $\wt{\pi} \circ \pi_1 = \pi_2$ and $\omega_1 \circ \wt{\omega} = \omega_2$ and identify these with the quotient and section functors for the quotient of $X$ by $Y_2$.  See \cite[Proposition 4.18(3)]{Kan15} for more details.  

\begin{proposition}
\label{prop:twolocalizing}
Keep all of the notation from the previous paragraph.  Let $Z$ be weakly closed in $X$.
\begin{enumerate}
\item  If $M \in X$ is $Y_2$-torsionfree, then $\pi_1(M) \in X/Y_1$ is $\mc{Y}$-torsionfree.  
\item If $Z$ is $Y_2$-torsionfree generated in $X$ then $\pi_1(Z)$ is $\mc{Y}$-torsionfree generated in $X/Y_1$.
\item  If $Z$ is $Y_2$-essentially stable in $X$, then $\pi_1(Z)$ is $\mc{Y}$-essentially stable in $X/Y_1$.
\end{enumerate}
\end{proposition}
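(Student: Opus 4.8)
The plan is to reduce all three parts to the identifications recorded just before the statement: the exact functor $\pi_2 = \wt\pi \circ \pi_1$ has right adjoint $\omega_2 = \omega_1 \circ \wt\omega$, and $Y_2 = \{ M \in X \mid \pi_2(M) = 0 \}$. Since $\pi_2(M) = \wt\pi(\pi_1(M))$ and $\mc{Y} = \ker \wt\pi$, this last equality says precisely that $\pi_1^{-1}(\mc{Y}) = Y_2$, which I expect to be the workhorse of part (1). I will also use throughout that $\pi_1$ is exact and preserves coproducts, and that the counit $\pi_1 \omega_1 \to \on{id}_{X/Y_1}$ is an isomorphism.

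For (1): take $M$ with no nonzero subobject in $Y_2$ and let $\mc{N} \subseteq \pi_1(M)$ lie in $\mc{Y}$. As in the proof of Proposition~\ref{prop:operations-on-Z}(1), every subobject of $\pi_1(M)$ has the form $\pi_1(N)$ for a subobject $N \subseteq M$ \cite[Proposition 4.10]{Kan15}; then $\pi_1(N) = \mc{N} \in \mc{Y}$ forces $N \in \pi_1^{-1}(\mc{Y}) = Y_2$, hence $N = 0$ and $\mc{N} = 0$, so $\pi_1(M)$ is $\mc{Y}$-torsionfree. For (2): by Proposition~\ref{prop:operations-on-Z}(1) the category $\pi_1(Z)$ is weakly closed, so it is enough to write each of its objects as an epimorphic image of a direct sum of $\mc{Y}$-torsionfree objects of $\pi_1(Z)$. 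An object of $\pi_1(Z)$ is $\pi_1(N)$ with $N \in Z$; since $Z$ is $Y_2$-torsionfree generated there is an epimorphism $\bigoplus_i K_i \to N$ with each $K_i \in Z$ that is $Y_2$-torsionfree, and applying $\pi_1$ gives an epimorphism $\bigoplus_i \pi_1(K_i) \to \pi_1(N)$ in which each $\pi_1(K_i)$ lies in $\pi_1(Z)$ and is $\mc{Y}$-torsionfree by part (1).

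For (3): given $\mc{M} = \pi_1(M)$ with $M \in Z$, I would compute $\wt\omega \wt\pi(\mc{M}) = \wt\omega \wt\pi \pi_1(M) = \wt\omega \pi_2(M)$ using $\wt\pi \pi_1 = \pi_2$. Applying $\omega_1$ and using $\omega_1 \wt\omega = \omega_2$ shows $\omega_1(\wt\omega \pi_2(M)) = \omega_2 \pi_2(M)$, which lies in $Z$ because $Z$ is $Y_2$-essentially stable; hence $\pi_1(\omega_2 \pi_2(M)) \in \pi_1(Z)$. Finally, since the counit $\pi_1 \omega_1 \to \on{id}_{X/Y_1}$ is an isomorphism, $\pi_1(\omega_2 \pi_2(M)) = \pi_1 \omega_1(\wt\omega \pi_2(M)) \cong \wt\omega \pi_2(M) = \wt\omega \wt\pi(\mc{M})$, and as $\pi_1(Z)$ is weakly closed (hence closed under isomorphism) we get $\wt\omega \wt\pi(\mc{M}) \in \pi_1(Z)$, i.e. $\pi_1(Z)$ is $\mc{Y}$-essentially stable.

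These arguments are all short; the only steps needing care are the use in (1) of the lifting of subobjects along the quotient functor, and in (3) the bookkeeping that lets the $Y_2$-essentially stable conclusion be pushed down through $\pi_1$ — the essential input being $\pi_1 \omega_1 \cong \on{id}$. I do not anticipate a real obstacle, though one should verify that the identifications of the composites of quotient and section functors set up via Gabriel-Popescu before the statement are indeed compatible with the three adjunctions used here.
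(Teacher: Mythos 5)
Your proof is correct and takes essentially the same route as the paper: part (1) is the same lifting-of-subobjects argument via Kanda's Proposition 4.10 together with $\pi_1^{-1}(\mc{Y}) = Y_2$; part (2) pushes a $Y_2$-torsionfree generating family through $\pi_1$ exactly as the paper does; and part (3) is the same computation $\wt\omega\wt\pi\pi_1 = \wt\omega\pi_2 = \pi_1\omega_1\wt\omega\pi_2 = \pi_1\omega_2\pi_2$, which you just unwind through $\omega_1$ and then recover via $\pi_1\omega_1 \cong \on{id}$ rather than writing the functor identity directly.
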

\begin{proof}
(1).  Let $\mc{N} \subseteq \pi_1(M)$ with $\mc{N} \in \mc{Y}$.  Then we can find $N \subseteq M$ 
such that $\pi_1(N) = \mc{N}$ \cite[Proposition 4.10]{Kan15}.  Now $N \in \pi_1^{-1}(\mc{Y}) = Y_2$, by \cite[Proposition 4.14(3)]{Kan15}.  Since $M$ is $Y_2$-torsionfree, $N = 0$ and so $\mc{N} = \pi_1(N) = 0$.

(2).  Given a set $\{M_{\alpha} \}$ of $Y_2$-torsionfree generators of $X$, then $\{ \pi_1(M_{\alpha}) \}$ is a set of $\mc{Y}$-torsionfree generators of $X/Y_1$ by (1).

(3). We need $\wt{\omega}\wt{\pi}(\pi_1(Z)) \subseteq \pi_1(Z)$.  We calculate 
\[
\wt{\omega} \wt{\pi} \pi_1 = \wt{\omega} \pi_2 =  \pi_1 \omega_1 \wt{\omega} \pi_2 = \pi_1 \omega_2 \pi_2.
\]
Since $Z$ is $Y_2$-essentially stable in $X$, we have $\omega_2 \pi_2(Z) \subseteq Z$.  
Thus $\wt{\omega} \wt{\pi} \pi_1(Z) = \pi_1 \omega_2 \pi_2(Z) \subseteq \pi_1(Z)$.
\end{proof}

\section{Examples}
\label{sec:examples}
We now give a series of examples to illustrate the subtle behavior of closed subcategories under taking quotient categories.    

By the Gabriel-Popescu theorem \cite[Theorem 3.7.9, Corollary 4.10]{Pop73}, any Grothendieck category $\mc{X}$, say with generator $O$, is equivalent to a quotient category $(\rcatMod R)/Y$ for the ring $R = \End_X(O)$ and some localizing subcategory $Y$.  Since $\rcatMod R$ has exact products, this shows that in theory, Theorem~\ref{thm:quotientclosed} can be applied to understand the closed subcategories of any Grothendieck category.  In practice, it may be hard to understand the nature of $Y$ and to determine which closed subcategories $\rcatMod (R/I)$ for ideals $I$ of $R$ are $Y$-closed.  Also, since $O$ is often a very large object, the ring $R$ is also then very large and awkward to work with, which is why it was useful to formulate the results in Section~\ref{sec:generators} in terms of a set of compact projective generators instead of a single generator.  Still, this suggests that to look for examples of various kinds of behavior in the results of Section~\ref{sec:quotient}, we can focus on quotient categories of module categories.

In the following examples we fix $X = \rcatMod R$ for a unital ring $R$.  We fix the following general notation in all of the examples as well: Let $Y$ be a localizing subcategory of $X$, corresponding to a Gabriel filter $\mc{F}$ of right ideals of $R$.  Let $Z$ be a closed subcategory of $X$, so $Z = \rcatMod (R/K)$ for an ideal $K$ of $R$.  For any right ideal $J$, let $\wt{J}$ be its saturation in $R$  with respect to $Y$, so $\wt{J}/J = \tau(R/J)$ is the largest subobject of $R/J$ in $Y$.  
By Corollary~\ref{cor:Y-reduced}, $Z$ is $Y$-torsionfree generated if and only if $K = \wt{K}$, while
$Z$ is $Y$-essentially stable if and only if for any $I \in \mc{F}$ and any right ideal $L \subseteq I$ such that $I/L \in Z$, we have $R/\wt{L} \in Z$.  Because belonging to $Z$ is the same as being annihilated by $K$, for fixed $I$ the set of possible $L$ has a unique smallest member $L = IK$.  So $Z$ is $Y$-essentially stable if and only if $R/\wt{IK} \in Z$ for all $I \in \mc{F}$.  This is equivalent to $K \subseteq \wt{IK}$ for all $I \in \mc{F}$.  Finally, $K \subseteq \wt{IK}$ is equivalent to $\wt{IK} = \wt{K}$.

To summarize, $Z = \rcatMod (R/K)$ is $Y$-torsionfree generated if and only if $K = \wt{K}$; $Y$-essentially stable if and only if $\wt{K} = \wt{IK}$ for all $I \in \mc{F}$; and thus $Y$-closed if and only if $K = \wt{K} = \wt{IK}$ or equivalently $K = \wt{IK}$ for all $I \in \mc{F}$.
Therefore, by Theorem~\ref{thm:quotientclosed}, the closed subcategories of $X/Y$ correspond to ideals $K$ of $R$ such that $\wt{IK} = K$ for all $I \in \mc{F}$.  

\begin{example}
\label{ex:commutative}
Suppose that $R$ is commutative.  In this case, for any $I \in \mc{F}$, clearly $K/IK = K/KI \in Y$.  This shows that $K \subseteq \wt{IK}$.  Thus $Z$ is automatically $Y$-essentially stable, and the closed subcategories of $X/Y$ simply correspond to the ideals $K$ of $R$ such that $K = \wt{K}$.  This shows that the $Y$-essentially stable condition is a purely noncommutative phenomenon in some sense.
\end{example}

On the other hand, it is not hard to find a simple example of a noncommutative ring $R$ and a closed subcategory of $\rcatMod R$ that is not $Y$-essentially stable for some localizing subcategory $Y$.

\begin{example}
\label{ex:noncommutative} [cf. \cite[Example 6.11]{Smi02}]
Let $R = \begin{pmatrix} \kk & \kk \\ 0 & \kk \end{pmatrix}$ be the ring of upper triangular matrices over a field $\kk$. Consider the ideals $I_1 = \begin{pmatrix} 0 & \kk \\ 0 & \kk \end{pmatrix}$ and $I_2 =\begin{pmatrix} \kk & \kk \\ 0 & 0 \end{pmatrix}$ of $R$, so  
$S_1 = R/I_1$ and $S_2 = R/I_2$ are the two simple right $R$-modules.  As right modules, 
$R = P_1 \oplus P_2$ for the two indecomposable projectives $P_1$ and $P_2$ given by the rows of $R$.  There is a non-split exact sequence $0 \to S_2 \to P_1 \to S_1 \to 0$, while $P_2 \cong S_2$.

Since $I_i$ is an ideal, $Z_i = \rcatMod (R/I_i)$ is closed in $X$ for $i = 1, 2$. 
The category $Z_i$ is semisimple, consisting of all direct sums of copies of $S_i$.  The simple module $S_i$ has no self-extensions, so $Z_i$ is also localizing for $i = 1, 2$.

Now taking $Y = Z_1$ and $Z = Z_2$, then $Z$ is closed but not $Y$-essentially stable, because 
$Z = Z'$ while $P_1 \in \wh{Z}$ so $\wh{Z} = X$.    Similarly, one sees that the semisimple closed subcategory $\rcatMod (R/(I_1 \cap I_2))$ consisting of all direct sums of $S_1$ and $S_2$ (which is $Z_1 \cup Z_2$ in the language of the next section) is not $Y$-essentially stable.
\end{example}

\begin{example}
\label{ex:ringloc}
Let $R$ be any ring.  Suppose that $S \subseteq R$ is a \emph{right denominator set} 
as in \cite[Chapter 10]{GW04}; so the Ore localization $RS^{-1}$ is defined.  
In the language of quotient categories, the full subcategory 
\[
Y = \{ M \in \rcatMod R \, | \,  \text{for all}\ m \in M\ \text{there is}\ s \in S\ \text{such that}\ ms = 0\}
\] 
is a localizing subcategory; the corresponding filter of right ideals of $R$ is $\mc{F} = \{ I \subseteq R \,|\, s \subseteq I\ \text{for some}\ s \in S \}$.    It is well known that $X/Y$ can be identified with $\rcatMod RS^{-1}$ in such a way that $\pi: X \to X/Y$ is identified with the functor $- \otimes_R RS^{-1}: \rcatMod R \to \rcatMod RS^{-1}$.

We claim that the following are equivalent for an ideal $K$: (i) $Z = \rcatMod (R/K)$ is $Y$-essentially stable; 
(ii) given $k \in K$ and $s \in S$, there exists $t \in S$ and $k' \in K$ with $sk' = k t$; and (iii) $KS^{-1}$ is an ideal of $RS^{-1}$.  
We already saw that $Z$ is $Y$-essentially stable if and only if $K \subseteq \wt{IK}$ for any $I \in \mc{F}$.  Given $s \in S$ and $k \in K$, taking $I = sR$ we have $IK = sK$; then the right annihilator of $k + sK$ in $K/sK$ should be in $\mc{F}$, so there is $t \in S$ and $k' \in K$ with $kt = sk'$.  This shows that (i) $\implies$ (ii) and the converse is similar.  If we have (ii), then given $k \in K$ and $s \in S$, in the ring $RS^{-1}$ we have $s^{-1}k = k' t^{-1}$ for some $k' \in K$ and $t \in S$; so $S^{-1}K \subseteq KS^{-1}$ and thus $RS^{-1}KS^{-1} \subseteq RK S^{-1} \subseteq KS^{-1}$.  So (ii) $\implies$ (iii)  and again we omit the converse.

If $R$ is a right noetherian ring, then for any ideal $K$ of $R$ it is known that $KS^{-1}$ is again an ideal of $RS^{-1}$ \cite[Theorem 10.18]{GW04}.  In this case every closed subcategory $Z = \rcatMod (R/K)$ of $\rcatMod R$ is $Y$-essentially stable, and Theorem~\ref{thm:quotientclosed} just recovers the well known bijection between ideals of $RS^{-1}$ and ideals $K$ of $R$ with $K = \wt{K}$.  
\end{example}

\begin{example}
\label{ex:idealpowers}
Let $X = \rcatMod R$, let $I$ be any ideal in $R$, and let $Y$ be the weakly closed subcategory associated to the filter of right ideals $\mc{F} = \{ J \subseteq R \, | \, I^n \subseteq J\ \text{for some}\ n \geq 1 \}$.  Assume that $Y$ is a localizing subcategory, or equivalently that $\mc{F}$ is a Gabriel filter.  This is automatic if $R$ is noetherian, as is easy to check.

Let $Z = \rcatMod (R/K)$ for some ideal $K$.  We know by Proposition~\ref{prop:Y-reduced} that the associated $Y$-weakly closed subcategory $\wh{Z}$ corresponds to the filter of right ideals
\[
\wh{F} = \{ J \subseteq R \,| \, \wt{P} \subseteq J\ \text{for some}\ P \subseteq L \subseteq R\ \text{with}\ L/P \in Z, R/L \in Y \}.
\]
So if $J \in \wh{F}$ then $I^n \subseteq L$ for some $n$, while $LK \subseteq P$; thus $I^n K \subseteq P$ and so $\wt{I^n K} \subseteq J$.  It follows that $\wh{F} = \{ J \, | \, \wt{I^nK} \subseteq J\ \text{for some}\ n \geq 0 \}$.  
Because the filter is determined by all objects containing an object from the descending chain 
\begin{equation}
\label{eq:chain}
\wt{K} \supseteq \wt{IK} \supseteq \wt{I^2K} \supseteq \dots,
\end{equation}
the filter is principal if and only if \eqref{eq:chain} stabilizes.

Thus there are two possibilities.  If the descending chain \eqref{eq:chain} stabilizes, then $\wh{F}$ is principal with unique minimal element $\wt{I^nK}$ for some $n$, so $\wh{Z} = \rcatMod (R/\wt{I^nK})$ is $Y$-closed in $X$, and $\pi(Z) = \pi(\wh{Z})$ is closed in $X/Y$ by Theorem~\ref{thm:quotientclosed}.  On the other hand, we have seen that $Z$ is $Y$-essentially stable if and only if $\wt{K} = \wt{I^nK}$ for all $n$; that is, if all terms in the chain \eqref{eq:chain} are equal.  This is a more stringent condition which guarantees that $\wh{Z} = Z' = \rcatMod (R/\wt{K})$.  

If \eqref{eq:chain} does not stabilize, then $\wh{Z}$ is not closed in $X$, so $\pi(Z)$ is not closed in $X/Y$, only weakly closed, by the correspondence of Theorem~\ref{thm:quotientclosed}.  

To see what happens in an explicit case, consider Example~\ref{ex:noncommutative} again, and let $I = I_1$ and $K = I_2$ in the notation of that example. Then $Y = Z_1 = \rcatMod (R/I_1)$ is localizing of the form above (since $I_1^2 = I_1$). Although $Z = \rcatMod(R/I_2)$ is not $Y$-essentially stable, $\pi(Z) =  X/Y$ is nonetheless closed in $X/Y$.  We have $\wt{K} = K$ while $\wt{I^n K} = 0$ for all $n \geq 1$, so the chain \eqref{eq:chain} does stabilize, but not until the second term.
\end{example}

\begin{example}
\label{ex:badloc}
Consider the special case of Example~\ref{ex:idealpowers} where $I = sR$ is principal, generated by a normal nonzerodivisor $s$.  It is easy to check that $Y$ is a localizing subcategory in this case whether or not $R$ is noetherian.  In fact $S = \{1, s, s^2, \dots \}$ is a right denominator set in $R$ and so this is also a special case of Example~\ref{ex:ringloc}.  

Let $\phi: R \to R$ be the automorphism such that $sr = \phi(r) s$ for all $r \in R$.
For an ideal $K$ of $R$, note that $\wt{s^nK} = \wt{\phi^n(K) s^n} = \wt{\phi^n(K)}$.  Moreover, since $\phi(S) = S$, the automorphism $\phi$ commutes with saturation and so $\wt{\phi^n(K)} = \phi^n(\wt{K})$.  Thus the descending chain \eqref{eq:chain} takes the form
\[
\wt{K} \supseteq \phi(\wt{K}) \supseteq \phi^2(\wt{K}) \supseteq \dots
\]
Since $\phi$ is an isomorphism, if the chain stabilizes then all elements of the chain are equal. In this case $Z = \rcatmod (R/K)$ is $Y$-essentially stable, $\wh{Z} = \rcatmod (R/\wt{K})$, and $\mc{Z} = \pi(Z)$ is closed in $X/Y$.  Otherwise the chain is properly descending, in which case 
$\wh{Z}$ and $\pi(Z)$ are only weakly closed, not closed.   In the latter case the chain $\wt{K} \subseteq \phi^{-1}(\wt{K}) \subseteq \phi^{-2}(\wt{K}) \subseteq \dots$ must be proper ascending as well, reconfirming that this can only happen for non-noetherian rings $R$ (see Example~\ref{ex:ringloc}).

The standard example  of a ring $R$ with an ideal $K$ and a localization $RS^{-1}$ such that $KS^{-1}$ is only a right ideal is exactly of this latter type \cite[Exercise 10L]{GW04}.  Explicitly, let $\kk$ be a field and let $R = \kk[x_n | n \in \mb{Z} ][s; \sigma]$ 
where $\sigma(x_n) = x_{n+1}$ for all $n$, and let $K = \wt{K} = \sum_{n \geq 0} x_n R$.  Obviously $K \supsetneq sKs^{-1} = \sum_{n \geq 1} x_n R$.
\end{example}

Example~\ref{ex:badloc} shows that the $Y$-essentially stable hypothesis in Theorem~\ref{thm:quotientclosed}(1) cannot be removed. By considering more general kinds of quotient categories not related to Ore localization, we can easily give such an example where $X = \rcatMod R$ for a noetherian ring $R$.
\begin{example}
\label{ex:pinotclosed}
Consider Example~\ref{ex:idealpowers} for a noetherian ring $R$, where $I$ is an arbitrary ideal, but assume that $K = xR$ for a normal nonzerodivisor $x \in R$. Let $\psi: R \to R$ be the automorphism such that $xz = \psi(z) x$ for $z \in R$.  In this case 
$I^n K = I^n x = x \psi^{-1}(I^n)$.  So the descending chain in \eqref{eq:chain} becomes 
\begin{equation}
\label{eq:chain2}
\wt{xR} \supseteq \wt{x\psi^{-1}(I)} \supseteq \wt{x \psi^{-1}(I^2)} \supseteq \dots.
\end{equation}
Suppose that this chain stabilizes, say $\wt{x \psi^{-1}(I^n)} = \wt{x \psi^{-1}(I^{n+1})}$ for some $n$. Since $R$ is noetherian, this means there is $m \geq 1$ such that 
$x \psi^{-1}(I^n) I^m \subseteq x \psi^{-1}(I^{n+1})$, or equivalently $I^n \psi(I^m) \subseteq I^{n+1}$.

Now suppose that $\psi(I)$ and $I$ are comaximal, i.e. $\psi(I) + I = R$, and that $I^n \supsetneq I^{n+1}$ for all $n \geq 1$.  Then $\psi(I)^m = \psi(I^m)$ and $I$ are also comaximal for any $m$. The equation $I^n \psi(I^m) \subseteq I^{n+1}$ implies $I^n (\psi(I^m) + I) \subseteq I^{n+1}$ and so $I^n \subseteq I^{n+1}$, a contradiction.  So \eqref{eq:chain2} properly descends.  Then by Example~\ref{ex:idealpowers}, $Z = \rcatMod (R/K)$ is a closed subcategory of $X = \rcatMod R$ for which $\pi(Z) \subseteq X/Y$ is only weakly closed, not closed.

For an explicit example of the scenario above, let $\kk$ be a field, let $R = \kk \langle x, y \rangle/(yx - qxy)$ be the quantum plane for some $1 \neq q \in \kk$, and let $I = (x, y-1)$ and $K = xR$.
\end{example}

Our last example in this section is perhaps the most subtle.  It shows the importance of the (AB4*) condition in Theorem~\ref{thm:quotientclosed}.

\begin{example}
\label{ex:weird}
Keep the setup in Example~\ref{ex:pinotclosed}, so $K = xR$ for a normal nonzerodivisor $x$ in a noetherian ring $R$, $\psi$ is the automorphism of $R$ with $x r = \psi(r) x$, and $I$ is an ideal of $R$ such that $I^n \neq I^{n+1}$ for all $n \geq 0$ and $I$ and $\psi(I)$ are comaximal. Suppose now in addition that $xR$ is a prime ideal, and that $x^2$ is central in $R$, so $\psi^2 = 1$.  Let $J = I \cap \psi(I)$, so that $J$ is a $\psi$-fixed ideal of $R$.

Let $Y_1$ be the localizing subcategory associated to $I$ as in Example~\ref{ex:idealpowers}.  Similarly, let $Y_2$ be the localizing subcategory associated to $J$.  Let $\pi_i: X \to X/Y_i$ and $\omega_i: X/Y_i \to X$ be the quotient and section functors for each $i$.  Note that $Y_1 \subseteq Y_2$, so we are now in the situation discussed in Proposition~\ref{prop:twolocalizing}.  Thus $\mc{Y} = \pi_1(Y_2)$ is localizing in $X/Y_1$, and putting $\wt{\pi}: X/Y_1 \to (X/Y_1)/\mc{Y}$ and  $\wt{\omega}: (X/Y_1)/\mc{Y} \to X/Y_1$ for the quotient and section functors, we can identify $X/Y_2$ with $(X/Y_1)/\mc{Y}$ so that 
$\pi_2 = \wt{\pi} \circ \pi_1$ and $\omega_2 = \omega_1 \circ \wt{\omega}$.

Let $Z = \rcatMod R/K$, which is closed in $X = \rcatMod R$.  Let $Z_1 = \pi_1(Z) \subseteq X/Y_1$ and $Z_2 = \wt{\pi}(Z_1) = \pi_2(Z) \subseteq X/Y_2$.
It is immediate from Example~\ref{ex:pinotclosed} that $Z_1$ is not closed in $X/Y_1$, only weakly closed.  On the other hand, since $J$ is $\psi$-fixed, $\wt{x \psi^{-1}(J^n)} = \wt{xJ^n} = \wt{xR}$ for all $n$ (where $\wt{L}$ here means saturation of $L$ with respect to $Y_2$), so that \eqref{eq:chain2} stabilizes immediately, or equivalently $Z$ is $Y_2$-essentially stable.  So $Z_2 = \pi_2(Z)$ is closed in $X/Y_2$.

We claim that $Z$ is $Y_2$-torsionfree generated in $X$.  It suffices to show that $R/K$ is $Y_2$-torsionfree.  Suppose we have $z + K \in R/K$ such that $(z +K)J^n = 0$ for some $n \geq 1$.  Then $zJ^n \subseteq K$ and since $K$ is prime, either $z \in K$ or $J^n \subseteq K$.  In the latter case $J \subseteq K$ and thus $I \psi(I) \subseteq K$, so $I \subseteq K$ or $\psi(I) \subseteq K$.  But since $\psi(K) = K$ and $\psi^2 = 1$, this implies that both $I \subseteq K$ and $\psi(I) \subseteq K$, contradicting $I + \psi(I) = R$.
We conclude that $z \in K$ and so $z + K = 0$ as required.

Now applying Proposition~\ref{prop:twolocalizing}, we see that $Z_1 = \pi_1(Z)$ is both $\mc{Y}$-essentially stable and $\mc{Y}$-torsionfree generated.  That is, $Z_1$ is $\mc{Y}$-weakly closed in $X/Y_1$.  Since $\wt{\pi}(Z_1) = Z_2$, $Z_1$ and $Z_2$ are corresponding weakly closed subcategories under the correspondence of Proposition~\ref{thm:quotientweaklyclosed}, applied to $X/Y_1$ and its quotient category $(X/Y_1)/\mc{Y}$.  However, $Z_1$ is not closed, while $Z_2$ is.

Again it is easy to provide an explicit example.   Let $\kk$ be a field, let $R = \kk \langle x, y \rangle/(yx - qxy)$ be the quantum plane for $q = -1$, and let $I = (x, y-1)$.
\end{example}

Example~\ref{ex:weird} shows that the (AB4*) hypothesis is essential for the proof of Theorem~\ref{thm:quotientclosed}(2).  Indeed, the only way to make sense of Example~\ref{ex:weird} is to conclude that $X/Y_1$ must fail to satisfy (AB4*).  This is not in itself surprising, as general quotient categories of (AB4*) categories very often lose this property. 
More important, the example shows that one can not expect a nice correspondence between closed subcategories of a Grothendieck category $\mc{X}$ and its quotient category $\mc{X}/\mc{Y}$ when $\mc{X}$ does not satisfy (AB4*).   Of course one may always express $\mc{X}$ as a quotient category $X/Y_1$ for some (AB4*) category $X$, for example using the Gabriel-Popescu theorem, and then the closed subcategories of $\mc{X}/\mc{Y}$
can be analyzed by expressing $\mc{X}/\mc{Y} \simeq X/Y_2$ for some $Y_2$.

\section{Intersections and unions}
\label{sec:operations}

Intersection and union are basic operations we can perform on (weakly) closed subcategories, which are studied extensively by Smith in \cite{Smi02}.  In this section we work out the interaction of these operations with our results on quotient categories.  In particular, we will give a surprising example showing that the union operation does not necessarily preserve the property of being closed.  At the end of the section, we describe in more detail how our results relate to those of Smith.

We begin with the intersection operation, which is the more straightforward one.  Given a (small) collection of full subcategories $\{ Z_{\beta} \}$ of a Grothendieck category $X$, we define $\bigcap_{\beta} Z_{\beta}$ to be the full subcategory consisting of objects in all of the $Z_{\beta}$.  It is easy to see directly from the definitions that if all of the $Z_{\beta}$ are weakly closed (resp. closed), then $\bigcap_{\beta} Z_{\beta}$ is also weakly closed (resp. closed).  

In the setting of Section~\ref{sec:generators}, we can easily describe the intersection operations of weakly closed (resp. closed) subcategories in terms of their filter systems (resp. ideals).  The proof of the following result is routine.
\begin{proposition} Let $X$ be a Grothendieck category with a small set $S = \{ O_{\alpha} \}_{\alpha \in A}$ of compact projective generators.  Let $\{ Z_{\beta} \}$ be a small set of weakly closed subcategories of $X$, where $Z_{\beta}$ corresponds to the filter system $\{ \mc{F}^{\beta}_{\alpha} \}$ in the set of generators $S$.  Let $Z = \bigcap_{\beta} Z_{\beta}$.
\label{prop:int-gen}
\begin{enumerate}
    \item $Z$ has corresponding filter system $\{ \mc{F}_{\alpha} \}$ where $\mc{F}_{\alpha} = \{ I \subseteq O_{\alpha} | 
    I \in \mc{F}^{\beta}_{\alpha}\ \text{for all}\ \beta \}$.
    \item If every $Z_{\beta}$ is closed in $X$, say with corresponding ideal $\{ I^{\beta}_{\alpha} \}_{\alpha \in A}$ in the set of generators $S$, then the closed subcategory $Z$ has ideal $\{ \sum_{\beta} I^{\beta}_{\alpha} \}_{\alpha \in A}$.  
\end{enumerate}
\end{proposition}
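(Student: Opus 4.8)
The plan is to deduce both parts directly from the explicit bijection established in Theorem~\ref{thm:weakly-closed}, rather than verifying the filter-system axioms by hand. First I would recall that, by Theorem~\ref{thm:weakly-closed}(2)--(3), the filter system attached to any weakly closed subcategory $W$ of $X$ is $\{\mc{F}_\alpha(W)\}$ with $\mc{F}_\alpha(W) = \{I \subseteq O_\alpha \mid O_\alpha/I \in W\}$, and that, as noted just above the statement, an intersection of weakly closed subcategories is weakly closed. For part (1) I would then simply compute, for each $\alpha$: $I \in \mc{F}_\alpha(Z)$ iff $O_\alpha/I \in Z = \bigcap_\beta Z_\beta$ iff $O_\alpha/I \in Z_\beta$ for every $\beta$ iff $I \in \mc{F}^\beta_\alpha$ for every $\beta$. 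This is precisely the asserted formula $\mc{F}_\alpha = \{I \subseteq O_\alpha \mid I \in \mc{F}^\beta_\alpha\ \text{for all}\ \beta\}$, and because $Z$ is weakly closed the resulting collection is automatically a filter system, so no independent check is required.

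For part (2), I would first observe that an intersection of closed subcategories is closed, so $Z$ is closed, whence by Theorem~\ref{thm:weakly-closed}(4) its filter system $\{\mc{F}_\alpha\}$ is principal; it remains only to identify the minimal element of each $\mc{F}_\alpha$. Using part (1), $\mc{F}_\alpha = \bigcap_\beta \mc{F}^\beta_\alpha$, and each $\mc{F}^\beta_\alpha$ is the principal filter $\{J \mid I^\beta_\alpha \subseteq J \subseteq O_\alpha\}$. Hence $J \in \mc{F}_\alpha$ iff $I^\beta_\alpha \subseteq J$ for all $\beta$, which is equivalent to $\sum_\beta I^\beta_\alpha \subseteq J$, the sum being the join of the $I^\beta_\alpha$ in the lattice of subobjects of $O_\alpha$. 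Thus $\mc{F}_\alpha$ is the principal filter with minimal element $\sum_\beta I^\beta_\alpha$, i.e. the ideal in the set of generators corresponding to $Z$ is $\{\sum_\beta I^\beta_\alpha\}_{\alpha \in A}$, as claimed.

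I do not expect a genuine obstacle: the only points needing a word of care are bookkeeping ones. One needs the subobject lattice of each $O_\alpha$ to admit arbitrary small-indexed joins so that $\sum_\beta I^\beta_\alpha$ is a well-defined subobject, which follows from well-poweredness together with the existence of arbitrary direct sums in a Grothendieck category; and one may note that $\{\sum_\beta I^\beta_\alpha\}$ is indeed an ideal in the set of generators, either because for $f: O_\alpha \to O_\gamma$ one has $f(\sum_\beta I^\beta_\alpha) = \sum_\beta f(I^\beta_\alpha) \subseteq \sum_\beta I^\beta_\gamma$, or more simply because it has been produced as the ideal of a bona fide closed subcategory $Z$. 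The whole argument is just the transport of the tautology ``a subobject contains every $I^\beta_\alpha$ precisely when it contains their sum'' across the correspondences of Section~\ref{sec:generators}.
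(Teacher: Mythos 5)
Your proof is correct and is exactly the routine argument the paper has in mind (the paper explicitly omits the proof as ``routine''): transport the objectwise intersection through the explicit bijection of Theorem~\ref{thm:weakly-closed}, using that $\mc{F}_\alpha(W) = \{I \subseteq O_\alpha \mid O_\alpha/I \in W\}$, and in part~(2) identify the minimal element of the intersected principal filters with the join $\sum_\beta I^\beta_\alpha$ in the complete subobject lattice of $O_\alpha$. The side remarks you include — that the sum exists and is the join, and that the resulting collection is automatically a filter system (resp.\ ideal) because it comes from a bona fide weakly closed (resp.\ closed) subcategory via the established bijection — are precisely the bookkeeping one would want to record.
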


The next result shows how the intersection operation interacts with quotient categories.

\begin{proposition}
\label{prop:intersection}
Let $X$ be a Grothendieck category with localizing subcategory $Y$ and quotient category $\mc{X} = X/Y$.  
Let $\{ Z_{\beta} \}_{\beta \in B}$ be a collection of weakly closed subcategories of $X$, and let $\mc{Z}_{\beta} = \pi(Z_{\beta}) \subseteq \mc{X}$.  Recall the notation introduced before Theorem~\ref{thm:quotientweaklyclosed}.
\begin{enumerate}
\item If every $Z_{\beta}$ is $Y$-essentially stable, then so is $\bigcap_{\beta} Z_{\beta}$.
\item $\pi(\bigcap_{\beta} Z_{\beta}) = \bigcap_{\beta} \pi(Z_{\beta})$ if either (i) every $Z_{\beta}$ is $Y$-essentially stable or (ii) $B$ is a finite index set.
\item The $Y$-weakly closed subcategory of $X$ corresponding to $\bigcap_{\beta} \mc{Z}_{\beta}$ in Theorem~\ref{thm:quotientweaklyclosed} is equal to $(\bigcap_{\beta} \wh{Z}_{\beta})'$.
\end{enumerate}
\end{proposition}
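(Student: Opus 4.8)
The plan is to treat the three parts in order; only part (2) requires any genuine case analysis. Part~(1) is immediate: if $M \in \bigcap_{\beta} Z_{\beta}$ then $M \in Z_{\beta}$ for every $\beta$, and each $Z_{\beta}$ being $Y$-essentially stable gives $\omega\pi(M) \in Z_{\beta}$ for every $\beta$, hence $\omega\pi(M) \in \bigcap_{\beta} Z_{\beta}$. For part~(2), the inclusion $\pi(\bigcap_{\beta} Z_{\beta}) \subseteq \bigcap_{\beta} \pi(Z_{\beta})$ is formal, so the content is the reverse inclusion. Given $\mc{M} \in \bigcap_{\beta} \pi(Z_{\beta})$, for each $\beta$ I would choose $M_{\beta} \in Z_{\beta}$ with $\pi(M_{\beta}) \cong \mc{M}$; replacing $M_{\beta}$ by $M_{\beta}/\tau(M_{\beta})$ (still in $Z_{\beta}$ since $Z_{\beta}$ is closed under quotients, still mapping to $\mc{M}$) I may assume each $M_{\beta}$ is $Y$-torsionfree, so that $u_{M_{\beta}}$ composed with the chosen isomorphism exhibits $M_{\beta}$ as a subobject of $\omega(\mc{M})$ with $Y$-torsion cokernel.

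Now the two hypotheses are used differently. Under~(i), I would simply take $M = \omega(\mc{M})$: this object is $Y$-torsionfree and satisfies $\pi(M) \cong \mc{M}$, and for each $\beta$ we have $\omega\pi(M_{\beta}) \cong \omega(\mc{M}) = M$, which lies in $Z_{\beta}$ because $Z_{\beta}$ is $Y$-essentially stable; hence $M \in \bigcap_{\beta} Z_{\beta}$ and $\mc{M} = \pi(M) \in \pi(\bigcap_{\beta} Z_{\beta})$. Under~(ii), with $B$ finite, I would instead take $M = \bigcap_{\beta} M_{\beta}$ inside $\omega(\mc{M})$: since $M \subseteq M_{\beta} \in Z_{\beta}$ and $Z_{\beta}$ is closed under subobjects, $M \in Z_{\beta}$ for every $\beta$; and $\omega(\mc{M})/M$ embeds in the finite direct sum $\bigoplus_{\beta} \omega(\mc{M})/M_{\beta}$, which lies in $Y$ because $Y$ is closed under direct sums and each summand is $Y$-torsion, so $\omega(\mc{M})/M \in Y$ and therefore $\pi(M) = \pi(\omega(\mc{M})) \cong \mc{M}$. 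The only point of finiteness is to replace the product $\prod_{\beta} \omega(\mc{M})/M_{\beta}$ — into which $\omega(\mc{M})/M$ would otherwise embed — by a direct sum, since $Y$ need not be closed under products; and the $Y$-essentially stable hypothesis is exactly what sidesteps this, which is why the general statement must assume~(i) or~(ii).

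For part~(3), write $\mc{W} = \bigcap_{\beta} \mc{Z}_{\beta}$. The $Y$-weakly closed subcategory of $X$ corresponding to $\mc{W}$ under Theorem~\ref{thm:quotientweaklyclosed} is $(\pi^{-1}(\mc{W}))'$, and since membership in $\mc{W}$ is checked termwise, $\pi^{-1}(\mc{W}) = \bigcap_{\beta} \pi^{-1}(\mc{Z}_{\beta})$; moreover $\pi^{-1}(\mc{Z}_{\beta}) = \wt{Z}_{\beta}$, so $(\pi^{-1}(\mc{Z}_{\beta}))' = \wh{Z}_{\beta}$. Thus the claim reduces to the general identity $(\bigcap_{\beta} W_{\beta})' = (\bigcap_{\beta} W_{\beta}')'$ for an arbitrary family of weakly closed subcategories $W_{\beta}$, applied to $W_{\beta} = \pi^{-1}(\mc{Z}_{\beta})$. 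For this identity: the operation $(-)'$ is monotone (a $Y$-torsionfree object of a smaller subcategory is a $Y$-torsionfree object of a larger one), so from $W_{\beta}' \subseteq W_{\beta}$ we get $(\bigcap_{\beta} W_{\beta}')' \subseteq (\bigcap_{\beta} W_{\beta})'$; conversely, a $Y$-torsionfree generator of $(\bigcap_{\beta} W_{\beta})'$ is a $Y$-torsionfree object lying in every $W_{\beta}$, hence in every $W_{\beta}'$, hence in $\bigcap_{\beta} W_{\beta}'$, and being $Y$-torsionfree it therefore lies in $(\bigcap_{\beta} W_{\beta}')'$; since such objects generate $(\bigcap_{\beta} W_{\beta})'$ the reverse inclusion follows.

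The main obstacle is genuinely just the dichotomy in part~(2): one must recognize both that the naive infinite-intersection argument breaks at the step where $Y$ would need to absorb an infinite product, and that $Y$-essential stability circumvents this by letting $\omega(\mc{M})$ itself serve as the preimage. Parts~(1) and~(3) are then essentially formal, the only subtlety in~(3) being the necessity of applying $(-)'$ because $\bigcap_{\beta} \wh{Z}_{\beta}$, though weakly closed and (by part~(1)) $Y$-essentially stable, need not be $Y$-torsionfree generated.
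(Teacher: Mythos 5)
Your parts (1) and (2) match the paper's proof essentially step for step: in (2) the paper also reduces to $Y$-torsionfree representatives $M_\beta$ viewed as subobjects of $N = \omega(\mc{M})$, takes $N$ itself under $Y$-essential stability, and takes $\bigcap_\beta M_\beta$ under finiteness of $B$, using the embedding of $N/\bigcap_\beta M_\beta$ into the finite direct sum $\bigoplus_\beta N/M_\beta$ to land in $Y$.

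For part (3) you take a genuinely different route. The paper observes that each $\wh{Z}_\beta$ is $Y$-essentially stable, applies part (1) to get that $\bigcap_\beta \wh{Z}_\beta$ is too, deduces that $(\bigcap_\beta \wh{Z}_\beta)'$ is $Y$-weakly closed, and then uses part (2)(i) to compute $\pi\big((\bigcap_\beta \wh{Z}_\beta)'\big) = \bigcap_\beta \mc{Z}_\beta$; the conclusion follows from uniqueness in the bijection of Theorem~\ref{thm:quotientweaklyclosed}. You instead start from the explicit formula for the inverse map $\psi$ of that theorem, writing $\psi\big(\bigcap_\beta \mc{Z}_\beta\big) = \big(\pi^{-1}(\bigcap_\beta \mc{Z}_\beta)\big)' = \big(\bigcap_\beta \wt{Z}_\beta\big)'$, and reduce the claim to the elementary lattice identity $(\bigcap_\beta W_\beta)' = (\bigcap_\beta W_\beta')'$ for any family of weakly closed $W_\beta$, which you verify by comparing $Y$-torsionfree generators. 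Your approach has the advantage of being entirely formal, independent of parts (1) and (2), and of isolating a reusable general fact about the operation $(-)'$; the paper's argument is more economical given that parts (1) and (2) are already in hand, but routes the identification through $\pi$ and the uniqueness of the correspondence rather than computing $\psi$ directly. Both are correct.
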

\begin{proof}
(1) Since all $Z_{\beta}$ are stable under the operation $\omega \pi$, so is $\bigcap_{\beta} Z_{\beta}$.

(2) It is clear that $\pi(\bigcap_{\beta} Z_{\beta}) \subseteq \bigcap_{\beta} \pi(Z_{\beta})$.
Conversely, let $\mc{M} \in \bigcap_{\beta} \pi(Z_{\beta})$ and for each $\beta$ choose a $Y$-torsionfree object $M_{\beta} \in Z_{\beta}$ such that $\pi(M_{\beta}) = \mc{M}$.  For all $\beta$, $N = \omega(\mc{M}) = \omega \pi(M_{\beta})$ is isomorphic to the largest essential extension inside an injective hull of $M_{\beta}$ by objects of $Y$.  Thus without loss of generality we can think of all of the $M_{\beta}$ as subobjects of $N$.

If every $Z_{\beta}$ is $Y$-essentially stable, then $N \in \omega\pi(Z_{\beta}) = Z_{\beta}$ for all $\beta$, and as 
$\pi(N) = \mc{M}$ we have $\mc{M} \in \pi(\bigcap_{\beta} Z_{\beta})$.

If instead the index set $B$ is finite, define $M = \bigcap_{\beta} M_{\beta}$ as subobjects of $N$.  
Now $N/M$ embeds in $\bigoplus_{\beta} (N/M_{\beta})$, which is an object of $Y$.  Thus $\pi(M) = \pi(N) = \mc{M}$, and 
as $M \subseteq M_{\beta}$ we have $M \in Z_{\beta}$ for all $\beta$.  Again $\mc{M} \in \pi(\bigcap_{\beta} Z_{\beta})$.

(3) Recall that $\wh{Z}_{\beta}$ is the $Y$-weakly closed subcategory generated by all $Y$-torsionfree objects $M$ such that $\pi(M) \in \pi(Z_{\beta}) = \mc{Z}_{\beta}$.  Since each $\wh{Z}_{\beta}$ is $Y$-essentially stable, so is $\bigcap_{\beta} \wh{Z}_{\beta}$ by (1).  Then $(\bigcap_{\beta} \wh{Z}_{\beta})'$ is $Y$-weakly closed by Proposition~\ref{prop:operations-on-Z}(4).  We have 
\[
\pi((\bigcap_{\beta} \wh{Z}_{\beta})') = \pi(\bigcap_{\beta} \wh{Z}_{\beta}) = \bigcap_{\beta} \pi(\wh{Z}_{\beta}) = \bigcap_{\beta} \mc{Z}_{\beta}
\]
using (2).  So $(\bigcap_{\beta} \wh{Z}_{\beta})'$ must be the unique $Y$-weakly closed subcategory of $X$ corresponding to $\bigcap_{\beta} \mc{Z}_{\beta}$ in the bijective correspondence of Theorem~\ref{thm:quotientweaklyclosed}.
\end{proof}

\begin{example}
Let $X = \rGr R$ be the category of $\mb{Z}$-graded right modules $M = \bigoplus_{n \in \mb{Z}} M_n$ over the graded ring $R = \kk[x]$, where $\kk$ is a field.  Let $Y = \rTors R$ be localizing subcategory consisting of direct limits of modules which are finite-dimensional over $\kk$.  Let $\pi: X \to X/Y$ be the quotient map.  As mentioned in the introduction, in this case $X/Y = \rQgr R \simeq \Qcoh(\cProj \kk[x]) = \Qcoh(\on{Spec} \kk) \simeq \rcatMod \kk$.  Thus every weakly closed subcategory of $X/Y$ is $0$ or all of $X/Y$.

For each $m \in \mb{Z}$, let $Z_m \subseteq X$ consist of all graded modules $M$ such that $M_n = 0$ for all $n \leq m$.  It is easy to see that $Z_m$ is closed in $X$ \cite[Remark 1, p. 2139]{Smi02}.  Clearly $\bigcap_{m \geq 0} Z_m$ is the $0$-category, while $\pi(Z_m) = X/Y$ for all $m \geq 0$ because $Z_m$ contains the $Y$-torsionfree object $R(-m)$.  Thus $0 = \pi(\bigcap_{m \geq 0} Z_m) \neq \bigcap_{m \geq 0} \pi(Z_m) = X/Y$, showing that we cannot expect $\pi$ to commute with intersections in general without additional hypotheses (as in Proposition~\ref{prop:intersection}(2)).
\end{example}

\begin{example}
Let $R = \kk[x,y]$ for a field $\kk$, let $X = \rcatMod R$, and consider the closed subcategories $Z_1 = \rcatMod R/(x)$ and $Z_2 = \rcatMod R/(y)$.  Define $Y$ to be the localizing subcategory associated to the maximal ideal $\mf{m} = (x,y)$ as in Example~\ref{ex:idealpowers}.

It is clear that $Z_1$ and $Z_2$ are $Y$-torsionfree generated, while $Z_1 \cap Z_2 = \rcatMod R/\mf{m}$ (using Proposition~\ref{prop:int-gen}(2)) is not $Y$-torsionfree generated.  Moreover, $Z_1$ and $Z_2$ are $Y$-essentially stable and thus $Y$-closed in $X$, according to Example~\ref{ex:commutative}.  This shows that we need to take $(\wh{Z}_1 \cap \wh{Z}_2)'$ as the $Y$-closed subcategory corresponding to $\pi(Z_1) \cap \pi(Z_2)$ in general (as in Proposition~\ref{prop:intersection}(3)), rather than simply $\wh{Z}_1 \cap \wh{Z}_2$.
\end{example}

We move on to unions.  For any small collection of weakly closed subcategories $\{ Z_{\beta} \}_{\beta \in B}$ of $X$, we define $\bigcup_{\beta} Z_{\beta}$ to be the smallest weakly closed subcategory of $X$ containing all of the $Z_{\beta}$.  Since (weakly) closed subcategories are a formal analog of the closed subschemes of a scheme, we should not expect infinite unions to behave reasonably. So in the results below we concentrate on a union of two weakly closed subcategories, the generalization to any finite number being clear.  Given weakly closed subcategories $Z_1, Z_2$ of any Grothendieck category $X$, it is straightforward to see that  $Z_1 \cup Z_2$ can be described explicitly as the full subcategory of $X$ consisting of all subquotients of objects of the form $M \oplus N$ where $M \in Z_1$, $N \in Z_2$ \cite[Remark 2 after Definition 3.4]{Smi02}.

\begin{proposition} Let $X$ be a Grothendieck category with a small set $S = \{ O_{\alpha} \}_{\alpha \in A}$ of compact projective generators.  $Z_1$, $Z_2$ be weakly closed subcategories of $X$, with corresponding filter systems $\{ \mc{F}^1_{\alpha} \}$, $\{ \mc{F}^2_{\alpha} \}$.   Let $Z = Z_1 \cup Z_2$.
\label{prop:union-gen}
\begin{enumerate}
    \item The filter system in the generators corresponding to $Z$ is $\{ \mc{E}_{\alpha} \}$, where 
    \[\mc{E}_{\alpha} = \{ K \subseteq O_{\alpha} | \,\, I \cap J \subseteq K\ \text{for some}\ I \in \mc{F}^1_{\alpha}, J \in \mc{F}^2_{\alpha} \}.
    \]
    \item Suppose that $Z_1$ and $Z_2$ are closed in $X$, with corresponding ideals $\{ I^1_{\alpha} \}$ and $\{ I^2_{\alpha} \}$ in the generators. Then $Z$ is also closed in $X$, with ideal $\{ I^1_{\alpha} \cap I^2_{\alpha} \}$.
    \end{enumerate}
\end{proposition}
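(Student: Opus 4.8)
The plan is to reduce everything to the bijection of Theorem~\ref{thm:weakly-closed} between weakly closed subcategories of $X$ and filter systems in $S$. First I would check that $\{ \mc{E}_\alpha \}$ as defined is a filter system. That each $\mc{E}_\alpha$ is a filter of $O_\alpha$ is routine: $O_\alpha \in \mc{E}_\alpha$ since $O_\alpha = O_\alpha \cap O_\alpha$ with $O_\alpha$ in both $\mc{F}^1_\alpha$ and $\mc{F}^2_\alpha$; closure under specialization is immediate; and if $I_1 \cap J_1 \subseteq K_1$ and $I_2 \cap J_2 \subseteq K_2$ witness that $K_1, K_2 \in \mc{E}_\alpha$, then $(I_1 \cap I_2) \cap (J_1 \cap J_2) \subseteq K_1 \cap K_2$ with $I_1 \cap I_2 \in \mc{F}^1_\alpha$ and $J_1 \cap J_2 \in \mc{F}^2_\alpha$, so $K_1 \cap K_2 \in \mc{E}_\alpha$. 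For the compatibility condition $(\dagger)$: given a morphism $f: O_\alpha \to O_\beta$ and $K \in \mc{E}_\beta$ with $I \cap J \subseteq K$, one has $f^{-1}(I) \cap f^{-1}(J) = f^{-1}(I \cap J) \subseteq f^{-1}(K)$, and $f^{-1}(I) \in \mc{F}^1_\alpha$, $f^{-1}(J) \in \mc{F}^2_\alpha$ since $\{ \mc{F}^1_\alpha \}$ and $\{ \mc{F}^2_\alpha \}$ are filter systems; hence $f^{-1}(K) \in \mc{E}_\alpha$.

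Next I would show that the weakly closed subcategory $W$ generated by the objects $\{ O_\alpha/K \mid K \in \mc{E}_\alpha,\ \alpha \in A \}$ coincides with $Z_1 \cup Z_2$, which by Theorem~\ref{thm:weakly-closed}(3) immediately yields (1). For the inclusion $W \subseteq Z_1 \cup Z_2$, recall that $Z_1 \cup Z_2$ consists of all subquotients of objects $M \oplus N$ with $M \in Z_1$ and $N \in Z_2$. If $K \in \mc{E}_\alpha$ with $I \cap J \subseteq K$, then $O_\alpha/K$ is a quotient of $O_\alpha/(I \cap J)$, which embeds into $(O_\alpha/I) \oplus (O_\alpha/J)$; since $O_\alpha/I \in Z_1$ and $O_\alpha/J \in Z_2$, each generator of $W$ lies in the weakly closed subcategory $Z_1 \cup Z_2$, so $W \subseteq Z_1 \cup Z_2$. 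Conversely, $\mc{F}^1_\alpha \subseteq \mc{E}_\alpha$ (take $J = O_\alpha$) and $\mc{F}^2_\alpha \subseteq \mc{E}_\alpha$ (take $I = O_\alpha$), so the objects $O_\alpha/I_\alpha$ with $I_\alpha \in \mc{F}^1_\alpha$ or $I_\alpha \in \mc{F}^2_\alpha$, which by Theorem~\ref{thm:weakly-closed} generate $Z_1$ and $Z_2$ respectively, all lie in $W$; hence $Z_1, Z_2 \subseteq W$, and therefore $Z_1 \cup Z_2 \subseteq W$. Thus $W = Z_1 \cup Z_2$, and since $\{ \mc{E}_\alpha \}$ is a filter system, Theorem~\ref{thm:weakly-closed}(3) identifies it with the filter system of $Z = Z_1 \cup Z_2$.

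For (2) I would specialize the previous step. If $Z_1$ and $Z_2$ are closed, then by Theorem~\ref{thm:weakly-closed}(4) their filter systems are principal, say $\mc{F}^i_\alpha = \{ K \subseteq O_\alpha \mid I^i_\alpha \subseteq K \}$ for $i = 1, 2$. Then $K \in \mc{E}_\alpha$ holds iff $I \cap J \subseteq K$ for some $I^1_\alpha \subseteq I$ and $I^2_\alpha \subseteq J$, which is in turn equivalent to $I^1_\alpha \cap I^2_\alpha \subseteq K$ (forward because $I^1_\alpha \cap I^2_\alpha \subseteq I \cap J$, backward by taking $I = I^1_\alpha$ and $J = I^2_\alpha$). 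So $\mc{E}_\alpha$ is the principal filter on the subobject $I^1_\alpha \cap I^2_\alpha$, the system $\{ \mc{E}_\alpha \}$ is principal, and Theorem~\ref{thm:weakly-closed}(4) then gives that $Z_1 \cup Z_2$ is closed with ideal $\{ I^1_\alpha \cap I^2_\alpha \}$ in the generators $S$. (That this collection really is an ideal is automatic, being the family of minimal elements of a principal filter system, but it also follows directly from $f(I^1_\alpha \cap I^2_\alpha) \subseteq f(I^1_\alpha) \cap f(I^2_\alpha) \subseteq I^1_\beta \cap I^2_\beta$ for any morphism $f: O_\alpha \to O_\beta$.) I do not expect any step to be a real obstacle; the point to handle carefully is the passage between the intersection $I \cap J$ of subobjects of $O_\alpha$ and the embedding $O_\alpha/(I \cap J) \hookrightarrow (O_\alpha/I) \oplus (O_\alpha/J)$, which is exactly what links the combinatorics of the filters $\mc{E}_\alpha$ to the categorical union $Z_1 \cup Z_2$, and which is ultimately the reason part (2) can hold here even though a union of closed subcategories need not be closed in a general quasi-scheme.
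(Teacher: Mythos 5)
Your proof is correct, and for the substantive direction in part (1) it takes a genuinely different and somewhat cleaner route than the paper. Both arguments agree on the easy inclusion: showing that $I \cap J \subseteq K$ with $I \in \mc{F}^1_\alpha$, $J \in \mc{F}^2_\alpha$ forces $O_\alpha/K \in Z_1 \cup Z_2$, via the embedding $O_\alpha/(I \cap J) \hookrightarrow (O_\alpha/I) \oplus (O_\alpha/J)$. For the reverse inclusion, however, the paper's proof works directly at the level of objects: given an arbitrary $K$ with $O_\alpha/K \in Z_1 \cup Z_2$, it realizes $O_\alpha/K$ as a subquotient $Q/P$ of some $M \oplus N$, lifts the quotient map $O_\alpha \to Q/P$ through $Q$ using projectivity of $O_\alpha$, and computes the kernel of the lift as an intersection of kernels of the two coordinate maps, landing in $\mc{F}^1_\alpha$ and $\mc{F}^2_\alpha$. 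You instead verify directly that $\{\mc{E}_\alpha\}$ is a filter system, invoke Theorem~\ref{thm:weakly-closed} to get a weakly closed subcategory $W$, and observe that $\mc{F}^i_\alpha \subseteq \mc{E}_\alpha$ forces $Z_1, Z_2 \subseteq W$, whence $Z_1 \cup Z_2 \subseteq W$ by minimality of the union. This packages the projectivity work into the already-proved bijection theorem rather than re-deriving it, at the modest cost of having to check the filter-system axioms for $\{\mc{E}_\alpha\}$ by hand (which the paper gets for free from the bijection). Your treatment of part (2) matches the paper's, which states that it ``follows easily from (1).'' The closing parenthetical remark about why the ideal axiom is automatic is a nice touch and correct.
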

\begin{proof}
(1) Let $\{ \mc{E}_{\alpha} \}$ be the filter system corresponding to $Z$.  For any generator $O_{\alpha}$ and $I \in \mc{F}^1_{\alpha}, J \in \mc{F}^2_{\alpha}$, then $O_{\alpha}/(I \cap J)$ embeds in $(O_{\alpha}/I) \oplus (O_{\alpha}/J) \in Z_1 \cup Z_2$, so $I \cap J \in \mc{E}_{\alpha}$ and consequently any $K$ with $I \cap J \subseteq K$ is in $\mc{E}_{\alpha}$.  

Conversely, suppose that $M \in Z_1, N \in Z_2$, and $P \subseteq Q \subseteq (M \oplus N)$, so that $Q/P$ represents an arbitrary element of $Z_1 \cup Z_2$.  Consider a morphism $f: O_{\alpha} \to Q/P$, so $\ker f$ is an arbitrary element of $\mc{E}_{\alpha}$.  Let $\rho: Q \to Q/P$ be the quotient map.  Since $O_{\alpha}$ is projective we can choose a morphism $\wt{f}: O_{\alpha} \to Q$ such that $\rho \wt{f} = f$.  Clearly $\ker \wt{f} = \ker \pi_1 \wt{f} \cap \ker \pi_2 \wt{f}$, where $\pi_1, \pi_2$ are the two projection maps of $M \oplus N$, restricted to $Q$.  Now $I = \ker \pi_1 \wt{f} \in \mc{F}^1_{\alpha}$ and $J = \ker \pi_2 \wt{f} \in \mc{F}^2_{\alpha}$, and $I \cap J = \ker \wt{f} \subseteq \ker f$.

(2) This follows easily from (1).
\end{proof}

\begin{proposition}
\label{prop:union}
Let $X$ be a Grothendieck category with localizing subcategory $Y$.  Let $\pi: X \to X/Y$ be the quotient map and let $Z_1$ and $Z_2$ be weakly closed in $X$.
\begin{enumerate}
\item $\pi(Z_1) \cup \pi(Z_2) = \pi(Z_1 \cup Z_2)$.  
\item If $Z_1$ and $Z_2$ are $Y$-torsionfree generated, then so is $Z_1 \cup Z_2$.
\item Suppose that $X$ is an (AB4*) category.  If $Z_1$ and $Z_2$ are closed in $X$, then so is $Z_1 \cup Z_2$.  If in addition $Z_1 \cup Z_2$ is $Y$-essentially stable in $X$, then $\pi(Z_1) \cup \pi(Z_2)$ is closed in $X/Y$. 
\end{enumerate}
\end{proposition}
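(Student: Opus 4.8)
The plan is to prove the three parts in turn, leaning throughout on the explicit description of the union recalled before Proposition~\ref{prop:union-gen}: $Z_1 \cup Z_2$ is the full subcategory of $X$ consisting of all subquotients of objects $M \oplus N$ with $M \in Z_1$ and $N \in Z_2$. For part~(1), the inclusion $\pi(Z_1) \cup \pi(Z_2) \subseteq \pi(Z_1 \cup Z_2)$ is automatic, since $Z_i \subseteq Z_1 \cup Z_2$ and $\pi(Z_1 \cup Z_2)$ is weakly closed by Proposition~\ref{prop:operations-on-Z}(1). For the reverse inclusion I would take $\mc{M} = \pi(M)$ with $M \in Z_1 \cup Z_2$, write $M$ as a subquotient of some $N_1 \oplus N_2$ with $N_i \in Z_i$, and apply $\pi$: since $\pi$ is exact and commutes with direct sums, $\mc{M}$ is a subquotient of $\pi(N_1) \oplus \pi(N_2)$, which puts $\mc{M}$ in the weakly closed subcategory $\pi(Z_1) \cup \pi(Z_2)$.

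For part~(2), let $W$ denote the weakly closed subcategory of $X$ generated by the $Y$-torsionfree objects lying in $Z_1 \cup Z_2$; I claim $W = Z_1 \cup Z_2$, for which only $Z_1 \cup Z_2 \subseteq W$ needs argument. Since $Z_1$ is $Y$-torsionfree generated, each $Y$-torsionfree object of $Z_1$ is in particular a $Y$-torsionfree object of $Z_1 \cup Z_2$, hence lies in $W$; as $W$ is weakly closed and contains every $Y$-torsionfree object of $Z_1$, while $Z_1$ is the weakly closed subcategory generated by its $Y$-torsionfree objects, we conclude $Z_1 \subseteq W$, and likewise $Z_2 \subseteq W$. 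Since $Z_1 \cup Z_2$ is the smallest weakly closed subcategory containing $Z_1$ and $Z_2$, it follows that $Z_1 \cup Z_2 \subseteq W$, so $Z_1 \cup Z_2 = W$ is $Y$-torsionfree generated.

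For part~(3), assume $X$ satisfies (AB4*). Given a small family $\{ P_\lambda \}$ in $Z_1 \cup Z_2$, for each $\lambda$ pick $A_\lambda \subseteq B_\lambda \subseteq M_\lambda \oplus N_\lambda$ with $P_\lambda \cong B_\lambda/A_\lambda$, $M_\lambda \in Z_1$, $N_\lambda \in Z_2$. By exactness of products, $\prod_\lambda P_\lambda \cong (\prod_\lambda B_\lambda)/(\prod_\lambda A_\lambda)$, while $\prod_\lambda B_\lambda$ is a subobject of $\prod_\lambda (M_\lambda \oplus N_\lambda) \cong (\prod_\lambda M_\lambda) \oplus (\prod_\lambda N_\lambda)$; since $Z_1$ and $Z_2$ are closed, $\prod_\lambda M_\lambda \in Z_1$ and $\prod_\lambda N_\lambda \in Z_2$, so $\prod_\lambda P_\lambda$ is a subquotient of an object of $Z_1 \cup Z_2$ and hence lies in $Z_1 \cup Z_2$. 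Thus $Z_1 \cup Z_2$ is closed. For the final claim, combine this with part~(1): $\pi(Z_1) \cup \pi(Z_2) = \pi(Z_1 \cup Z_2)$, and since $Z_1 \cup Z_2$ is now closed and assumed $Y$-essentially stable, Theorem~\ref{thm:quotientclosed}(1) gives that $\pi(Z_1 \cup Z_2)$ is closed in $X/Y$.

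The only step that requires real care is the use of (AB4*) in part~(3): without exactness of products one cannot deduce that $\prod_\lambda P_\lambda$ is a subquotient of $\prod_\lambda(M_\lambda \oplus N_\lambda)$ from the corresponding termwise statement — this is exactly the obstruction that Example~\ref{ex:weird} turns into a genuine counterexample elsewhere. Every other step is a formal manipulation with the explicit description of the union together with the elementary stability of $Y$-torsionfree objects under direct sums.
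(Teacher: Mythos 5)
Your proposal is correct, and it follows the same overall strategy as the paper, namely working with the explicit description of $Z_1 \cup Z_2$ as the subquotients of objects $M \oplus N$ with $M \in Z_1$, $N \in Z_2$. Parts (1) and (3) are in essence the paper's argument. There are two small, clean deviations worth noting. For the inclusion $\pi(Z_1)\cup\pi(Z_2)\subseteq\pi(Z_1\cup Z_2)$ in part (1), the paper unpacks both sides and invokes Kanda's subobject-lifting lemma to lift $\mc{P}\subseteq\mc{Q}\subseteq\mc{M}\oplus\mc{N}$ back to $X$; you instead observe that $\pi(Z_1\cup Z_2)$ is already weakly closed by Proposition~\ref{prop:operations-on-Z}(1) and contains both $\pi(Z_1)$ and $\pi(Z_2)$, so the inclusion is immediate from the universal property of $\cup$ --- slightly more economical, and it avoids the lifting result. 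For part (2), the paper constructs, for each subquotient $Q/P$ of $M\oplus N$, a $Y$-torsionfree object $Q'\subseteq M'\oplus N'$ in $Z_1\cup Z_2$ mapping onto $Q/P$; your argument is more lattice-theoretic: the weakly closed subcategory $W=(Z_1\cup Z_2)'$ contains the $Y$-torsionfree objects of $Z_1$ and of $Z_2$, hence (by the $Y$-torsionfree generated hypothesis) contains $Z_1$ and $Z_2$, hence contains $Z_1\cup Z_2$ by minimality of the union. Both are valid; yours is arguably the tidier route since it never opens up the explicit form of the union. Your closing remark on the role of (AB4*) and its connection to Example~\ref{ex:weird} is also accurate.
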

\begin{proof}
(1) Let $P \subseteq Q \subseteq (M \oplus N)$ where $M \in Z_1$ and $N \in Z_2$, so that $Q/P$ is an arbitrary object of $Z_1 \cup Z_2$.  By exactness of $\pi$, $\pi(Q/P) = \pi(Q)/\pi(P)$ is a subquotient of $\pi(M) \oplus \pi(N)$, so $\pi(Q/P)$ is in $\pi(Z_1) \cup \pi(Z_2)$ by definition.  On the other hand, suppose that $\mc{P} \subseteq \mc{Q} \subseteq \mc{M} \oplus \mc{N}$ where $\mc{M} \in \pi(Z_1)$ and $\mc{N} \in \pi(Z_2)$, so that $\mc{Q}/\mc{P}$ is an arbitrary object of $\pi(Z_1) \cup \pi(Z_2)$.  We can choose objects $M \in Z_1$ and $N \in Z_2$ so that $\pi(M) = \mc{M}$ and $\pi(N) = \mc{N}$.  Since $\pi(M \oplus N) = \mc{M} \oplus \mc{N}$, there exist subobjects $P \subseteq Q \subseteq (M \oplus N)$ such that $\pi(P) = \mc{P}$ and $\pi(Q) = \mc{Q}$ \cite[Proposition 4.10]{Kan15}.  Then $\pi(Q/P) \cong \mc{Q}/\mc{P}$ so $\mc{Q}/\mc{P} \in \pi(Z_1 \cup Z_2)$.

(2) Similarly as in (1), let $P \subseteq Q \subseteq (M \oplus N)$ where $M \in Z_1$ and $N \in Z_2$.  We can choose $Y$-torsionfree objects $M'$ and $N'$ and epimorphisms $\rho_1: M' \to M$, $\rho_2: N' \to N$, as $Z_1, Z_2$ are $Y$-torsionfree generated.  Then clearly there is  $Q' \subseteq (M' \oplus N')$ such that $\rho(Q') = Q$, where $\rho = \rho_1 \oplus \rho_2: M' \oplus N' \to M \oplus N$.  Now $Q'$ is $Y$-torsionfree, and there are epimorphisms $Q' \to Q \to Q/P$.  So every object in $Z_1 \cup Z_2$ is an epimorphic image of a $Y$-torsionfree object.

(3)  Take a small-indexed family of objects $\{ L_{\beta} \}$ such that  $L_{\beta} \cong Q_{\beta}/P_{\beta}$ with $P_{\beta} \subseteq Q_{\beta} \subseteq M_{\beta} \oplus N_{\beta}$, where $M_{\beta} \in Z_1$ and $N_{\beta} \in Z_2$.  Because products are exact in $X$, $\prod L_{\beta} \cong (\prod Q_{\beta})/(\prod P_{\beta})$ where 
$\prod P_{\beta} \subseteq \prod Q_{\beta} \subseteq \prod M_{\beta} \oplus \prod N_{\beta}$.  Since $Z_1$ and $Z_2$ are closed under products, we get that $\prod L_{\beta} \in Z_1 \cup Z_2$.  So $Z_1 \cup Z_2$ is closed.  

If $Z_1 \cup Z_2$ is $Y$-essentially stable, then $\pi(Z_1 \cup Z_2)$ is closed by Theorem~\ref{thm:quotientclosed}(1), 
and $\pi(Z_1\cup Z_2) = \pi(Z_1) \cup \pi(Z_2)$ by (1).  
\end{proof}

We have seen that intersections preserve the property of being $Y$-essentially stable, but not the $Y$-torsionfree generated property in general.  Conversely, finite unions preserve the $Y$-torsionfree generated property, but as we will see in the next example, not necessarily the $Y$-essentially stable property.  This has a more serious consequence, as it means that in a general Grothendieck category, the union of two closed subcategories may not be closed.

\begin{example}
\label{ex:badunion}
Consider the quantum polynomial ring $R = \kk\langle x_1, x_2, x_3, x_4 \rangle/(x_jx_i -p_{ij}x_ix_j \,|\, i < j)$, where we take $p_{12} = p, p_{13} = p^{-1}, p_{14} = p^{-1}, p_{23} = p_{24} = p_{34} = 1$ for some $p \neq 1 \in \kk$.  Consider the maximal ideal $I = (x_1-1, x_2, x_3, x_4)$ of $R$ and let $Y$ be the localizing subcategory of $X = \rcatMod R$ determined by $I$ as in Example~\ref{ex:idealpowers}.  Let $z_1 = x_2x_3$ and $z_2 = x_2x_4$, which commute with $x_1$ and so are central in $R$.  Let $Z_1 = \rcatMod R/(z_1)$ and $Z_2 = \rcatMod R/(z_2)$, which are closed subcategories of $X$.

For $i =1, 2$, the relation $I (z_iR) = z_iI$ shows that $z_iR \subseteq \wt{I z_i R}$ and thus 
$Z_i$ is $Y$-essentially stable in $X$, with $\pi(Z_i)$ closed in $X/Y$, according to the analysis in Example~\ref{ex:idealpowers}.  We know that $Z_1 \cup Z_2$ is closed in $X$ and defined by the ideal $z_1 R \cap z_2 R = x_2x_3 R \cap x_2x_4 R = x_2 x_3 x_4 R$,  by Proposition~\ref{prop:union-gen}.   The element $z_3 = x_2x_3 x_4$ is normal in $R$ but not central.  Specifically, let $\psi: R \to R$ be the automorphism given by 
$z_3 r = \psi(r) z_3$; then $\psi(x_1) = p^{-1}x_1$ and thus $\psi(I) = (p^{-1}x_1 -1, x_2, x_3, x_4)$ is comaximal with $I$.  Now Example~\ref{ex:pinotclosed} shows that $\pi(Z_1 \cup Z_2)$ is not closed in $X/Y$.  In particular, $Z_1 \cup Z_2$ is not $Y$-essentially stable.    On the other hand, $\pi(Z_1) \cup \pi(Z_2) = \pi(Z_1 \cup Z_2)$ by Proposition~\ref{prop:union}(1).  So the union of the two closed subcategories $\pi(Z_1)$ and $\pi(Z_2)$ of $X/Y$ is not closed in $X/Y$, only weakly closed.
\end{example}

Notice that the collection of all weakly closed subcategories of a Grothendieck category $X$ is a set.  This is because if $O$ is a generator of $X$, then any weakly closed subcategory $Z$ is determined by the set of subobjects $\mc{I}$ of $O$ such that $O/\mc{I} \in Z$.
Smith has asked if $Z_1 \cap (Z_2 \cup Z_3) = (Z_1 \cap Z_2) \cup (Z_1 \cap Z_3)$ for all weakly closed subcategories $Z_1, Z_2, Z_3$ in any Grothendieck category $X$ \cite[Remark 5, p. 2139]{Smi02}.   The weakly closed subcategories of $X$ form a lattice under $\vee = \cup$ and $\wedge = \cap$, and so an alternative way to phrase Smith's question is to ask whether that lattice is distributive.  In fact this is not true even in the setting of commutative affine schemes.
\begin{example}
\label{ex:notdist}
Let $R$ be a commutative ring and let $X = \rcatMod R$.  Let $Z_i = \rcatMod R/I_i$ for some ideals $I_i$ with $i = 1, 2, 3$, so the $Z_i$ are three arbitrary closed subcategories of $X$.  
Note that the collection $\on{Ideals}(R)$ of all ideals of $R$ is a lattice with $I \vee J = I + J$ and $I \wedge J = I \cap J$.  A ring $R$ is called \emph{arithmetical} if $\on{Ideals}(R)$ is a distributive lattice, and this is known to be a very stringent condition.  For example, a noetherian integral domain $R$ is arithmetical if and only if it is Dedekind.   

Thus there is a counterexample to Smith's conjecture already for $R = \kk[x,y]$, where $\kk$ is a field.  Explicitly, we can take $I_1 = (x)$, $I_2 = (y)$, $I_3 = (x+y)$.  Note that $(x) \cap ((y) + (x+y)) = (x) \cap (x,y) = (x)$ while $((x) \cap (y)) + ((x) \cap (x+y)) = (xy) + (x(x+y)) = (x)(x, y)$, so $\on{Ideals}(R)$ is not distributive. By Propositions~\ref{prop:int-gen}(2) and \ref{prop:union-gen}(2), the closed subcategories of $X$ form a lattice under $\vee = \cup$ and $\wedge = \cap$ which is dual to $\on{Ideals}(R)$.  Since distributivity is a self-dual property of a lattice, the lattice of closed subcategories of $X$ is not distributive, and therefore neither is the larger lattice of weakly closed subcategories.  
\end{example}

Of course the closed \emph{subsets} of a scheme under the Zariski topology do form a distributive lattice under intersection and union, as in any topological space.  Kanda has defined the notion of a reduced Grothendieck category \cite{Kan22}.  Perhaps one may define a notion of union and intersection for reduced weakly closed subcategories, such that distributivity holds.

To close the paper, we give a more detailed comparison of the results and notation of our paper with those of Smith in \cite{Smi02}.  In this paper we have treated the quotient category $X/Y$ as an algebraic object and sought to find a ``first isomorphism theorem" kind of relationship between the closed subcategories of $X$ and those of a quotient category $X/Y$.  

By contrast, Smith interprets the quotient categories $X/Y$ geometrically, as suggested by his word ``space" for a Grothendieck category.  Given a space $X$ with weakly closed subspace $Z$, Let $Y$ be the closure of $Z$ under extensions, which is a localizing subcategory.  Smith defines the \emph{complement} to $Z$ to be the space $X/Y$, which is called  \emph{weakly open} and denoted $X \backslash Z$.  More precisely, $X/Y$ is treated as a subcategory of $X$ via its image under the full and faithful functor $\omega: X/Y \to X$.  Of course this image $\omega(X/Y)$ is not generally an abelian subcategory of $X$.  Weakly open spaces can also be characterized functorially \cite[Definition 2.5]{Smi02}.  

The ``almost topology" of (weakly) open and closed subspaces, and their intersections and unions, is studied extensively in \cite[Section 6-7]{Smi02}.  Smith gives examples which show that the geometric intuition given by idea of open complement can be misleading.  These examples are generally related to the failure of the $Y$-essentially stable property, in our language.
For instance, Example~\ref{ex:noncommutative} is also given by Smith \cite[Example 6.11]{Smi02},
to show that for the weakly closed subspace $Z = Z_1 \cup Z_2$ and localizing subspace $Y = Z_1$, $(X \backslash Z_1) \cap Z = \emptyset$; that is, the complement of one point in a two-point space may not contain the other point.  Here, $X \backslash Z_1 = X/Y$ is embedded in $X$ as all direct sums of the non-simple indecomposable module $P_2$, which does not have any nonzero objects in common with the semisimple category $Z$.  It is exactly the 
fact that $Z$ is not $Y$-essentially stable that allows this to happen.

Although we did not make the connection until after our results were proved, Smith's \cite[Definition 7.8]{Smi02} and the results following it even anticipate the form of the bijective correspondence we give in Theorem~\ref{thm:quotientweaklyclosed} between weakly closed subcategories of $X/Y$ and $Y$-weakly closed subcategories of $X$.  In more detail, suppose that $W$ is weakly closed in $X$ and let $Y$ be the localizing subcategory it generates, so that explicitly 
$U = X \backslash W = \omega(X/Y) \subseteq X$ is weakly open.  Let $Z$ be weakly closed in $X$. Smith defines $U \cap Z$ to be the objectwise intersection of these subcategories of $X$ \cite[Definition 7.8]{Smi02}, but only under an additional condition that is equivalent to $Z$ being $Y$-essentially closed in our language.  He shows that every weakly closed subspace of $V$ of $U$ has the form $\overline{V} \cap U$ \cite[Proposition 7.10]{Smi02}, where $\overline{V}$ is a weakly closed subcategory of $X$ called the \emph{weak closure} of $V$. In our setup, given a weakly closed subcategory $\mc{V}$ of $X/Y$, Smith's weak closure can be seen to equal $(\pi^{-1}(\mc{V}))'$, that is, the $Y$-weakly closed subcategory of $X$ corresponding to $\mc{V}$ in Theorem~\ref{thm:quotientweaklyclosed}.

\end{document}